\newtheorem{theorem}{Theorem}[section]
\newtheorem{lemma}[theorem]{Lemma}
\theoremstyle{definition}
\newtheorem{problem}[theorem]{Problem}
\numberwithin{equation}{section}
\newcommand{\expm}{\exp^{\ast}}
\newcommand{\nconv}{^{\ast n}}
\newcommand{\N}{\mathbb{N}}
\newcommand{\Z}{\mathbb{Z}}
\newcommand{\C}{\mathbb{C}}
\renewcommand{\Re}{\operatorname{Re}}
\renewcommand{\Im}{\operatorname{Im}}
\newcommand{\I}{\mathrm{i}}
\newcommand{\e}{\mathrm{e}}
\newcommand{\eps}{\varepsilon}
\newcommand{\vphi}{\varphi}
\newcommand{\MP}{\mathcal{P}}
\newcommand{\LHS}{\text{LHS}}
\newcommand{\RHS}{\text{RHS}}
\newcommand{\GHL}{\Gamma_{\text{H-L}}}
\newcommand{\mmax}{m_{\text{max}}}
\DeclareMathOperator{\Li}{Li}
\DeclareMathOperator{\res}{Res}
\DeclareMathOperator{\sgn}{sgn}
\begin{document}

\title{Beurling integers with RH and large oscillation}

\author[F. Broucke]{Frederik Broucke}
\thanks{F. Broucke was supported by the Ghent University BOF-grant 01J04017}

\address{Department of Mathematics: Analysis, Logic and Discrete Mathematics\\ Ghent University\\ Krijgslaan 281\\ 9000 Gent\\ Belgium}
\email{fabrouck.broucke@ugent.be}
\email{gregory.debruyne@ugent.be}
\email{jasson.vindas@ugent.be}

\author[G.~Debruyne]{Gregory Debruyne}
\thanks{G.~Debruyne acknowledges support by Postdoctoral Research Fellowships of the Research Foundation--Flanders (grant number 3E006818) and the Belgian American Educational Foundation. The latter one allowed him to do part of this research at the University of Illinois at Urbana-Champaign.}

\author[J. Vindas]{Jasson Vindas}
\thanks {J. Vindas was partly supported by Ghent University through the BOF-grant 01J04017 and by the Research Foundation--Flanders through the FWO-grant 1510119N}

\subjclass[2020]{Primary 11M41, 11N80; Secondary 11M26, 11N05.}
\keywords{Generalized integers with Riemann hypothesis; generalized integers with large oscillation; Beurling generalized prime numbers; saddle-point method; Bohr's extremal example for convexity bound; Diamond-Montgomery-Vorhauer probabilistic method}

\begin{abstract}
We construct a Beurling generalized number system satisfying the Riemann hypothesis and whose integer counting function displays extremal oscillation in the following sense.  The prime counting function of this number system satisfies $\pi(x)= \Li (x)+ O(\sqrt{x})$, while its integer counting function satisfies the oscillation estimate $N(x) = \rho x + \Omega_{\pm}\bigl(x\exp(-c\sqrt{\log x\log\log x})\bigr)$ for some $c>0$, where $\rho>0$ is its asymptotic density. The construction is inspired by a classical example of H. Bohr for optimality of the convexity bound for Dirichlet series, and combines saddle-point analysis with the Diamond-Montgomery-Vorhauer probabilistic method via random prime number system approximations.
\end{abstract}

\maketitle

\section{Introduction}

In \cite{DiamondMontgomeryVorhauer}, H.~G.~Diamond, H.~L.~ Montgomery, and U.~M.~A.~Vorhauer constructed a Beurling generalized number system with very regularly distributed integers, but whose distribution of prime numbers has large oscillation. In fact, given $1/2<\theta<1$, they showed the existence of Beurling numbers with integer counting function $N$  satisfying the asymptotic estimate
\begin{equation}
\label{eq: N regular}
N(x)=\rho x + O(x^{\theta}),
\end{equation}
for some $\rho>0$, and whose primes realize the de la Vall\'{e}e Poussin error term in the Prime Number Theorem (PNT), namely, with prime counting function satisfying the oscillation estimate
\begin{equation*}
\pi(x)=\Li(x)  + \Omega_{\pm}\bigl(x\exp\bigl(-c\sqrt{\log x}\bigr)\bigr)
\end{equation*}
for some $c>0$, where $\Li(x)$ stands for the logarithmic integral and the notation $f(x)=\Omega_{\pm}(g(x))$ means that there is $c'>0$ such that the inequalities $f(x)>c'g(x)$ and $f(x)<-c'g(x)$ hold infinitely often for arbitrary large values of $x$. Their Beurling number system has the additional feature that its associated zeta function, defined as
\begin{equation}
\label{eq: def zeta}
\zeta(s)=\int_{1^{-}}^{\infty} x^{-s}\dif N(x)= \exp\left(\int_{1}^{\infty}x^{-s}\dif \Pi(x)\right)
\end{equation}
with $\Pi(x)= \sum_{j=1}^{\infty} j^{-1}\pi(x^{1/j})$, also realizes the classical de la Vall\'{e}e Poussin zero-free region; in particular, the Riemann hypothesis (RH) fails for it.

The existence of such a number system proves the optimality of E.~Landau's classical PNT \cite{Landau1903}, which, recasted in the language of Beurling numbers, states that \eqref{eq: N regular} always implies the validity of the PNT in the form
\begin{equation}
\label{eq: de la V-P error}
\pi(x)=\Li(x)  + O\bigl(x\exp\bigl(-c\sqrt{\log x}\bigr)\bigr),
\end{equation}
for some $c>0$. That Landau's theorem was optimal is part of a long-standing open problem, posed by P.~T.~Bateman and Diamond in \cite[13B, p.~199]{BD69}. The number system constructed by Diamond, Montgomery, and Vorhauer also provides the valuable information that a zeta function might not have a wider zero-free region than that of de la Vall\'{e}e Poussin if we only require that Beurling's property that the integers have multiplicative structure and \eqref{eq: N regular} hold. Several arguments from \cite{DiamondMontgomeryVorhauer} have been sharpened by W.-B.~Zhang in \cite{Zhang2007}. Interestingly, Zhang complemented these results by showing that there are also Beurling number systems for which, in contrast, the RH and the asymptotic estimate \eqref{eq: N regular} both hold. 

In this work we shall establish the existence of a counterpart of the result from \cite{DiamondMontgomeryVorhauer} for number systems whose integers display large oscillation in the sense of the following theorem, which is in fact our main result. The present article is fully devoted to its proof. Let us first recall that, following A.~Beurling \cite{Beurling1937} (see also \cite{BD69,DiamondZhangbook}), a set of generalized primes is simply an unbounded sequence of real numbers $\mathcal{P}:\:p_{1}\leq p_{2}\leq \dots$ subject to the only requirement $p_{1}>1$. Its associated set of generalized integers is the multiplicative semigroup generated by $\mathcal{P}$ and 1 (where multiplicities according to different representations as products of generalized primes are taken into account). The symbols  $N(x)$ and $\pi(x)$ already used above are the functions that count the number of generalized integers and primes, respectively, not exceeding a given number $x$. 
 \begin{theorem}
\label{th: discrete optimality HL}
There exists a Beurling generalized number system such that 
\begin{equation}
\label{eq: discrete prime approximation}
	\pi (x) = \Li(x) + O(\sqrt{x})
\end{equation}
and, for any constant $c>2\sqrt{2}$, 
\begin{equation}
\label{eq: discrete integer approximation}
 N(x) = \rho x + \Omega_{\pm}\bigl(x\exp(-c\sqrt{\log x\log\log x})\bigr),
\end{equation}
where $\rho>0$ is the asymptotic density of $N$.
\end{theorem}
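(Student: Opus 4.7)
The plan is to follow a continuous-then-discrete strategy akin to Diamond-Montgomery-Vorhauer, but with the oscillation engineered into the continuous integer counting function rather than into the primes. I would first construct a continuous auxiliary prime distribution $d\pi_0$ whose zeta function, defined as in \eqref{eq: def zeta}, satisfies
\[
\log \zeta_0(s) = -\log(s-1) + G(s),
\]
with $G$ analytic on $\Re s > 1/2$. The aim is to have $\pi_0(x) = \Li(x) + O(\sqrt{x})$ while the associated integer counting function $N_0$ already carries the desired large $\Omega_\pm$-oscillation.

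The key device is a Bohr-type design of $G$, modeled on Bohr's extremal example for the optimality of the convexity bound of Dirichlet series. Concretely, $G$ would be built as a Dirichlet-like sum with carefully tuned arithmetic phases (say of the flavour $\exp(\I\sqrt{\log n\log\log n})$) so that $|\zeta_0(1/2+\I t)|$ attains a magnitude $\exp\bigl(c_1\sqrt{\log|t|\log\log|t|}\bigr)$ on a suitable sequence of $t$. One must verify that the resulting $d\pi_0$ is a genuine nonnegative measure realizing $\pi_0(x) = \Li(x) + O(\sqrt{x})$; this realizability constraint bounds how aggressively $G$ is allowed to grow near the critical line.

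With $\zeta_0$ in hand, the continuous oscillation of $N_0$ would be extracted by shifting the contour in the Perron-type integral
\[
N_0(x) - \rho_0 x = \frac{1}{2\pi\I}\int_{(\kappa)} \zeta_0(s)\,\frac{x^s}{s}\,ds
\]
toward $\Re s = 1/2$ and locating the dominant saddle of $G(s) + s\log x$ on carefully chosen test sequences $x_k^\pm\to\infty$. This saddle-point analysis should give $N_0(x_k^\pm) - \rho_0 x_k^\pm \sim \pm C\,x_k^\pm\exp\bigl(-c_0\sqrt{\log x_k^\pm\log\log x_k^\pm}\bigr)$, with the threshold $c > 2\sqrt{2}$ of the theorem emerging from the optimization of the saddle parameters. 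Finally, an actual Beurling prime set would be produced via the Diamond-Montgomery-Vorhauer probabilistic method: sample $d\pi_0$ by independent Bernoulli trials of appropriate intensities. Standard sharp concentration then yields, almost surely, $\pi(x) = \pi_0(x) + O(\sqrt{x})$, hence \eqref{eq: discrete prime approximation}, while Perron applied to $\zeta - \zeta_0$ propagates the $O(\sqrt{x})$ prime error into an $N$-error strictly smaller than the continuous $\Omega_\pm$-scale, yielding \eqref{eq: discrete integer approximation}.

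The main obstacle, as I see it, is a three-way calibration. The Bohr perturbation $G$ must be strong enough to force saddle-point oscillations in $N_0$ at precisely the $2\sqrt{2}$ scale; yet tame enough that $d\pi_0$ remains a nonnegative measure within $O(\sqrt{x})$ of $\Li$; and simultaneously compatible with a random discretization whose deviation in $N-N_0$ lies well below the target oscillation. Balancing these three demands is the heart of the argument, and I would expect it to require both a delicate quantitative choice of $G$ and careful saddle-point bookkeeping near $\Re s = 1/2$, together with sharp large-deviation control refining the usual DMV concentration estimates so that the randomization error does not destroy the engineered oscillation.
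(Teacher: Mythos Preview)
Your overall strategy matches the paper's: a continuous Bohr-type prime measure whose integers oscillate at the target scale, analyzed via saddle-point on a Perron integral, then discretized by the DMV probabilistic scheme. There is, however, a genuine gap in your final step.

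You assert that after discretization, ``Perron applied to $\zeta - \zeta_0$ propagates the $O(\sqrt{x})$ prime error into an $N$-error strictly smaller than the continuous $\Omega_\pm$-scale.'' This does not follow. The DMV concentration bounds control $|\log\zeta - \log\zeta_0|$ only up to order $(\log\tau_k)^{1/4}$ near the relevant heights $t\approx\tau_k$ (and only $O(\sqrt{\log|t|})$ globally). The $\Omega_\pm$-contribution in the continuous model is a sum over many saddle points whose \emph{signs} were engineered (via arithmetic constraints on the sequences $\tau_k$, $x_k$) so that their imaginary parts all carry the sign $(-1)^{k+1}$ and hence reinforce rather than cancel. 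An uncontrolled perturbation of size $(\log\tau_k)^{1/4}$ in $\Im\log\zeta$ destroys this phase alignment: one no longer knows the sign of the saddle contribution, and the $\Omega_\pm$ conclusion is lost. The paper confronts exactly this obstacle and resolves it with a device you have not anticipated: after fixing a random realization, one adjoins finitely many copies of a single deterministic prime $p$, chosen via pigeonhole on the values $\Im(\log\zeta_0 - \log\zeta_C)(1+\I\tau_k)$, so as to force $d\bigl(\Im(\log\zeta - \log\zeta_C)(s),\, 2\pi\Z\bigr) < \pi/20$ on the steepest-descent paths for infinitely many even and infinitely many odd $k$. Without this extra step the discretization does not preserve the oscillation.

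Two secondary corrections. First, the saddle points and the working contour lie near $\sigma = 1 - \sqrt{2}\sqrt{\log\log x/\log x}$ (the Hilberdink--Lapidus contour), not near $\Re s = 1/2$; the extremal behaviour driving the error term is the convexity bound for $\log\zeta_C$ just left of $\sigma=1$, not the size of $\zeta_0$ on the critical line. Second, the paper works with the Perron formula for $\int_1^x N_C(u)\dif u$ rather than for $N_C$ itself, so that the contour integral converges absolutely; your unsmoothed formula would require additional justification.
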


Theorem \ref{th: discrete optimality HL} proves, apart from the numerical value of the constant $c$, the optimality of the ensuing result due to T.~W.~Hilberdink and M.~L.~Lapidus \cite{HilberdinkLapidus2006} (see also \cite{GarunkstisKaziulyte}), which,  in turn, might be regarded as the analog of Landau's result in the reverse direction, that is, for number systems with very regular prime counting function (namely, satisfying \eqref{eq: regular estimate pi}).

\begin{theorem}[{\cite{HilberdinkLapidus2006}}]
\label{th: HilberdinkLapidus} 
Suppose the generalized prime counting function satisfies 
\begin{equation}
\label{eq: regular estimate pi}
\pi(x) = \Li(x) + O(x^{\theta})
\end{equation}
 for some $0<\theta <1$. Then, there are constants $\rho>0$ and $c>0$ such that its associated generalized integer counting function satisfies  
\begin{equation}
\label{eq: N with exp error term}
	N(x) = \rho x+ O\bigl(x\exp\bigl(-c\sqrt{\log x\log\log x}\bigr)\bigr).
\end{equation}
\end{theorem}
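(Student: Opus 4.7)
To prove Theorem~\ref{th: HilberdinkLapidus} I would translate the hypothesis into analytic information about $\zeta(s)$, represent $N(x)-\rho x$ by a Perron-type contour integral, and then carry out a saddle-point optimisation.

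First, from $\pi(x)=\Li(x)+O(x^{\theta})$ one also has $\Pi(x)=\Li(x)+O(x^{\theta})$, since the higher-prime-power terms $k^{-1}\pi(x^{1/k})$ with $k\geq 2$ are of strictly lower order. Substituting into $\log\zeta(s)=\int_{1}^{\infty}x^{-s}\dif\Pi(x)$ and isolating the $\Li$-contribution (an $E_{1}$-type integral carrying a logarithmic singularity at $s=1$ and otherwise uniformly bounded on compacta of $\Re s>\theta$), one obtains
\[
	\log\zeta(s) = -\log(s-1) + g(s),
\]
with $g$ analytic on the half-plane $\Re s>\theta$. Hence $\zeta$ continues meromorphically past $\Re s=1$ with a unique simple pole at $s=1$ of positive residue $\rho=\e^{g(1)}$, and the integral representation furnishes quantitative upper bounds on $|\zeta(\sigma+\I t)|$ throughout the strip $\theta<\sigma<1$, deteriorating as $\sigma\to\theta^{+}$.

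Next, I would apply a truncated Perron formula to $N$ on the line $\Re s=1+1/\log x$ and deform the contour across the pole at $s=1$ onto the line $\Re s=1-\eta$ with $0<\eta<1-\theta$, giving
\[
	N(x) - \rho x = \frac{1}{2\pi\I}\int_{1-\eta-\I T}^{1-\eta+\I T}\zeta(s)\frac{x^{s}}{s}\dif s + (\text{horizontal and truncation errors}).
\]
Using a bound of the schematic form $|\zeta(1-\eta+\I t)|\leq \exp\bigl(O(\log(2+|t|)/\eta)\bigr)$ on this vertical segment, the integral is controlled by $x^{1-\eta}T^{C/\eta}$. Choosing $T$ as a suitable power of $\log x$ and optimising $\eta\sim\sqrt{\log\log x/\log x}$ yields the bound $x\exp\bigl(-c\sqrt{\log x\log\log x}\bigr)$; the Perron-truncation contributions and the jump terms coming from Beurling integers near $x$ are absorbed with the same choice of $T$.

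The main obstacle is the growth estimate for $\zeta$ invoked in the second step. The naive bound drawn from $|g(s)|=O(|s|/(\sigma-\theta))$ is exponential in $|t|$, which is far too weak for the saddle-point balance to succeed. A refinement to a log-polynomial bound in $|t|$ is required; this can be achieved by interpolating via Phragm\'en--Lindel\"of between the half-plane $\Re s\geq 1$, where $|\zeta|$ is bounded away from the pole, and a vertical line just inside $\Re s>\theta$, where a direct estimate can be pushed through, or by exploiting higher-order integration-by-parts identities that extract additional decay from the oscillating factor $x^{-\I t}$. Only with such a refined bound does the saddle-point optimisation produce the $\sqrt{\log x\log\log x}$ exponent asserted in the theorem.
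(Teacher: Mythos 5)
There is a genuine gap, and it sits exactly where you locate the ``main obstacle'', together with an inconsistent parameter choice. First, the truncation height: with a sharp truncated Perron formula cut at height $T=(\log x)^{O(1)}$, the truncation (and jump) error is at best of order $x\log x/T=x(\log x)^{O(1)-1}$ --- you have discarded all harmonics above $T$ --- and this is enormously larger than the target error $x\exp\bigl(-c\sqrt{\log x\log\log x}\bigr)$. To make the truncation error admissible you are forced to take $T\ge\exp\bigl(c\sqrt{\log x\log\log x}\bigr)$ (this is also why the present paper integrates the absolutely convergent Perron formula for $\int_1^x N_C(u)\dif u$ along the full contour rather than truncating $N$ itself). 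Second, once $T$ is that large, the growth estimate you invoke, $|\zeta(1-\eta+\I t)|\le\exp\bigl(O(\log(2+|t|)/\eta)\bigr)$, is not a consequence of the hypothesis and is false in general: Phragm\'en--Lindel\"of between $\sigma\ge 1$ and a line just to the right of $\sigma=\theta$ only yields the convexity bound $\log|\zeta(\sigma+\I t)|\ll |t|^{(1-\sigma)/(1-\theta)+\eps}$, which is polynomial (not log-polynomial) in $|t|$, and integration by parts in $\int_1^\infty u^{-s}\dif\Pi(u)$ extracts no decay in $t$ because $\dif\Pi$ has no smoothness (for a discrete system it is purely atomic). Indeed, the very construction of this paper shows that the convexity bound is attained: $\log\zeta_C(\sigma+\I\tau_k)\gg\tau_k^{1-(1+\delta_k)\sigma}$, so on a line $\sigma=1-\eta$ one has $\log|\zeta_C(1-\eta+\I\tau_k)|\gg\tau_k^{\eta-\delta_k}$, a positive power of $\tau_k$, while $\Pi_C$ satisfies the hypothesis with error $O(1)$; the discretized system of Theorem \ref{th: discrete optimality HL} inherits the same size by Lemma \ref{lem: bounds zeta/zeta_{C}}. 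So no bound of the strength you assume can hold uniformly up to the heights $T$ that the truncation forces on you without a careful balancing of exponents.

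This tension is precisely what the actual proofs are designed to resolve, and your sketch does not resolve it. Hilberdink and Lapidus integrate along the curved contour $\sigma=1-\log\log t/\log t$, on which the convexity bound for $\log\zeta$ (not a log-polynomial bound on a straight line) becomes small enough to balance against $x^{\sigma}$ and produce the exponent $\sqrt{\log x\log\log x}$; a straight line $\sigma=1-\eta$ with $\eta\asymp\sqrt{\log\log x/\log x}$ can be made to work, but only with $T=\exp\bigl(K\sqrt{\log x\log\log x}\bigr)$ and an explicit tracking of the convexity-bound constants, neither of which appears in your argument. The paper itself, by contrast, does not reprove the theorem analytically at all: it quotes it, and in Appendix \ref{appendix} proves the sharper Theorem \ref{th: ImprovedHL} (any $c<\sqrt{2(1-\theta)}$) by a purely elementary route --- Balazard's version of the Dirichlet hyperbola method applied to the $n$-fold convolutions $\int_1^x(\dif E(u)/u)^{\ast n}$ with $\dif E=\dif\Pi-\dif P$, followed by optimization over $n\asymp\sqrt{\log x/\log\log x}$ --- which avoids zeta-function growth estimates altogether. (A minor further point: for $\theta<1/2$ the passage from $\pi$ to $\Pi$ costs an error of order $\sqrt{x}\,$, so one should work with $\max(\theta,1/2)$ rather than claim the prime-power terms are of strictly lower order; this is harmless but should be said.)
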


Theorem \ref{th: HilberdinkLapidus} is an improvement to a result of P.~Malliavin \cite{Malliavin}, who obtained the weaker error term $O\bigl(x\exp\bigl(-c\sqrt[3]{\log x}\bigr)\bigr)$ in \eqref{eq: N with exp error term} also under the hypothesis \eqref{eq: regular estimate pi}. Hilberdink and Lapidus have actually proved that \eqref{eq: N with exp error term} holds with the constant $c = \min\{\sqrt{1-\theta}/4, \sqrt{2}/8\}$. We mention that, using a variant of M.~Balazard's technique from \cite{Balazard1999}, one might improve this value to any $c<\min\{\sqrt{2(1-\theta)}, 1\}$. We include details about  how to obtain this improvement in Appendix \ref{appendix}.

Note that \eqref{eq: discrete prime approximation} implies the Beurling number system satisfies the RH, that is, its zeta function 
analytically extends to $\Re s> 1/2$, except for a simple pole located at $s=1$, and has no zeros in this half-plane. In this regard, it is worthwhile to compare our generalized number system from Theorem \ref{th: discrete optimality HL} with earlier examples by E.~Balanzario \cite{Balanzario1998} and Zhang \cite{Zhang2007}. On the one hand, in Balanzario's example\footnote{Balanzario constructs a `continuous' example in \cite{Balanzario1998}, but F.~A.~Al-Maamori  has recently shown \cite{Al-Maamori} via probabilistic arguments  that the example of Balanzario can be discretized.} the oscillation estimate \eqref{eq: discrete integer approximation} holds for $N$, but $\pi$ only satisfies the weaker asymptotic relation \eqref{eq: de la V-P error}. On the other hand, Zhang's example quoted above has generalized prime counting function satisfying \eqref{eq: discrete prime approximation}  (hence the RH holds here), but its generalized integer counting function is too regular for our purposes, i.e., \eqref{eq: N regular} holds for it. Our Beurling number system enjoys the most extremal features of Balanzario's and Zhang's instances and, in turn, neither of their constructions is able to simultaneously deliver  \eqref{eq: discrete prime approximation} and \eqref{eq: discrete integer approximation}. 

Our method for showing Theorem \ref{th: discrete optimality HL} is first to construct a \emph{continuous} analog of a number system having the desired properties and then to find a suitable discrete approximation to it, yielding the sought discrete Beurling number system. As in classical number theory, the key property linking $N$ and $\Pi$ for Beurling numbers is the zeta function identity \eqref{eq: def zeta}, or equivalently, the relation 
$\dif N=\exp^{\ast}\left(\dif \Pi\right)$, where the exponential is taken with respect to the multiplicative convolution of measures \cite{Diamond1970, DiamondZhangbook}. The latter exponential identity then makes sense for not necessarily atomic measures $\dif \Pi$ and $\dif N$ (hereafter supported in $[1,\infty)$ and non-negative), giving rise to the possibility to consider `continuous number systems'. The idea of using such continuous analogs to show optimality of results of course goes back to Beurling's seminal paper \cite{Beurling1937}, and has extensively been exploited by several authors since then\footnote{A large number of instances has been collected in the monograph \cite{DiamondZhangbook}; see also the recent work \cite{D-S-V2016}.}. We shall show the next result.

\begin{theorem}
\label{th: optimality HL}
There exists an absolutely continuous prime distribution function $\Pi_{C}$ with associated integer distribution function $N_{C}$ (determined by $\dif N_{C} = \exp^{\ast}(\dif\Pi_{C})$) such that
\begin{equation}
\label{eq: asymp behavior number system 1} 
	\Pi_{C}(x) = \int_{1}^{x} \frac{1-u^{-1}}{\log u}\dif u +O(1)
		\end{equation}
	and, for  any constant $c>2\sqrt{2}$,
\begin{equation}
\label{eq: asymp behavior number system 2} 
N_{C}(x) = \rho x + \Omega_{\pm}\bigl(x\exp\bigl(-c\sqrt{\log x\log\log x}\bigr)\bigr),
\end{equation}
where $\rho>0$ is the asymptotic density of $N_{C}$.
\end{theorem}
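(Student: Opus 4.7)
The plan is to write $\Pi_C = \Pi_0 + H$, where $\Pi_0(x) = \int_1^x (1-u^{-1})/\log u\, \dif u$ is the smooth prime density and $H$ is an absolutely continuous, uniformly bounded function with $H(1)=0$. The bound $H \in L^\infty$ is exactly the content of \eqref{eq: asymp behavior number system 1}. A direct Frullani-type computation gives $\int_{1}^{\infty} x^{-s}\dif \Pi_0(x) = \log\frac{s}{s-1}$, so the zeta function factorises cleanly as $\zeta_C(s) = \frac{s}{s-1}\exp(F(s))$, where $F(s) = \int_1^\infty x^{-s}\dif H(x) = -s\int_1^\infty H(x)x^{-s-1}\dif x$ is holomorphic in $\Re s > 0$, and the resulting asymptotic density is $\rho = e^{F(1)}$. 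The whole task is thus to design a bounded $H$ whose transform $F$ produces the oscillation \eqref{eq: asymp behavior number system 2} in $N_C$.

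The choice of $H$ is modelled on H.~Bohr's extremal example for the convexity bound, transposed to the prime-counting setting. Qualitatively, I would take $\dif H(x) = x^{-1}k(\log x)\,\dif x$ with
\[
 k(u) = \sum_{n\ge 1} a_n\, \psi_n(u)\cos(t_n u + \varphi_n),
\]
a superposition of well-separated oscillatory blocks in which the windows $\operatorname{supp}\psi_n = [u_n,\,2u_n]$ are disjoint and grow, the frequencies $t_n$ tend rapidly to infinity, and the amplitudes $a_n$ are chosen so that $\sum_n |a_n|\,\|\psi_n\|_{\infty} < \infty$, securing $\|H\|_\infty < \infty$. The parameters are tuned so that, on the one hand, the contribution of each block to $F(s)$ is negligible on $\Re s = 1$, while on the other hand, at matched scales $s_n = \sigma_n + it_n$ with $1-\sigma_n \asymp \sqrt{\log\log x_n/\log x_n}$, the $n$th block creates a Bohr-type peak with $\Re F(s_n) \asymp a_n u_n$, as large as the $L^\infty$ budget on $H$ permits. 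The phases $\varphi_n$ are cycled between $0$ and $\pi$ so that peaks of both signs arise.

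With $H$ fixed, I would invert Mellin, $N_C(x) = (2\pi i)^{-1}\int_{(2)} \zeta_C(s)\,x^{s}/s\,\dif s$, peel off the residue $\rho x$ from the pole at $s=1$, and control the remainder by shifting the contour into the critical strip along a Hankel-type path through the saddle point $s_n$ associated to the scale $x = x_n$. Optimising the real part of $(s-1)\log x + F(s)$ in the vertical direction gives a saddle at $1-\sigma_n \sim \sqrt{\kappa_n/\log x_n}$ with $\kappa_n \asymp \log\log x_n$, producing a contribution of order $x \exp(-2\sqrt{\kappa_n \log x})$ times polynomial corrections; when the Bohr construction is tuned so that $\kappa_n$ reaches $2\log\log x_n$, the resulting oscillation is precisely $x\exp(-2\sqrt{2\log x\log\log x})$ in absolute value. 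Alternating signs via the $\varphi_n$ and routine tail estimates for the rest of the contour then yield the $\Omega_\pm$ conclusion with any constant $c > 2\sqrt{2}$.

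The principal obstacle will be the quantitative fine-tuning behind the Bohr construction. Each peak of $F$ of height $h$ requires an oscillation of amplitude $\asymp h/u_n$ on a window of length $u_n$ in $k$, so taller peaks consume more of the $L^\infty$ budget of $H$; the scaling $u_n \asymp \log t_n/(1-\sigma_n)$ must then be matched to $\log x_n$ with just the right constant for the saddle to reach $\kappa_n = 2\log\log x_n$. The appearance of $2\sqrt{2}$ as the threshold in \eqref{eq: asymp behavior number system 2} reflects exactly this saturated balance, and is in reassuring quantitative agreement with the one-sided upper bound $c = \sqrt{2(1-\theta)} = 1$ at $\theta = 1/2$ furnished by the refinement of Hilberdink--Lapidus recorded in Appendix~\ref{appendix}.
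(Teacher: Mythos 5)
Your skeleton matches the paper's: perturb $\dif P=(1-u^{-1})/\log u\,\dif u$ by Bohr-type oscillatory blocks so that $\log\zeta_{C}$ attains the convexity bound, then extract the oscillation of $N_{C}$ from a saddle point on the Hilberdink--Lapidus contour at specially chosen scales $x_{k}$. However, the way you budget the perturbation breaks the construction. You require $\sum_{n}|a_{n}|\|\psi_{n}\|_{\infty}<\infty$, i.e.\ boundedness of $H$ through smallness of the amplitudes. Writing $F(s)=\int_{0}^{\infty}k(u)\e^{-su}\dif u$, on the window $[u_{n},2u_{n}]$ the factor $\e^{-\sigma u}\le\e^{-\sigma u_{n}}$ then crushes each block, so $|F(\sigma+\I t)|\le\sum_{n}|a_{n}|u_{n}\e^{-\sigma u_{n}}$ is \emph{uniformly bounded} on every half-plane $\sigma\ge\eps>0$ (your claim $\Re F(s_{n})\asymp a_{n}u_{n}$ overlooks this damping). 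A bounded $F$ is fatal: then $\zeta_{C}(s)=\frac{s}{s-1}\e^{F(s)}\ll |s|/|s-1|$ on $\sigma=1/2+\eps$, Perron for the primitive gives $\int_{1}^{x}N_{C}(u)\dif u=\tfrac{\rho}{2}x^{2}+O(x^{3/2+\eps})$, and monotonicity yields $N_{C}(x)=\rho x+O(x^{3/4+\eps})$, contradicting \eqref{eq: asymp behavior number system 2}. Indeed the saddle-point equation \eqref{eq: saddle point identity} forces the block term to equal $\log x/((1+\delta)\log\tau)+O(1)\asymp\sqrt{\log x/\log\log x}$ at the saddle, so $F$ must be \emph{unbounded} near $\sigma_{0}=1-\sqrt{2}\sqrt{\log\log x/\log x}$. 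The paper achieves this by taking the density perturbation with amplitude comparable to the frequency, $\tau_{k}\cos(\tau_{k}\log x)x^{-1}\dif x$ on $\tau_{k}^{1+\delta_{k}}<x\le\tau_{k}^{\nu_{k}}$, so that $R_{k}(x)=\sin(\tau_{k}\log x)$ stays bounded by cancellation rather than by small amplitude, with $\delta_{k}=(\log\log\tau_{k}+a_{k})/\log\tau_{k}$ calibrated both to keep $\Pi_{C}$ non-decreasing and to hit the convexity bound; your ansatz cannot be repaired without adopting this amplitude--frequency scaling.

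A second, independent gap is the sign control behind $\Omega_{\pm}$. After exponentiation, the near-periodicity of $\tau^{-(1+\delta)s}$ in $t$ produces not one but a whole array of saddle points $s_{m}$, spaced $2\pi/((1+\delta)\log\tau)$ apart near $t=\tau_{k}$, whose contributions decay slowly in $m$ and could cancel the central one if their phases were arbitrary. Your plan of cycling $\varphi_{n}$ between $0$ and $\pi$ plus ``routine tail estimates'' does not address this: the paper needs the Diophantine conditions \ref{Property (b)}--\ref{Property (d)} on $\tau_{k},\delta_{k},\nu_{k},x_{k}$ (with Lemma \ref{lem: technical properties sequences} proving they are simultaneously satisfiable), the existence of steepest-descent paths with tangent direction within $\pi/5$ of vertical, and the estimate showing $\Im f(s_{m})$ lies within $\pi/8$ of the correct residue class uniformly for $|m|\le c(\log x)^{1/3}(\log\log x)^{2/3}$, before Lemma \ref{lem: estimation integral} can force all contributions to share the sign $(-1)^{k+1}$. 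This phase alignment, tied to the choice of $x_{k}$ through \eqref{eq: xk} and property \ref{Property (c)}, is the heart of the $\Omega_{\pm}$ statement and is far from routine; the connecting contours and the return to $\GHL$ also require the explicit bounds of Lemma \ref{lem: bound zeta on GHL} rather than generic tail estimates.
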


It is worth pointing out that Theorem \ref{th: optimality HL} explains the fundamental difference between Zhang's work \cite{Zhang2007} and ours. Zhang's example of a Beurling number system satisfying the RH can basically be considered as a discretization of the absolutely continuous prime distribution $P(x) =\int_{1}^{x} (1-u^{-1})/\log u \dif u$, which is also the starting point of numerous constructions in the theory of Beurling primes due to fact that it leads to a highly regular continuous number system \cite{DiamondZhangbook}. Indeed, the exponential of $\dif P$ is $\exp^{\ast}\left(\dif P \right)= \delta_1+\dif x$, with $\delta_1$ the Dirac delta concentrated at $1$ and $\dif x$ the Lebesgue measure, so that its associated integer distribution function is simply the function $x$ on $[1,\infty)$, while its associated zeta function is the meromorphic function $s/(s-1)$. In order to demonstrate Theorem \ref{th: optimality HL}, one needs to show the existence of a perturbation of $\dif P$ whose exponential displays the extremal behavior \eqref{eq: asymp behavior number system 2}. It is quite remarkable that such a small perturbation in the  primes can reinforce itself in such a way, analogous to the butterfly effect, to produce such a big discrepancy in the integers. Nevertheless, it cannot fully destroy the integer law.
 
The construction and analysis of a continuous example possessing the properties stated in Theorem \ref{th: optimality HL} that we give is quite involved, and is the subject of Sections \ref{sec: Setup and overview} through \ref{section: conclusion proof lemma}. We refer to Section \ref{sec: Setup and overview} for a sketch of the proof of Theorem \ref{th: optimality HL}, and some insights into the motivation for our considerations.
 
We confine ourselves here to mention that the starting template for our example is an old construction from H.~Bohr's thesis \cite{Bohr1910}, which we shall manipulate to achieve the desired properties. The estimate \eqref{eq: asymp behavior number system 1} will automatically be  satisfied by construction; the challenging part is to match it with the oscillation estimate  \eqref{eq: asymp behavior number system 2}, which we shall actually deduce from a certain extremal behavior of the associated zeta function that we will generate in our construction. As a matter of fact, most of our work in the subsequent sections is a detailed saddle-point analysis of this zeta function. After establishing that our continuous example satisfies all requirements from Theorem \ref{th: optimality HL}, we proceed to carry out a discretization procedure for it in Section \ref{section discretization}. This procedure will be accomplished by adapting to our problem the Diamond-Montgomery-Vorhauer probabilistic scheme \cite{DiamondMontgomeryVorhauer} based upon approximations by random Beurling primes. Our adaptation of this scheme delivers sufficiently strong bounds for the modulus of the relevant zeta functions; however, for our application, we also need to keep good control on the argument of the randomly found zeta function, for which the direct bounds from the Diamond-Montgomery-Vorhauer method appear to be insufficient. We will resolve this issue with a new idea of adding finitely many well-chosen primes to our number system.

We end this introduction by placing Theorem \ref{th: discrete optimality HL} in the context of a long-standing open problem. In fact, the Beurling number system we exhibit in this article and the Diamond-Montgomery-Vorhauer example from \cite{DiamondMontgomeryVorhauer} are just two pieces of a fascinating unsolved puzzle essentially raised by Bateman and Diamond in \cite[13B, p. 199]{BD69} and having its roots in the work of Malliavin. 

In \cite{Malliavin}, Malliavin discovered that the two asymptotic relations 
\begin{equation}
\label{eq: Malliavin PNT}
\tag{P$_{\alpha}$}
\pi(x)= \operatorname*{Li}(x)+ O(x\exp (-c \log^{\alpha} x ))
\end{equation}
and
\begin{equation}
\label{eq: Malliavin density}
\tag{N$_{\beta}$}
N(x)= \rho x+ O(x\exp (-c' \log^{\beta} x )) \qquad (\rho>0),
\end{equation}
for some $c>0$ and $c'>0$, are closely related to each other in the sense that if  \eqref{eq: Malliavin density} holds for a given $0<\beta\leq1$, then (P$_{\alpha^{\ast}}$) is satisfied for a certain $\alpha^\ast$, and vice versa the relation \eqref{eq: Malliavin PNT} for a given $0<\alpha\leq1$ implies that (N$_{\beta^{\ast}}$) holds for a certain $\beta^{\ast}$. Writing $\alpha^{\ast}(\beta)$ and $\beta^{\ast}(\alpha)$ for the best possible\footnote{To be precise, the suprema over all admissible values $\alpha^{\ast}$ and $\beta^{\ast}$ in these implications, respectively.} exponents in these implications, we have:
 
 \begin{problem}\label{Malliavin open problem} Given any $\alpha,\beta\in (0,1]$, find the best exponents $\alpha^{\ast}(\beta)$ and $\beta^{\ast}(\alpha)$. \end{problem}

Theorem \ref{th: discrete optimality HL} and Theorem \ref{th: HilberdinkLapidus} together then yield $\beta^{\ast}(1)=1/2$, while the work of Diamond, Montgomery, and Vorhauer in combination with Landau's PNT gives $\alpha^{\ast}(1)=1/2$. These are the only   two cases where a solution to Problem \ref{Malliavin open problem} is known, and for the remaining values $0<\alpha<1$ and $0<\beta<1$ the question remains wide open. It has been conjectured by Bateman and Diamond that $\alpha^{\ast}(\beta)=\beta/(1+\beta)$. However, the best known admissible value \cite[Theorem~16.8, p.~187]{DiamondZhangbook} when $0<\beta<1$ is $\alpha^{\ast}\approx \beta/(6.91+\beta)$; this falls far short of the conjectural exponent. It is also believed that $\beta^{\ast}(\alpha)=\alpha/(\alpha+1)$ for each $0<\alpha<1$, which is suggested by the work of Diamond, who showed in \cite{Diamond1970} that the hypothesis \eqref{eq: Malliavin PNT} actually ensures a slightly better asymptotic estimate than (N$_{\alpha/(\alpha+1)})$, namely,
$$
N(x)= \rho x+ O(x\exp (-c' (\log x \log \log x )^{\frac{\alpha}{\alpha+1}})
$$
for some $\rho,c'>0$. Al-Maamori \cite{Al-Maamori} has recently found an upper bound, providing $\alpha/(\alpha+1)\leq \beta^{\ast}(\alpha)\leq \alpha$ when $0<\alpha<1$, but we strongly believe that there is still room for improvement here.  In fact, it is worth noting that Theorem \ref{th: discrete optimality HL} leads to $\beta^{\ast}(\alpha)\leq 1/2$, which is better than Al-Maamori's upper bound in the range $1/2< \alpha< 1$.

The authors thank Harold G. Diamond for his useful comments and remarks.
\section{Setup and overview of construction for the continuous example}
\label{sec: Setup and overview}
Before introducing our continuous analog of a Beurling number system that will satisfy the properties stated in Theorem \ref{th: optimality HL}, let us first sketch the motivation for its definition. 

In the complex analysis proof of Theorem \ref{th: HilberdinkLapidus} from \cite{HilberdinkLapidus2006}, the error term in \eqref{eq: N with exp error term} comes from an integral of the zeta function over the contour given by
\[
	s(t) = 1 - \frac{\log\log \abs{t}}{\log \abs{t}} + \I t, \quad \abs{t}\ge3.
\]
In order to generate an example for which this error term is reached, one might thus attempt to find a zeta function having certain extremal growth properties along this contour. Taking a closer look at the proof of Theorem \ref{th: HilberdinkLapidus}, one sees that the bound obtained\footnote{In \cite{HilberdinkLapidus2006}, they actually prove the convexity bound for $-\zeta'/\zeta$, the Mellin-Stieltjes transform of $\psi(x)=\int_{1}^{x}\log u \dif \Pi(u)$, and then derive a bound for $\log\zeta$ on the given contour via integration. One can however start from $\Pi$ instead of $\psi$ and  directly prove the convexity bound for $\log\zeta$.}  in \cite{HilberdinkLapidus2006} for $\log \zeta$ is essentially the convexity bound (cf. \cite[Theorem 1.19 and Theorem 1.20, pp.~201--202]{Tenenbaumbook}) for the Dirichlet series of $\log \zeta$. 
The core of our idea is to construct a prime counting function whose Mellin-Stieltjes transform attains this convexity bound. The inspiration for our construction goes back to Bohr, who showed in his thesis \cite{Bohr1910} via an ingenious example that the convexity bound for Dirichlet series is basically optimal. The example we shall now study is in fact a subtle variant of Bohr's example, modified in such a way that it indeed gives rise to an absolutely continuous prime distribution function having the desired properties to deliver a proof of Theorem \ref{th: optimality HL}.

Let us set up our construction. We consider a positive sequence $(\tau_{k})_{k}$ with $\tau_{0}\ge3$ and rapidly increasing to $\infty$, a positive sequence $(\delta_{k})_{k}$ tending to 0, and a sequence $(\nu_{k})_{k}$ that takes values between 2 and 3. For $x>1$, we then define
\[
	\Pi_{C}(x) \coloneqq \int_{1}^{x}\frac{1-1/u}{\log u}\dif u + \sum_{k=0}^{\infty}R_{k}(x), \quad \text{with} \quad 
	R_{k}(x) \coloneqq 
		\begin{cases}
		\sin(\tau_{k}\log x)	&\mbox{for } \tau_{k}^{1+\delta_{k}} < x \leq \tau_{k}^{\nu_{k}},\\
		0				&\mbox{otherwise}.
		\end{cases}
\]

We make a choice for the sequences $(\tau_{k})_{k}, (\delta_{k})_{k}, (\nu_{k})_{k}$ such that the following (technical) properties (whose relevance will become clear in later stages of our analysis) hold. First, we set 
\[
	\delta_{k} \coloneqq \frac{\log\log \tau_{k} + a_{k}}{\log \tau_{k}},
\]
for a sequence $(a_{k})_{k}$ taking values between $\log 6$ and $\log 6+1$ say, and a sequence $(\tau_k)_k$ to be defined below, and we define a sequence $(x_{k})_{k}$ via 
\begin{equation}
\label{eq: xk}
	\log\tau_{k} = \sqrt{\frac{\log x_{k}\log\log x_{k}}{2}}.
\end{equation}
The definition of the sequence $(x_{k})_{k}$ is of course reminiscent of the error term in \eqref{eq: N with exp error term}, and it is in fact on this sequence that the average $\int_{1}^{x}N_{C}(u)\dif u$ will display a desired deviation from the main term $\rho x^{2}/2$  as explained below (see \eqref{eq: omega for int N}). We also mention that $\tau_0$ will be assumed to be sufficiently large as needed in some of our future arguments.
Then, we require the following properties:
\begin{enumerate}[label = (\alph*)]
	\item $\tau_{k+1} > (2\tau_{k})^{5}$; \label{Property (a)}
	\item $(1+\delta_{k})\tau_{k}\log\tau_{k}\in 2\pi\Z$ and $\nu_{k}\tau_{k}\log\tau_{k} \in 2\pi\Z$; \label{Property (b)}
	\item $\tau_{k}\log x_{k} \in 2\pi\Z$ if $k$ is even while $\tau_{k}\log x_{k}\in \pi+2\pi\Z$ when $k$ is odd; \label{Property (c)}
	\item \[
		d\biggl(\frac{\log x_{k}}{(1+\delta_{k})\log\tau_{k}}\biggl(1-\frac{1+\sqrt{2}\sqrt{\log\log x_{k}/\log x_{k}}}{(1+\delta_{k})\log \tau_{k}}\biggr), \Z\biggr) < \frac{1/32}{(\log\tau_{k})^{3/4}}.
		\] \label{Property (d)}
\end{enumerate}
Here $d(\: \cdot\: , \Z)$ denotes the distance to the nearest integer. The existence of such sequences is stated in the ensuing lemma, whose proof will be postponed to Section \ref{section: conclusion proof lemma}.

\begin{lemma}
\label{lem: technical properties sequences}
There exist sequences $(\tau_{k})_{k}, (a_{k})_{k}, (\nu_{k})_{k}$ such that, with the above definitions of $(\delta_{k})_{k}$ and $(x_{k})_{k}$, the properties \ref{Property (a)}-\ref{Property (d)} are satisfied.
\end{lemma}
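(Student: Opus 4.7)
My plan is to build the sequences inductively: having chosen $\tau_0,a_0,\nu_0,\ldots,\tau_{k-1},a_{k-1},\nu_{k-1}$ with all relevant properties, I select $\tau_{k}$, $a_{k}$, $\nu_{k}$ (in that order) so that properties (a)--(d) with index $k$ hold.

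The key analytic input is a scale separation between the three quantities involved in (b), (c), and (d). Introduce
\[
	F(\tau,a) \coloneqq \frac{\log x(\tau)}{(1+\delta(\tau,a))\log\tau}\left(1 - \frac{1+\sqrt{2}\sqrt{\log\log x(\tau)/\log x(\tau)}}{(1+\delta(\tau,a))\log\tau}\right),
\]
\[
	g_{2}(\tau) \coloneqq \tau\log x(\tau),\qquad
	H(\tau,a) \coloneqq (1+\delta(\tau,a))\tau\log\tau,
\]
where $x(\tau)$ is determined by $\log x\log\log x = 2(\log\tau)^{2}$ and $\delta(\tau,a)=(\log\log\tau+a)/\log\tau$. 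A routine implicit differentiation yields
\[
	g_{2}'(\tau)\asymp\frac{(\log\tau)^{2}}{\log\log\tau},\quad
	\frac{\partial F}{\partial\tau}\asymp\frac{1}{\tau\log\log\tau},\quad
	\left|\frac{\partial F}{\partial a}\right|\asymp\frac{1}{\log\log\tau},\quad
	\frac{\partial H}{\partial a}=\tau.
\]
Thus $F$ varies on a far slower scale than $g_{2}$ and $H$, which will allow the three types of constraints to be imposed almost independently.

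At step $k$, I first choose $T^{\ast}>(2\tau_{k-1})^{5}$ and set $a^{\ast}\coloneqq\log 6 + 1/2$ so that $F(T^{\ast},a^{\ast})\in\Z$; this is possible because $T\mapsto F(T,a^{\ast})$ is continuous and tends to $\infty$, hence attains every sufficiently large integer. I then define
\[
	\Delta_{\tau} \coloneqq \frac{T^{\ast}\log\log T^{\ast}}{200(\log T^{\ast})^{3/4}},\qquad
	\eta \coloneqq \frac{\log\log T^{\ast}}{200(\log T^{\ast})^{3/4}},
\]
and observe that the derivative estimates above give $|F(\tau,a)-F(T^{\ast},a^{\ast})|<1/(32(\log T^{\ast})^{3/4})$ throughout the rectangle $R\coloneqq [T^{\ast},T^{\ast}+\Delta_{\tau}]\times[a^{\ast}-\eta,a^{\ast}+\eta]\subset\R_{>0}\times[\log 6,\log 6+1]$, so property (d) automatically holds for any $(\tau_{k},a_{k})\in R$. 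Next I pick $\tau_{k}\in[T^{\ast},T^{\ast}+\Delta_{\tau}]$ satisfying (c) with the parity dictated by $k$: since the solution set of (c) is discrete with spacing $2\pi/g_{2}'(\tau)\ll\Delta_{\tau}$, the interval contains $\gg T^{\ast}(\log T^{\ast})^{5/4}/\log\log T^{\ast}$ candidates of each parity class. With $\tau_{k}$ fixed, the valid $a\in[a^{\ast}-\eta,a^{\ast}+\eta]$ for the first half of (b) form a discrete set with spacing $2\pi/\tau_{k}\ll 2\eta$, so many admissible $a_{k}$ exist. Finally, since $\nu\tau_{k}\log\tau_{k}$ sweeps an interval of length $\tau_{k}\log\tau_{k}\gg 2\pi$ as $\nu$ ranges over $[2,3]$, the second half of (b) is achieved by a suitable $\nu_{k}$.

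The main obstacle, I anticipate, is not conceptual but technical: rigorously establishing the derivative asymptotics above, particularly the \emph{sharp} bound $|\partial F/\partial a|\asymp 1/\log\log\tau$ (a merely bounded estimate would invalidate the argument, since then a full swing of $a$ through $[\log 6,\log 6+1]$ could push $F$ well beyond the tolerance $1/(32(\log\tau)^{3/4})$ in (d)). These estimates rest on implicit differentiation of $\log x\log\log x=2(\log\tau)^{2}$ together with careful tracking of lower-order terms; once in hand, the pigeonhole arguments above amount to little more than book-keeping.
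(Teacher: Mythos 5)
Your proposal is correct and follows essentially the same strategy as the paper's proof: satisfy property \ref{Property (d)} exactly at a base point (possible since the relevant quantity is continuous and tends to $\infty$), then make small adjustments in $\tau$ and $a$ to hit \ref{Property (c)} and the first half of \ref{Property (b)} exactly while staying within the $O((\log\tau)^{-3/4})$ tolerance of \ref{Property (d)}, choose $\nu_{k}$ last, and handle \ref{Property (a)} by induction with $T>(2\tau_{k-1})^{5}$. The only difference is bookkeeping: you control the drift of the quantity in \ref{Property (d)} via uniform derivative bounds on a small rectangle plus a pigeonhole/intermediate-value argument, whereas the paper perturbs by the smallest admissible increments (first in $\log x_{k}$, then in $a_{k}$) and bounds the induced error by the mean value theorem — both verifications are routine and of comparable effort.
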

It is obvious that $\Pi_{C}$ satisfies \eqref{eq: asymp behavior number system 1}. The function $\Pi_{C}$ is indeed an absolutely continuous prime distribution function:

\begin{lemma}
The function $\Pi_{C}$ is absolutely continuous and non-decreasing.
\end{lemma}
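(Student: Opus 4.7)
The plan is to verify absolute continuity by showing that the sum $\sum_{k}R_{k}$ is locally a finite sum of individually absolutely continuous functions, and to verify the non-decreasing property by directly estimating $\Pi_{C}'$ almost everywhere.

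First, the intervals $I_{k} \coloneqq (\tau_{k}^{1+\delta_{k}}, \tau_{k}^{\nu_{k}}]$ on which $R_{k}$ is supported are pairwise disjoint: since $\nu_{k} \leq 3$, property \ref{Property (a)} gives $\tau_{k+1}^{1+\delta_{k+1}} > \tau_{k+1} > (2\tau_{k})^{5} > \tau_{k}^{3} \geq \tau_{k}^{\nu_{k}}$. Hence on any compact subinterval of $[1,\infty)$ only finitely many $R_{k}$ are nonzero, so it suffices to show that each individual $R_{k}$ is absolutely continuous on $[1,\infty)$. Here property \ref{Property (b)} is decisive: it ensures
$$\tau_{k}\log(\tau_{k}^{1+\delta_{k}}) = (1+\delta_{k})\tau_{k}\log\tau_{k} \in 2\pi\Z, \qquad \tau_{k}\log(\tau_{k}^{\nu_{k}}) = \nu_{k}\tau_{k}\log\tau_{k} \in 2\pi\Z,$$
so $\sin(\tau_{k}\log x)$ vanishes at both endpoints of $I_{k}$. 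Combined with the vanishing of $R_{k}$ outside $I_{k}$, this makes $R_{k}$ continuous on $[1,\infty)$; since $R_{k}$ is smooth with bounded derivative in the interior of $I_{k}$, it is Lipschitz and hence absolutely continuous. The first summand $\int_{1}^{x}(1-u^{-1})/\log u \dif u$ is trivially absolutely continuous, so $\Pi_{C}$ inherits absolute continuity.

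For the monotone part it suffices to check $\Pi_{C}'(x) \geq 0$ almost everywhere. Outside $\bigcup_{k} I_{k}$ we have $\Pi_{C}'(x) = (1-x^{-1})/\log x \geq 0$. For $x \in I_{k}$, only the $k$-th oscillatory block contributes, so
$$\Pi_{C}'(x) = \frac{1-x^{-1}}{\log x} + \frac{\tau_{k}\cos(\tau_{k}\log x)}{x}.$$
Using $x > \tau_{k}^{1+\delta_{k}}$, the second term is bounded in absolute value by
$$\frac{\tau_{k}}{x} < \tau_{k}^{-\delta_{k}} = \frac{e^{-a_{k}}}{\log\tau_{k}} \leq \frac{1}{6\log\tau_{k}},$$
since $a_{k} \geq \log 6$. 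On the other hand, for $\tau_{0} \geq 3$ we have $1 - x^{-1} \geq 2/3$ and $\log x \leq \nu_{k}\log\tau_{k} \leq 3\log\tau_{k}$, giving $(1-x^{-1})/\log x \geq 2/(9\log\tau_{k})$. Since $2/9 > 1/6$, we obtain $\Pi_{C}'(x) \geq 1/(18\log\tau_{k}) > 0$, completing the proof.

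The main obstacle is simply recognizing the role played by each technical property: property \ref{Property (a)} decouples the oscillatory blocks so the infinite sum poses no convergence issue, property \ref{Property (b)} forces continuity across the boundary of each block, and the built-in lower bound $a_{k} \geq \log 6$ in the definition of $\delta_{k}$ is precisely calibrated so that the derivative of $R_{k}$ cannot overpower the monotone growth of the smooth main term. Once these three ingredients are identified, the argument reduces to the elementary calculation above.
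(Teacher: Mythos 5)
Your proposal is correct and follows essentially the same route as the paper: absolute continuity via the endpoint vanishing of $\sin(\tau_{k}\log x)$ guaranteed by property \ref{Property (b)}, and monotonicity by bounding $\Pi_{C}'$ on each oscillation interval using $x>\tau_{k}^{1+\delta_{k}}$ and the lower bound on $a_{k}$ (the paper uses $a_{k}\ge\log(2\nu_{k})$, which is the same calibration as your $a_{k}\ge\log 6$ with $\nu_{k}\le 3$). Your write-up merely supplies more detail, e.g.\ the disjointness of the intervals $I_{k}$ via property \ref{Property (a)}.
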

\begin{proof}
That $\Pi_{C}$ is absolutely continuous is a simple consequence of its definition and Property \ref{Property (b)}. For $\tau_{k}^{1+\delta_{k}}< x <\tau_{k}^{\nu_{k}}$, 
\[ 
	\Pi_{C}'(x) = \frac{1-1/x}{\log x} + \frac{\tau_{k}\cos(\tau_{k}\log x)}{x} \ge \frac{1}{2\nu_{k}\log \tau_{k}} - \tau_{k}^{-\delta_{k}} \ge 0,
\] 
by the definition of $\delta_{k}$ and since $a_{k} \ge \log(2\nu_{k})$. Hence $\Pi_{C}$ is non-decreasing.
\end{proof}

 As in the statement of Theorem \ref{th: optimality HL}, we define the associated continuous integer distribution function $N_{C}$ via $\dif N_{C}=\exp^{\ast}(\dif \Pi_{C})$ and set
\[
	\zeta_{C}(s) \coloneqq \int_{1^{-}}^{\infty}x^{-s}\dif N_{C}(x)= \exp\left(\int_{1}^{\infty}x^{-s}\dif \Pi_{C}(x)\right).
\]
Then, we have using \ref{Property (b)},

\[
	\log \zeta_{C}(s) = \int_{1}^{\infty}x^{-s}\dif \Pi_{C}(x) = \log\biggl(\frac{s}{s-1}\biggr) 
	+ \sum_{k=0}^{\infty}\frac{1}{2}\biggl(\frac{\tau_{k}^{1-(1+\delta_{k})s}-\tau_{k}^{1-\nu_{k}s}}{s-\I\tau_{k}} +\frac{\tau_{k}^{1-(1+\delta_{k})s}-\tau_{k}^{1-\nu_{k}s}}{s+\I\tau_{k}}\biggr).
\]
This Mellin-Stieltjes transform is absolutely convergent for $\sigma>1$, and from the above formula, we immediately see that $\log \zeta_{C}$ has an analytic continuation to any simply connected region contained in $\sigma>0$ which does not contain $1$, and furthermore
$\zeta_{C}$ has a meromorphic continuation to $\sigma>0$ with a single simple pole at $s=1$. We remark that $\log \zeta_{C}$ indeed reaches the convexity bound (cf. \cite[Theorem 1.19, p.~201]{Tenenbaumbook}): if $t = \tau_{k}$, then $\log \zeta_{C} (\sigma+\I t) \gg t^{1-(1+\delta_{k})\sigma}$.

To go from the zeta function back to $N_{C}$, one uses Perron's inversion formula. For the sake of technical simplicity, we will employ\footnote{It might still be of interest to try to carry out the computation with the Perron formula for $N_{C}$ instead of the one for its primitive, since it appears that on the sequence $\tilde{x}_{k}$ defined via $\log\tau_{k} = \sqrt{\log\tilde{x}_{k} \log\log\tilde{x}_{k}}$, the contribution of the saddle points (see Section \ref{sec: The contribution from the saddle points}) is at least $\tilde{x}_{k}\exp(-2\sqrt{\log\tilde{x}_{k} \log\log\tilde{x}_{k}} + \text{ lower order })$. If the remainder of the Perron integral could be adequately estimated, this would improve the range for the constant $c$ in Theorem \ref{th: optimality HL} to any $c > 2$. } Perron's formula for the primitive of $N_{C}$, so that the integral converges absolutely.  For $\kappa>1$, we have
\[
	\int_{1}^{x}N_{C}(u)\dif u = \frac{1}{2\pi\I}\int_{\kappa-\I\infty}^{\kappa+\I\infty}\frac{x^{s+1}}{s(s+1)}\zeta_{C}(s)\dif s = \frac{1}{2\pi\I}\int_{\kappa-\I\infty}^{\kappa+\I\infty}\frac{x^{s+1}}{(s-1)(s+1)}\exp\biggl(\sum_{k}\dotso\biggr)\dif s.
\]

We shift the contour of integration to the left, more specifically to the contour considered by Hilberdink and Lapidus in their proof of Theorem \ref{th: HilberdinkLapidus}. Set 
$$\GHL \coloneqq \{\, 1-1/\e + \I t: 0<t<\e^{\e}\,\} \cup \{ \, 1-\log\log t/\log t + \I t: t>\e^{\e}\,\}.$$
Set $\rho \coloneqq \res_{s=1}\zeta_{C}(s)$. By the residue theorem, we get a contribution from the pole at $s=1$:
\begin{equation}
\label{eq: Perron formula}
	\int_{1}^{x}N_{C}(u)\dif u = \frac{\rho}{2}x^{2} + \frac{1}{\pi}\Im \int_{\GHL} \frac{x^{s+1}}{(s-1)(s+1)}\exp\biggl(\sum_{k}\dotso\biggr)\dif s,
\end{equation}
where we have used $\zeta_{C}(\overline{s})=\overline{\zeta_{C}(s)}$ to restrict the path of integration to the upper half plane. In order to estimate the remaining integral, we will exploit the fact that $\tau_{k}^{1-(1+\delta_{k})s}/(s-\I\tau_{k})$ becomes relatively small when $t$ is far from $\tau_k$. Specifically, for each $k$ we will choose a suitable $x$ (namely $x=x_{k}$ defined above in \eqref{eq: xk}) so that the integral near $t=\tau_{k}$ will give a contribution of order $x\exp(-c\sqrt{\log x\log\log x})$, and so that the rest of the integral is of lower order. Making this explicit is however a technically challenging problem. One of the difficulties that arises comes from taking the exponential: lower bounds on $\abs{\log\zeta_{C}}$ do not necessarily imply lower bounds on $\abs{\zeta_{C}}$, and furthermore, exponentiation also introduces a lot of oscillation. In order to extract the contribution of the integral near $t=\tau_{k}$, we will use the saddle-point method.

Let us briefly review some ideas connected with the saddle-point method. Given a region $\Omega\subseteq \C$, a contour $\Gamma\subseteq\Omega$, and analytic functions $f$ and $g$  on $\Omega$, one might proceed as follows to estimate the integral $\int_{\Gamma}g(z)\e^{f(z)}\dif z$. First, one computes the saddle points of $f$; these are the points $s_{j}\in\Omega$ for which $f'(s_{j})=0$. Near a saddle point, the graph of $\Re f$ looks like a saddle surface. The idea is to shift the contour $\Gamma$ while fixing the endpoints to a contour $\tilde{\Gamma}$ which passes through the saddle points in such a way that on the new contour $\Re f(s)$ reaches a maximum at the saddle points -- whether this is possible of course depends  on the specific situation.  
Approximating $f$ by its second order Taylor polynomial near saddle points, one gets the following approximation:
\[	
	\int_{\Gamma}g(z)\e^{f(z)}\dif z \approx \sum_{j} g(s_{j})\e^{f(s_{j})}\int\e^{f''(s_{j})(s-s_{j})^{2}/2}\dif s \approx \sum_{j} \sqrt{\frac{2\pi}{-f''(s_{j})}}g(s_{j})\e^{f(s_{j})}.
\]
Often, one wants to estimate $\int_{\Gamma}g(z)\e^{\lambda f(z)}\dif z$ for a parameter $\lambda$ tending to $\infty$, and under certain assumptions, one can deduce that 
\[
	\int_{\Gamma}g(z)\e^{\lambda f(z)}\dif z \sim \sum_{j}\sqrt{\frac{2\pi}{-\lambda f''(s_{j})}}g(s_{j})\e^{\lambda f(s_{j})}, \quad \text{ as } \lambda \to \infty.
\]
We refer to \cite[Chapters 5 and 6]{deBruijn} for a classical account of the method, and to \cite[Section 3.6]{EstradaKanwalbook} for a distributional approach to this technique.

In our case, we will apply the saddle-point method to a portion of the integral near $t=\tau_{k}$ with 
\[
	f(s) = f_{k}(s) \coloneqq (s+1)\log x_{k} + \frac{1}{2}\frac{\tau_{k}^{1-(1+\delta_{k})s}}{s-\I\tau_{k}}, \quad g(s) = g_{k}(s) \coloneqq \frac{\exp\left(\sideset{}{'}\sum \cdots\right)}{(s-1)(s+1)}, 
\]
where we use the notation $\sideset{}{'}\sum$ to indicate that we exclude the term $\tau_{k}^{1-(1+\delta_{k})s}/(2(s-\I\tau_{k}))$ from the summation. In the absence of a parameter $\lambda$ as in the standard situation mentioned above, we are led to make a rather explicit and detailed analysis of the integral term appearing in the right-hand side of \eqref{eq: Perron formula}.

Summarizing, we will get
\[
	\int_{1}^{x}N_{C}(u)\dif u = \frac{\rho}{2}x^{2} + \text{ contribution from saddle points } + \text{ remainder }.
\]
In Section \ref{sec: The contribution from the saddle points}, we will deal with the saddle points and show that their contribution is $\gg x^{2}\exp(-c\sqrt{\log x\log\log x})$. In Section \ref{sec: The remainder}, we will show that the remaining part of the Perron integral is of strictly lower order. 
 Summing up all results, we will prove that
 \begin{equation}
\label{eq: omega for int N}
	\int_{1}^{x}N_{C}(u)\dif u = \frac{\rho}{2}x^{2} + \Omega_{\pm}\bigl(x^{2}\exp\bigl(-c\sqrt{\log x\log\log x}\bigr)\bigr)
\end{equation}
for any $c>2\sqrt{2}$. The relation \eqref{eq: omega for int N} readily implies \eqref{eq: asymp behavior number system 2}, so this will finally establish Theorem \ref{th: optimality HL}.

From now on, we fix a specific $k$, and investigate the Perron integral for $x=x_{k}$ given by \eqref{eq: xk}. For ease of notation, we will drop the index $k$ everywhere, unless we need to make the distinction between the specific $\tau_{k}$ and the other $\tau_{j}$, $j\neq k$.

\section{The contribution from the saddle points}
\label{sec: The contribution from the saddle points}
\subsection{The saddle points}
Recall that we have set 
\[
	f(s) =  (s+1)\log x + \frac{1}{2}\frac{\tau^{1-(1+\delta)s}}{s-\I\tau}, \quad \log \tau = \sqrt{\frac{\log x\log\log x}{2}}.
\]
Also,
\[\delta=\frac{\log\log\tau +a}{\log \tau}.
\]
We have
\begin{equation}
\label{eq: f'}
	f'(s) = \log x - \frac{1}{2}\frac{\tau^{1-(1+\delta)s}}{s-\I\tau}\biggl((1+\delta)\log \tau + \frac{1}{s-\I\tau}\biggr).
\end{equation}

We will show that $f$ has a saddle point on the line $t=\tau$. Due to the periodicity of $\tau^{1-(1+\delta)s}$, there will also be saddle points near $t = \tau + 2\pi m /((1+\delta)\log\tau)$, when $m\in\Z$ is not too large. Making this explicit, set 
\begin{equation}
\label{eq: t+-m}
	t_{m}^{+}\coloneqq \tau + \frac{2\pi m + \pi/2}{(1+\delta)\log\tau}, \quad t_{m}^{-}\coloneqq \tau + \frac{2\pi m -\pi/2}{(1+\delta)\log\tau},	
\end{equation}
and set $V_{m}$ to be the rectangle with vertices $1/2 + \I t_{m}^{\pm}$, $1 + \I t_{m}^{\pm}$, $m\in \Z$.

\begin{lemma}\label{lemma: uniqueness saddle point}
Suppose $\abs{m} < \eps \log\tau$ for some sufficiently small (fixed) $\eps>0$. Then $f$ has a unique saddle point (of multiplicity 1) in the interior of $V_{m}$. 
\end{lemma}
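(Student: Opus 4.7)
The plan is to split $f'$ into a dominant part and a small perturbation, analyze the zeros of the dominant part via the argument principle, and then upgrade to $f'$ itself by Rouch\'{e}'s theorem. Concretely, from \eqref{eq: f'} I would write $f'(s)=A(s)+B(s)$ with
\[
A(s) \coloneqq \log x - \frac{(1+\delta)\log\tau}{2}\cdot\frac{\tau^{1-(1+\delta)s}}{s-\I\tau}, \qquad B(s) \coloneqq -\frac{1}{2}\cdot\frac{\tau^{1-(1+\delta)s}}{(s-\I\tau)^{2}}.
\]
Since $|s-\I\tau|\ge\sigma\ge1/2$ on $V_{m}$, the perturbation $B$ is smaller than $A$ on each edge by a factor essentially $(1+\delta)\log\tau$: on the top/bottom edges, property \ref{Property (b)} forces $\tau^{1-(1+\delta)s}=\mp\I\,\tau^{1-(1+\delta)\sigma}$, and a direct computation gives
$|A(s)|\ge|\Im A(s)|=\sigma(1+\delta)\log\tau\cdot\tau^{1-(1+\delta)\sigma}/(2|s-\I\tau|^{2})$,
which exceeds $|B(s)|$ by the factor $(1+\delta)\sigma\log\tau\ge(\log\tau)/2$; on the right edge $\sigma=1$, $\tau^{-\delta}\asymp 1/\log\tau$ is negligible and $|A|\sim\log x$ dominates $|B|=O(1/\log\tau)$; on the left edge $\sigma=1/2$, the second summand of $A$ is of order $\tau^{1/2}\log\tau$, dwarfing both $\log x$ and $|B|=O(\tau^{1/2})$. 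Thus $|B|<|A|$ on all of $\partial V_{m}$, which is what Rouch\'{e} requires.

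Next I would count the zeros of $A$ via the substitution $A(s)=\log x\cdot\bigl(1-e^{-H(s)}\bigr)$ with
\[
H(s)\coloneqq ((1+\delta)s-1)\log\tau + \log\biggl(\frac{2(s-\I\tau)\log x}{(1+\delta)\log\tau}\biggr),
\]
which is analytic on a neighborhood of $\overline{V_{m}}$ (principal branch, since $\Re(s-\I\tau)>0$). The zeros of $A$ in $V_{m}$ correspond bijectively to $s\in V_{m}$ with $H(s)\in 2\pi\I\Z$. Parameterizing $s=\sigma+\I(\tau+y/((1+\delta)\log\tau))$ with $y\in[2\pi m-\pi/2,2\pi m+\pi/2]$, property \ref{Property (b)} supplies $M\in\Z$ with $(1+\delta)\tau\log\tau=2\pi M$, so that
\[
\Im H(s)=2\pi M+y+\arg(s-\I\tau),\qquad \Re H(s)=((1+\delta)\sigma-1)\log\tau+\log\biggl(\frac{2|s-\I\tau|\log x}{(1+\delta)\log\tau}\biggr).
\]

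For $|m|<\eps\log\tau$ with $\eps$ small enough, $|\arg(s-\I\tau)|<\pi/4$ throughout $V_{m}$. Consequently $\Im H>2\pi(M+m)$ on the top edge and $\Im H<2\pi(M+m)$ on the bottom edge. The identity $\delta\log\tau=\log\log\tau+a$ together with $\log x\asymp(\log\tau)^{2}/\log\log\tau$ yields $\Re H>0$ on the right edge (of order $2\log\log\tau$) and $\Re H<0$ on the left edge (of order $-(\log\tau)/2$). Traversing $\partial V_{m}$ counterclockwise (right edge up, top leftwards, left down, bottom rightwards), the image $H(\partial V_{m})$ visits the right, top, left, and bottom half-planes relative to $2\pi\I(M+m)$ in that order, hence winds exactly once counterclockwise around $2\pi\I(M+m)$. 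Since $|\Im H(s)-2\pi(M+m)|\le 3\pi/4$ on $\partial V_{m}$, no other integer multiple of $2\pi\I$ is enclosed, and the argument principle delivers exactly one zero of $A$ in $V_{m}$.

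Rouch\'{e}'s theorem applied with $|B|<|A|$ on $\partial V_{m}$ then transfers this count to $f'=A+B$, and since the total is $1$, the zero is forced to be simple. I expect the main obstacle to be the combined boundary analysis for $H$: one must simultaneously control the sign patterns of $\Re H$ on the two vertical edges and of $\Im H$ on the two horizontal edges, and in particular ensure that the perturbation $\arg(s-\I\tau)$ is small enough for the four edges to land in the four correct half-planes around $2\pi\I(M+m)$. This is precisely why the hypothesis $|m|<\eps\log\tau$ is imposed and why the arithmetic constraint \ref{Property (b)} is crucial for identifying the correct integer multiple.
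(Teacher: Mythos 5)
Your proposal is correct, and it reaches the conclusion by a route that differs in execution from the paper's, though both rest on a winding-number count over $\partial V_{m}$. The paper applies the argument principle directly to $f'$, tracking $\arg f'$ qualitatively along the four edges (first term dominant on the right edge, second term taking over on the top, left and bottom edges) and concluding the winding number is $1$. You instead split $f'=A+B$ as in \eqref{eq: f'}, verify $\abs{B}<\abs{A}$ on each edge (your use of property \ref{Property (b)} to pin the phase $\tau^{1-(1+\delta)s}=\mp\I\,\tau^{1-(1+\delta)\sigma}$ on the horizontal edges, giving $\abs{\Im A}=\sigma(1+\delta)\log\tau\,\abs{B}$, is exactly the right computation), and then count the zeros of $A$ exactly through the substitution $A=\log x\,(1-\e^{-H})$ and a quadrant argument for $H$ around $2\pi\I(M+m)$, with Rouch\'{e} transferring the count to $f'$. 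What your version buys is a fully quantitative bookkeeping: the sign pattern of $\Re H$ on the vertical edges and of $\Im H-2\pi(M+m)$ on the horizontal edges, together with the strip bound $\abs{\Im H-2\pi(M+m)}\le 3\pi/4$, makes both the value $1$ of the winding number and the exclusion of all other lattice points $2\pi\I n$ completely explicit, whereas the paper's description of the rotation of $\arg f'$ is left at a heuristic level; the price is the extra Rouch\'{e} step and the branch-of-logarithm setup (legitimate here since $\Re(s-\I\tau)\ge 1/2>0$). One cosmetic slip: on the left edge $\sigma=1/2$ the second summand of $A$ has modulus $\asymp\tau^{1/2}\sqrt{\log\tau}$ (since $\tau^{-\delta/2}\asymp(\log\tau)^{-1/2}$), not $\tau^{1/2}\log\tau$; this does not affect the dominance over $\log x\asymp(\log\tau)^{2}/\log\log\tau$ and over $\abs{B}\ll\tau^{1/2}$, so the Rouch\'{e} inequality and hence the conclusion stand.
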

\begin{proof}
Starting with the vertical edge on the right, and continuing in a counter clockwise fashion, we name the edges of $\partial V_{m}$ as $E_{j}$, $j=1, \ldots, 4$. By the assumption on $m$, $\arg(s-\I\tau) <\eps'$ for some small $\eps'$ when $s\in \partial V_{m}$. On the first segment $E_{1}$, the first term in \eqref{eq: f'} is dominant and so the argument of $f'$ is close to 0. On $E_{2}$, the second term becomes more and more significant, and the argument increases from about 0 to about $\pi/2$. On $E_{3}$ the argument increases further from about $\pi/2$ to about $3\pi/2$. Finally, on $E_{4}$, the argument increases further from about $3\pi/2$ to about $2\pi$, as the first term becomes once again dominant. Hence the winding number of the curve $f'(\partial V_{m})$ around the origin is 1, and the lemma follows from the argument principle.
\end{proof}

Suppose\footnote{The choice for the exponent $3/4$ is a bit arbitrary. It allows us to achieve error terms of decent quality with relatively modest effort (compared to the bound $\varepsilon \log \tau$). Later on, we will impose a stronger restriction on $m$, which will appear naturally. We have chosen not to impose this restriction here already and to begin with this rather arbitrary one, because we think the intrinsic nature of the additional restriction might get clouded otherwise.} now that $\abs{m} < (\log\tau)^{3/4}$. By the above lemma, for every such $m$, $f$ has a unique saddle point $s_{m}$ in the rectangle $V_{m}$, which is of multiplicity 1. We have $f'(s_{m})=0$, which is equivalent to
\begin{equation}
\label{eq: saddle point identity}
\frac{1}{2}\frac{\tau^{1-(1+\delta)s_{m}}}{s_{m}-\I\tau} = \frac{\log x}{(1+\delta)\log\tau + \frac{1}{s_{m}-\I\tau}}.
\end{equation}
By taking logarithms, one sees that for each $m$, there is an integer $n_{m}$ such that the following implicit equations for the real and imaginary part of $s_{m}= \sigma_{m} + \I t_{m}$ hold:
\begin{subequations}
	\begin{equation}
	\label{eq: saddle point eq sigma}
	\sigma_{m} = \frac{1}{1+\delta}\biggl(1 - \frac{1}{\log\tau}\biggl(\log\log x + \log 2 + \log\abs{s_{m}-\I\tau}-\log\abs{(1+\delta)\log\tau + \frac{1}{s_{m}-\I\tau}}\biggr)\biggr), 
	\end{equation}
	\begin{equation}
	\label{eq: saddle point eq t}
	t_{m} = \frac{1}{(1+\delta)\log\tau}\biggl(\arg\biggl((1+\delta)\log\tau+\frac{1}{s_{m}-\I\tau}\biggr) - \arg(s_{m}-\I\tau) + 2\pi n_{m}\biggr). 
	\end{equation}
\end{subequations}

Let us first look at the equation \eqref{eq: saddle point eq sigma} for $\sigma_{m}$. Since $s_{m}\in V_{m}$ and $\abs{m}<(\log\tau)^{3/4}$, $\log\abs{s_{m}-\I\tau} \ll 1$ and $\log\abs{(1+\delta)\log\tau + (s_{m}-\I\tau)^{-1}} = \log\log\tau + O(1)$. Inserting this in \eqref{eq: saddle point eq sigma} and using a Taylor approximation for $1/(1+\delta)$, we see that
\begin{align*}
	\sigma_{m} 	&= \biggl(1-\frac{\log\log\tau + a}{\log\tau} + O\biggl(\biggl(\frac{\log\log\tau}{\log\tau}\biggr)^{2}\biggr)\biggr)\biggl(1-\frac{1}{\log\tau}\biggl(\log\log x - \log\log\tau + O(1)\biggr)\biggr)\\
				&= 1 - \sqrt{2}\sqrt{\frac{\log\log x}{\log x}} + O\biggl(\frac{1}{\sqrt{\log x\log\log x}}\biggr).
\end{align*}
Using this approximation for $\sigma_{m}$ (and again $\abs{m}<(\log\tau)^{3/4}$), one sees that $\log\abs{s_{m}-\I\tau} \ll (\log x\log\log x)^{-1/8}$ and $\log\abs{(1+\delta)\log\tau + (s_{m}-\I\tau)^{-1}} = \log\log\tau + O((\log\log x/\log x)^{1/2})$, so that 
\begin{equation}
\label{eq: approx sigma}
	\sigma_{m} = 1 - \sqrt{2}\sqrt{\frac{\log\log x}{\log x}} - \frac{\sqrt{2}(a+\log 2)}{\sqrt{\log x\log\log x}} + O \biggl( \frac{1}{(\log x\log\log x)^{5/8}}\biggr).
\end{equation}
Also, repeating the argument for $\sigma_{0}$ we obtain
\begin{equation}
\label{eq: approx sigma_{0}}
	\sigma_{0} = 1 - \sqrt{2}\sqrt{\frac{\log\log x}{\log x}} - \frac{\sqrt{2}(a+\log 2)}{\sqrt{\log x\log\log x}} + O\biggl(\frac{\log\log x}{\log x}\biggr).
\end{equation}

Let us now look at equation \eqref{eq: saddle point eq t} for $t_{m}$. By assumption \ref{Property (b)}, $(1+\delta)\tau\log\tau = 2\pi M$ for some integer $M$. Hence, we see that $t_{0}=\tau$ satisfies the equation with $n_{0}=M$. To see that indeed $\Im s_{0} = \tau$, one can check using a continuity argument that equation \eqref{eq: saddle point eq sigma} for $m=0$ and $\tau=t_{0}$ has a solution $\sigma_{0}$ between $1/2$ and $1$. The point $\sigma_{0}+\I\tau$ then satisfies both \eqref{eq: saddle point eq sigma} and \eqref{eq: saddle point eq t}, and by uniqueness (Lemma \ref{lemma: uniqueness saddle point}), we must have $s_{0}=\sigma_{0}+\I\tau$. For general $m$, different from 0, we will again approximate the solutions. The arguments appearing in \eqref{eq: saddle point eq t} can be written as follows:
\begin{align*}
\alpha_{m}&\coloneqq \arg(\sigma_{m} + \I(t_{m}-\tau)) = \arctan\biggl(\frac{t_{m}-\tau}{\sigma_{m}}\biggr), \\
\beta	_{m}	&\coloneqq \arg\biggl((1+\delta)\log\tau + \frac{1}{\sigma_{m} + \I(t_{m}-\tau)}\biggr) = \arctan\biggl(\frac{-\frac{t_{m}-\tau}{\sigma_{m}^{2}+(t_{m}-\tau)^{2}}}{(1+\delta)\log\tau + \frac{\sigma_{m}}{\sigma_{m}^{2}+(t_{m}-\tau)^{2}}}\biggr).
\end{align*}
Using the bound $\arctan x \ll \abs{x}$, and the fact that $t_{m}\in V_{m}$  ($\:\abs{m}<(\log\tau)^{3/4}$), we find that $n_{m} = M + m$, and we get a first approximation for $t_{m}$: 
\[
	t_{m} = \tau + \frac{2\pi m}{(1+\delta)\log\tau} + O\biggl(\frac{1}{(\log\tau)^{5/4}}\biggr).
\]
Employing the above approximation and \eqref{eq: approx sigma}, we can get finer estimates for $\alpha_{m}$ and $\beta_{m}$:
\begin{align*}
	\alpha_{m} 	&= \frac{1}{\sigma_{m}}\frac{2\pi m}{(1+\delta)\log\tau} + O\biggl(\frac{1}{(\log\tau)^{5/4}}\biggr) + O\biggl(\frac{\abs{m}^{3}}{(\log\tau)^{3}}\biggr)\\
			&= \frac{2\pi m}{(1+\delta)\log\tau} + O\biggl(\frac{\abs{m}}{\log\tau}\sqrt{\frac{\log \log x}{\log x}} + \frac{1}{(\log\tau)^{5/4}} + \frac{\abs{m}^{3}}{(\log\tau)^{3}} \biggr),\\
	\beta_{m}		&= O\biggl(\frac{1}{(\log\tau)^{5/4}}\biggr).		
\end{align*} 
They, in turn, yield a better asymptotic estimate for $t_{m}$:
\[
	t_{m} = \tau + \frac{2\pi m}{(1+\delta)\log\tau}\biggl(1-\frac{1}{(1+\delta)\log\tau}\biggr) + O\biggl( \frac{1}{(\log\tau)^{9/4}}+ \frac{\abs{m}\log\log\tau}{(\log\tau)^{3}} + \frac{\abs{m}^{3}}{(\log\tau)^{4}}\biggr).
\]
Repeating the procedure one final time, we obtain
\begin{align*}
	\alpha_{m}	&= \frac{1}{\sigma_{m}}\frac{2\pi m}{(1+\delta)\log\tau}\biggl(1-\frac{1}{(1+\delta)\log\tau}\biggr) + O\biggl(\frac{1}{(\log\tau)^{9/4}} + \frac{\abs{m}\log\log\tau}{(\log\tau)^{3}} + \frac{\abs{m}^{3}}{(\log\tau)^{3}}\biggr)\\
			&= \frac{2\pi m}{(1+\delta)\log\tau}\biggl(1+\sqrt{2}\sqrt{\frac{\log\log x}{\log x}}\biggr) + O\biggl(\frac{\abs{m}}{(\log\tau)^{2}} + \frac{\abs{m}^{3}}{(\log\tau)^{3}}\biggr),\\
	\beta_{m}		&= -\frac{2\pi m}{(1+\delta)^{2}(\log\tau)^{2}} + O\biggl(\frac{\abs{m}\log\log\tau}{(\log\tau)^{3}}+ \frac{\abs{m}^{3}}{(\log\tau)^{4}}\biggr),		
\end{align*}
so that 
\begin{equation}
\label{eq: approx t}
t_{m} = \tau + \frac{2\pi m}{(1+\delta)\log\tau}\biggl(1 - \frac{1+\sqrt{2}\sqrt{\frac{\log\log x}{\log x}}}{(1+\delta)\log\tau}\biggr) + O\biggl(\frac{\abs{m}}{(\log\tau)^{3}} + \frac{\abs{m}^{3}}{(\log\tau)^{4}}\biggr).
\end{equation}

For future computations, it is useful to have an approximation for $f$ and $f'$ near the saddle points.
\begin{lemma}
\label{lem: approx f and f'}
There are continuous functions $\lambda_{m}(s)$ and $\tilde{\lambda}_{m}(s)$ such that
\begin{align*}
	f(s) 	&= f(s_{m}) + \frac{f''(s_{m})}{2}(s-s_{m})^{2}(1+\lambda_{m}(s)),\\
	f'(s) 	&= f''(s_{m})(s-s_{m})(1+\tilde{\lambda}_{m}(s)), 
\end{align*}
and with the property that for each $\eps>0$ there exists an $\eta>0$ independent of $\tau$ and $m$ such that 
\[
	\abs{s-s_{m}} < \frac{\eta}{\log\tau} \implies \abs{\lambda_{m}(s)} + \abs[1]{\tilde{\lambda}_{m}(s)} < \eps.
\] 
\end{lemma}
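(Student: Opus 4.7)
The plan is to reduce the lemma to Taylor's theorem with remainder, once we have sufficiently strong uniform control on the derivatives of $f$ near $s_{m}$. Write $f(s) = (s+1)\log x + h(s)$ with
$$h(s) = \frac{\tau^{1-(1+\delta)s}}{2(s-\I\tau)},$$
so that $f^{(n)} = h^{(n)}$ for $n\ge 2$. Setting $A(s) := h'(s)/h(s) = -(1+\delta)\log\tau - 1/(s-\I\tau)$, a direct computation yields $h'' = h(A^{2}+A')$, $h''' = h(A^{3}+3AA'+A'')$, and more generally $A^{(j)}(s) = (-1)^{j+1}j!/(s-\I\tau)^{j+1}$ for $j\ge 1$.

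The core estimates are the following uniform bounds on the disk $D_{m} := \{s : \abs{s-s_{m}} \le \eta_{0}/\log\tau\}$, for a small absolute constant $\eta_{0}>0$ and $\abs{m} < (\log\tau)^{3/4}$. The approximations \eqref{eq: approx sigma} and \eqref{eq: approx t} give $\abs{s_{m}-\I\tau} \asymp 1$ uniformly in $\tau$ and $m$ (since $\sigma_{m}$ lies in a fixed subinterval of $(1/2,1)$ for $\tau$ large, and $\abs{t_{m}-\tau} = O((\log\tau)^{-1/4})$), hence $\abs{s-\I\tau} \asymp 1$ on $D_{m}$. This immediately gives $\abs{A^{(j)}(s)} = O_{j}(1)$ for $j\ge 1$ and $\abs{A(s)} = (1+\delta)\log\tau + O(1)$. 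Since $\abs{\tau^{-(1+\delta)(s-s_{m})}} = \tau^{-(1+\delta)(\Re s - \sigma_{m})} \in [\e^{-2\eta_{0}},\e^{2\eta_{0}}]$ on $D_{m}$, together with $\abs{s-\I\tau}/\abs{s_{m}-\I\tau} \asymp 1$, we also obtain $\abs{h(s)} \asymp \abs{h(s_{m})}$. Combining these, the leading term $hA^{2}$ dominates, so
$$\abs{f''(s_{m})} \asymp \abs{h(s_{m})}(\log\tau)^{2}, \qquad \sup_{s\in D_{m}}\abs{f'''(s)} \le C\abs{h(s_{m})}(\log\tau)^{3},$$
for an absolute constant $C$.

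With these bounds in hand, define
$$\lambda_{m}(s) := \frac{2(f(s)-f(s_{m}))}{f''(s_{m})(s-s_{m})^{2}} - 1, \qquad \tilde{\lambda}_{m}(s) := \frac{f'(s)}{f''(s_{m})(s-s_{m})} - 1.$$
Analyticity of $f$ together with $f'(s_{m})=0$ shows that both ratios have removable singularities at $s_{m}$ (with value $0$), making them continuous on $D_{m}$. Taylor's formula with integral remainder applied to $f$ (using $f'(s_{m})=0$) and to $f'$ gives, for $s\in D_{m}$,
$$\Bigl| f(s) - f(s_{m}) - \tfrac{1}{2}f''(s_{m})(s-s_{m})^{2}\Bigr| \le \tfrac{1}{6}\abs{s-s_{m}}^{3}\sup_{D_{m}}\abs{f'''},$$
and the analogous bound for $\abs{f'(s) - f''(s_{m})(s-s_{m})}$ with constant $1/2$. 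Dividing each by the respective leading term and using the bounds above yields $\abs{\lambda_{m}(s)} + \abs[1]{\tilde{\lambda}_{m}(s)} \le C'\eta_{0}$ for an absolute constant $C'$. Given $\eps>0$, choosing $\eta := \min(\eta_{0},\eps/C')$ then delivers the conclusion.

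The main obstacle is ensuring that every implied constant genuinely does not depend on $\tau$ (for $\tau$ large) or on $m$ in the allowed range. This rests entirely on making the bounds $\abs{s_{m}-\I\tau} \asymp 1$ and $\abs{h(s)/h(s_{m})} \asymp 1$ on $D_{m}$ fully uniform, which is a matter of careful bookkeeping from \eqref{eq: approx sigma} and \eqref{eq: approx t}; no further input is needed.
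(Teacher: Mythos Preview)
Your argument is correct and follows essentially the same route as the paper: apply Taylor's theorem with remainder at $s_{m}$ (using $f'(s_{m})=0$), then control the ratio $|f'''|/|f''(s_{m})|$ by $O(\log\tau)$ via the explicit form of the higher derivatives of $f$ together with the uniform bound $|s_{m}-\I\tau|\asymp 1$. The only differences are cosmetic---you compute the derivatives via the logarithmic derivative $A=h'/h$, while the paper writes out the Leibniz expansion directly---and one small expository slip: the bound you obtain is really $|\lambda_{m}(s)|+|\tilde{\lambda}_{m}(s)| \le C'|s-s_{m}|\log\tau$ on $D_{m}$, not $C'\eta_{0}$, which is what makes your final choice of $\eta$ work.
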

\begin{proof}
We will show the assertion for $f$, the proof of the statement concerning $f'$ is similar. As $f$ is analytic in a neighborhood of $s_{m}$ and $f'(s_{m}) = 0$, we have, for $s$ sufficiently close to $s_{m}$,
\[
 \left|f(s) - f(s_{m}) - \frac{f''(s_{m})}{2}(s-s_{m})^{2}\right| \leq \frac{M_{f,s,m} |s-s_{m}|^{3}}{6}, 
\]
where $M_{f,s,m}$ is the maximum of $|f^{(3)}|$ on the line between $s_{m}$ and $s$.
The derivatives of $f$ for $n\geq 2$ are
\[
	f^{(n)}(s) = \frac{1}{2}\tau^{1-(1+\delta)s}\sum_{l=0}^{n}\binom{n}{l}\frac{(-1)^{l}l!}{(s-\I\tau)^{l+1}}(-1)^{n-l}((1+\delta)\log\tau)^{n-l}.
\]
Therefore, since $s_{m}-\I\tau \sim 1$,
\[
 |\lambda_{m}(s)| \leq \frac{M_{f,s,m} |s-s_{m}|}{3|f''(s_{m})|} \ll |s-s_{m}| \log \tau, 
\]
where the implicit constant is independent of $m$ and $\tau$. The continuity of $\lambda_{m}$ is obvious.
\end{proof}

\subsection{The paths of steepest descent}
In order to estimate the contribution of the saddle points, we shall shift, near each saddle point, the contour to the so-called path of steepest descent. This is a contour through the saddle point which, when starting at the saddle point, displays the biggest decrease in $\Re f(s)$ among all possible paths. Intuitively, this path connects the two ``valleys'' on both sides of the saddle point in the most economical way. Starting in one of the ``valleys'', the tangent vector along this path is at first a positive multiple of $\nabla \Re f(s)$, as $\Re f(s)$ increases to a maximum at the saddle point. After passing the saddle point, the tangent vector along this path is a positive multiple of $-\nabla \Re f(s)$, as $\Re f(s)$ decreases. Using the Cauchy-Riemann equations, one sees that $\Im f(s)$ is constant along this path. It is worth mentioning that there is another path through the saddle point on which $\Im f(s)$ is constant, namely the path of steepest \emph{ascent} (which displays the opposite behavior of the \emph{descent} path).

We will show that for each $m$, there is a path of steepest descent which goes from the bottom horizontal edge of $V_{m}$ to its upper horizontal edge. The situations for $m=0$ and $m\neq0$ are a bit different; let us first describe it for $m=0$.

Write $\theta = (t-\tau)(1+\delta)\log\tau$, so that $\theta$ varies between $-\pi/2$ and $\pi/2$ as $t$ varies between $t_{0}^{-}$ and $t_{0}^{+}$; see \eqref{eq: t+-m}. The equation $\Im f(s) = \Im f(s_{0})$ is equivalent to
\begin{equation}
\label{eq: steepest path 0}
	\frac{1}{2}\frac{\tau^{1-(1+\delta)\sigma}}{\sigma^{2}+ (t-\tau)^{2}}\bigl((t-\tau)\cos\theta + \sigma\sin\theta\bigr) = (t-\tau)\log x.
\end{equation}
Trivial solutions are given by the line $s=\sigma+\I\tau$. It is however readily seen that this line is the path of steepest ascent, by examining the behavior of $\Re f$. Consider now $t\neq \tau$ fixed (hence also $\theta\neq 0$ fixed). Then, the equation \eqref{eq: steepest path 0} has a unique solution for $\sigma$ in the range $1/2<\sigma<1$: first of all $\sgn(\LHS)=\sgn(\RHS)$, and second we have that $\abs{\LHS}$ is monotonically decreasing in that range, with $\abs{\LHS(\sigma=1)}\ll |t-\tau|$, and $\abs{\LHS(\sigma=1/2)}\gg \tau^{(1-\delta)/2}|t-\tau|$ when $\theta\le \pi/4$ and $\abs{\LHS(\sigma=1/2)}\gg \tau^{(1-\delta)/2}$ when $\theta \ge \pi/4$. This gives the existence of another path of constant imaginary part through $s_{0}$, which crosses the horizontal edges of $V_{0}$, and which is necessarily the path of steepest descent.

The situation in the case $m\neq 0$ is less straightforward. Consider the case $m>0$ (the case $m<0$ is analogous). It is convenient to write 
\[
\theta = (t-\tau)(1+\delta)\log\tau - 2\pi m
\]
so that, as $t$ varies between $t_{m}^{-}$ and $t_{m}^{+}$, the quantity $\theta$ varies between $-\pi/2$ and $\pi/2$; see again \eqref{eq: t+-m}. Using \eqref{eq: saddle point identity}, the equation $\Im f(s) = \Im f(s_{m})$ is equivalent to
\begin{equation}
\label{eq: steepest path m}
	\frac{1}{2}\frac{\tau^{1-(1+\delta)\sigma}}{\sigma^{2}+(t-\tau)^{2}}\bigl((t-\tau)\cos\theta + \sigma\sin\theta\bigr) = (t-t_{m})\log x - v_{m},
\end{equation}
where
\[
	v_{m}\coloneqq \Im \frac{\log x}{(1+\delta)\log\tau + \frac{1}{s_{m}-\I\tau}} = \log x\frac{\frac{t_{m}-\tau}{\sigma_{m}^{2}+(t_{m}-\tau)^{2}}}{\bigl((1+\delta)\log\tau + \frac{\sigma_{m}}{\sigma_{m}^{2}+(t_{m}-\tau)^{2}}\bigr)^{2} + \bigl(\frac{t_{m}-\tau}{\sigma_{m}^{2}+(t_{m}-\tau)^{2}}\bigr)^{2}}.
\]
Note that
\begin{equation}
\label{eq: bound v_{m}}
	v_{m} \ll m \frac{\log x}{(\log \tau)^{3}}.
\end{equation}
Denote the left hand side of \eqref{eq: steepest path m} by $l_{t}(\sigma)$, and the right hand side by $r_{t}$.

In Figure 1, the qualitative behavior of the steepest decent/ascent paths are depicted, as well as the relative positions of these paths with respect to some other points and curves. The points $t_{m}^{(1)}$ and $t_{m}^{(2)}$ are defined as the solution of $r_{t} = 0$, i.e., $t_{m}^{(1)}= t_{m} + v_{m}/\log x$, respectively $t_{m}^{(2)} = \tau+2\pi m/((1+\delta)\log\tau)$, corresponding to $\theta=0$. (Note that \eqref{eq: approx t} and \eqref{eq: bound v_{m}} imply $t_{m}^{-}<t_{m}<t_{m}^{(1)}<t_{m}^{(2)}<t_{m}^{+}$.) The blue line is the set of points for which $l_{t}(\sigma) = 0$.

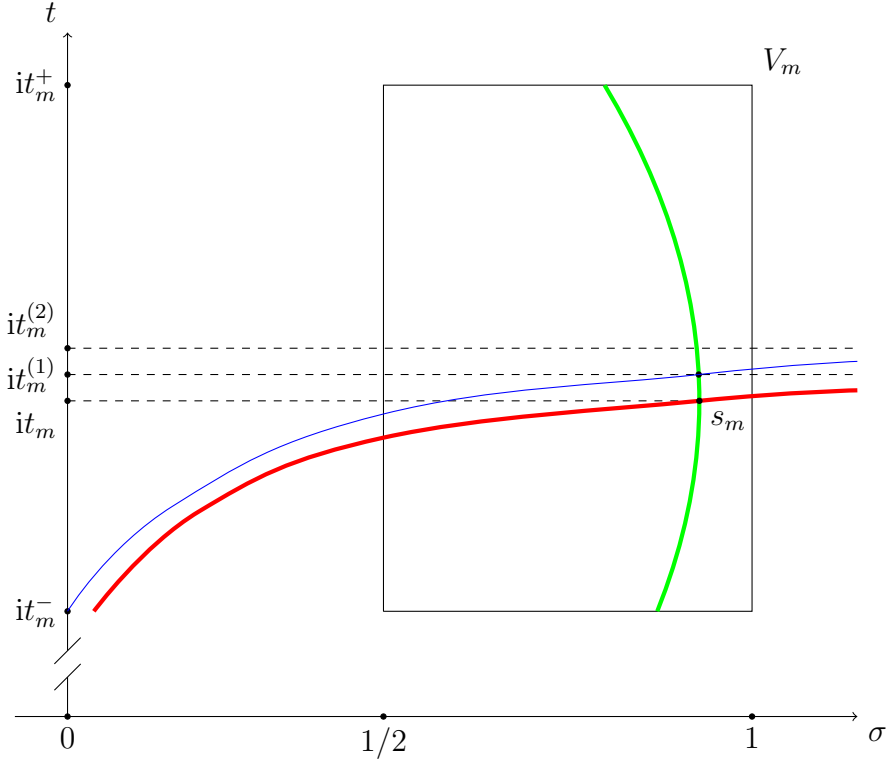
\begin{figure}
\begin{tikzpicture}[scale=0.7]
	\draw[->] (1,1) -- (1,12);
	\node[above left] at (1,12) {$t$};
	\draw[->] (0,-1) -- (16,-1);
	\node[below right] at (16,-1) {$\sigma$};	
	\draw[-] (1,-1) -- (1,-0.25);
	\draw[-] (0.75,-0.5) -- (1.25,0);
	\draw[-] (1,0.25) -- (1,1);
	\draw[-] (0.75, 0) -- (1.25,0.5);
	
	\draw[fill] (1,1) circle [radius=0.05];
	\node[left] at (1,1) {$\I t_{m}^{-}$};
	\draw[fill] (1,5) circle [radius=0.05];
	\node[below left] at (1,5) {$\I t_{m}$};
	\draw[dashed] (1,5) -- (12.975,5);
	\draw[fill] (1,5.5) circle [radius=0.05];
	\node[left] at (1,5.5) {$\I t_{m}^{(1)}$};
	\draw[dashed] (1,5.5) -- (16, 5.5);
	\draw[fill] (1,6) circle [radius=0.05];
	\node[above left] at (1,6) {$\I t_{m}^{(2)}$};
	\draw[dashed] (1,6) -- (16,6);
	\draw[fill] (1,11) circle [radius=0.05];
	\node[left] at (1,11) {$\I t_{m}^{+}$};
	\draw[fill] (1,-1) circle [radius=0.05];
	\node[below] at (1,-1) {$0$};
	\draw[fill] (14,-1) circle [radius=0.05];
	\node[below] at (14,-1) {$1$};
	\draw[fill] (7,-1) circle [radius=0.05];
	\node[below] at (7,-1) {$1/2$};
	
	 \draw (7,1) -- (14,1) -- (14,11) -- (7,11) -- (7,1);
	 \node[above right] at (14,11) {$V_{m}$};
	\draw[ultra thick, variable=\y, domain=1:11, green] plot({-(0.05)*(\y - 5)*(\y - 5)+13}, {\y});
	\draw[ultra thick, red] plot [smooth, tension=0.7] coordinates {(1.5,1) (3.5,2.9) (7, 4.3) (13,5) (16,5.2)};
	\draw[fill] (13, 5) circle [radius=0.05];
	\draw[fill] (12.9875,5.5) circle [radius=0.05];
	\node[below right] at (13, 5) {$s_{m}$};
	\draw[blue] plot [smooth, tension=0.75] coordinates {(1,1) (3,3) (7, 4.75)  (12.9875,5.5) (16,5.75)}; 
\end{tikzpicture}
\caption{Qualitative behavior of steepest paths for $m>0$ (green for descent, red for ascent). The blue line is given by $\sigma=(t-\tau)\cot \abs{\theta}$.}
\end{figure}

One can show the existence of the path of steepest descent by a careful analysis of \eqref{eq: steepest path m}. It is however more convenient to use the approximations for $f$ and $f'$ given in Lemma \ref{lem: approx f and f'} to show this existence, at least in a sufficiently small neighborhood of the saddle point. 
We have
\begin{align}
	f''(s_{m}) 	&= \log x\biggl((1+\delta)\log\tau + \frac{1}{s_{m}-\I\tau} + \frac{1}{(s_{m}-\I\tau)^{2}\bigl((1+\delta)\log\tau + \frac{1}{s_{m}-\I\tau}\bigr)}\biggr) \nonumber \\
			&= \log x\bigl((1+\delta)\log\tau+O(1)\bigr), \label{eq: f''}
\end{align}
so $\arg f''(s_{m}) \ll 1/\log\tau$. Now fix an $\eps>0$ sufficiently small\footnote{How small we need $\eps$ to be will be determined later, but it is important to note that $\eps$ and later also $\eta$ can be chosen independently of $\tau$ and $m$.} and write $s-s_{m} = r\e^{\I\phi}$. By Lemma \ref{lem: approx f and f'}, there are a function $\lambda_{m}$ and $\eta>0$ such that for $r<\eta/\log\tau$, we have
\[
	f(s) = f(s_{m}) + \frac{f''(s_{m})}{2}r^{2}\e^{2\I\phi}(1+\lambda_{m}(s)), \quad \abs{\lambda_{m}(s)} < \eps.
\]
Set $g_{m}(s)\coloneqq f''(s_{m})r^{2}\e^{2\I\phi}(1+\lambda_{m}(s))/2$. The path of steepest descent is given by $\Im g_{m}=0$ under the constraint $\Re g_{m}< 0$. Since
\[
	\Re g_{m}(s) = \frac{\abs{f''(s_{m})}}{2}r^{2}\bigl((1+\Re\lambda_{m}(s))\cos2\phi - (\Im \lambda_{m}(s))\sin2\phi + O(1/\log\tau)\bigr),
\]
we see that by choosing $\eps$ sufficiently small  and $\tau$ large enough we must necessarily have $\phi \in (-7\pi/8, -\pi/8) \cup (\pi/8, 7\pi/8)$ in order to satisfy the condition $\Re g_{m}(s) < 0$. On the other hand,
\[
	\Im g_{m}(s) = \frac{\abs{f''(s_{m})}}{2}r^{2}\bigl((1+\Re \lambda_{m}(s))\sin2\phi + (\Im\lambda_{m}(s))\cos2\phi + O(1/\log\tau)\bigr).
\]
For each $r<\eta/\log\tau$, there is at least one solution $\phi$ near $\pi/2$ and at least one solution near $-\pi/2$ of the equation $\Im g_{m}(r\e^{\I\phi})=0$. For example, by selecting $\eps$ sufficiently small  and $\tau$ large enough, there is a solution for $\phi$ in $(2\pi/5, 3\pi/5)$ and in $(-3\pi/5, -2\pi/5)$.

This guarantees the existence of the path of steepest descent in the range $\abs{\theta}\le \eta/2$ say (since $t_{m} = t_{m}^{(2)}+O(1/(\log\tau)^{5/4})$). Note that this part of the path lies inside the rectangle $V_{m}$, since $B(s_{m}, \eta/\log\tau) \subseteq V_{m}$ for sufficiently large $\tau$. When $\theta>\eta/2$, $r_{t}$ is positive; hence, by monotonicity of $l_{t}(\sigma)$ and calculating the values at $\sigma=1/2$ and $\sigma=1$, we deduce that for every $\theta\in (\eta/2, \pi/2]$ there is a unique solution $\sigma \in (1/2, 1)$ of  \eqref{eq: steepest path m}. We conclude that we can extend the path of steepest descent from $\theta=\eta/2$ upwards to $\theta=\pi/2$, corresponding to $t=t_{m}^{+}$.

It remains to treat the case $\theta <-\eta/2$. Then $r_{t}<0$, so that \eqref{eq: steepest path m} can only have solutions when $\sigma > (t-\tau)\cot\abs{\theta}$, and we remark that for $\theta < -\eta/2$ and sufficiently large $\tau$, $(t-\tau)\cot\abs{\theta} < 1/2$. Since $l_{t}((t-\tau)\cot \abs{\theta})=0$ and $l_{t}(\sigma) \to 0$ as $\sigma\to\infty$, $l_{t}$ has at least one (local) minimum for $\sigma> (t-\tau)\cot\abs{\theta}$. In fact, $l_{t}$ has precisely one minimum: 
\begin{equation*}
l_{t}'(\sigma) = \frac{1}{2}\frac{\tau^{1-(1+\delta)\sigma}}{\sigma^{2}+(t-\tau)^{2}}(\sin\theta - \psi_{t}(\sigma)),
\end{equation*}
with
\[
	\psi_{t}(\sigma) \coloneqq \bigl((t-\tau)\cos\theta + \sigma\sin\theta\bigr)\biggl((1+\delta)\log\tau+\frac{2\sigma}{\sigma^{2}+(t-\tau)^{2}}\biggr).
\]
Now 
\[
	\psi'_{t}(\sigma) = \sin\theta\biggl((1+\delta)\log\tau + \frac{2\sigma}{\sigma^{2}+(t-\tau)^{2}}\biggr) + \bigl((t-\tau)\cos\theta + \sigma\sin\theta\bigr)\frac{2(\sigma^{2}+(t-\tau)^{2})-4\sigma^{2}}{(\sigma^{2}+(t-\tau)^{2})^{2}},
\]
and the second term is bounded in absolute value by 
\[
	\frac{\abs{2\sigma^{2}-2(t-\tau)^{2}}}{(\sigma^{2}+ (t-\tau)^{2})^{2}}2\sigma\abs{\sin\theta}, \quad \text{if } \sigma \ge (t-\tau)\cot\abs{\theta}.
\]
This implies that $\psi_{t}$ is monotonically decreasing for $\sigma > (t-\tau)\cot\abs{\theta}$, so that $l_{t}$ has a unique local extremum in this range, which must be a minimum. Now $\abs{r_{t}} \asymp_{\eta} \log x/\log\tau$, while $\abs{l_{t}(1/2)} \gg_{\eta} \tau^{(1-\delta)/2}$, so that for each fixed $\theta < -\eta/2$, \eqref{eq: steepest path m} has two distinct solutions for $\sigma$, one on either side of $1/2$. In particular\footnote{The solution to the left of $1/2$ corresponds to the path of steepest ascent.}, we can extend the path of steepest descent from $\theta=-\eta/2$ (where it was on the right of $1/2$) downwards to $\theta=-\pi/2$, corresponding to $t=t_{m}^{-}$. This path cannot cross the line $\sigma=1$, since $\abs{l_{t}(1)} \ll 1/\log\tau$, which is of strictly lower order than $r_{t}$.

\bigskip

Now that we have shown the existence of the paths of steepest descent, we will also deduce some information about the argument of the tangent vector along these paths. Denote the steepest path by $\Gamma_{m}$, and let $\gamma_{m}\colon [y_{m}^{-}, y_{m}^{+}] \to \Gamma_{m}$ be a unit speed parametrization of $\Gamma_{m}$ (i.e. $\abs{\gamma'_{m}}=1$) with $\Im\gamma_{m}(y_{m}^{\pm}) = t_{m}^{\pm}$ and $\gamma_{m}(0) = s_{m}$. We have that $\gamma'_{m}(y)$ is a positive multiple of $\nabla \Re f(\gamma_{m}(y)) = \overline{f'}(\gamma_{m}(y))$ for $y<0$, while $\gamma'_{m}(y)$ is a negative multiple of $\nabla \Re f(\gamma_{m}(y)) = \overline{f'}(\gamma_{m}(y))$ for $y>0$.

Write again $s-s_{m} = r\e^{\I\phi}$. From the above discussion, we know that for $\abs{\theta}<\eta/2$ this path lies in the cone $\phi \in (-3\pi/5, -2\pi/5) \cup (2\pi/5, 3\pi/5)$. By appealing to Lemma \ref{lem: approx f and f'} once more, we see that there exists a function $\tilde{\lambda}_{m}$ such that  
\[
	f'(s) = f''(s_{m})r\e^{\I\phi}(1+ \tilde{\lambda}_{m}(s))
\]
with $\abs[0]{\tilde{\lambda}_{m}(s)} < \eps$ for $r<\eta/\log \tau$. From this we deduce that $\abs[1]{\arg(\e^{-\I\pi/2} \gamma'_{m}(y))} < \pi/5$ in the range $\abs{\theta}<\eta/2$, provided we choose $\eps$ sufficiently small and $\tau$  sufficiently large.

For the range $\eta/2<\abs{\theta}\leq \pi /2$, we estimate the argument of $\overline{f'}$ directly. We have
\[
	\overline{f'}(s) = \log x + \frac{1}{2}\frac{\tau^{1-(1+\delta)\sigma}}{\sigma^{2}+(t-\tau)^{2}}(A-\I B),
\]
with
\begin{align*}
A 	&\coloneqq  \bigl((t-\tau)\sin\theta - \sigma\cos\theta\bigr)\biggl((1+\delta)\log\tau + \frac{\sigma}{\sigma^{2}+(t-\tau)^{2}}\biggr) + \frac{\bigl((t-\tau)\cos\theta + \sigma\sin\theta\bigr)(t-\tau)}{\sigma^{2}+(t-\tau)^{2}}, \\
B	&\coloneqq \bigl((t-\tau)\cos\theta + \sigma\sin\theta\bigr)\biggl((1+\delta)\log\tau + \frac{\sigma}{\sigma^{2}+(t-\tau)^{2}}\biggr) + \frac{\bigl(\sigma\cos\theta -(t-\tau)\sin\theta\bigr)(t-\tau)}{\sigma^{2}+(t-\tau)^{2}}.
\end{align*}
Whence we see that (with $\gamma_{m}(y) = \sigma+\I t$)
\[
	\arg(\e^{-\I\pi/2}\gamma'_{m}(y)) = \arctan\biggl(\frac{\log x \cdot 2\frac{\sigma^{2}+(t-\tau)^{2}}{\tau^{1-(1+\delta)\sigma}} + A}{B}\biggr).
\]
The relation \eqref{eq: steepest path m} holds on the path of steepest descent, so
\[
	\arg(\e^{-\I\pi/2}\gamma'_{m}(y)) = \arctan\biggl(\frac{\frac{(t-\tau)\cos\theta + \sigma\sin\theta}{(t-t_{m})- v_{m}/\log x} + A}{B}\biggr).
\]
Now by \eqref{eq: bound v_{m}}, $v_{m}/\log x \ll (\log\tau)^{-9/4}$ and $t-t_{m} = \theta/((1+\delta)\log\tau) + O(m(\log\tau)^{-2})$, so that, for $\eta/2<\abs{\theta}\leq \pi /2$,
\begin{align*}
	\frac{(t-\tau)\cos\theta+\sigma\sin\theta}{(t-t_{m})-v_{m}/\log x} &= \frac{(1+\delta)\log\tau}{\theta}\bigl((t-\tau)\cos\theta+\sigma\sin\theta\bigr) + O_{\eta}((\log\tau)^{3/4})
	\\
	&
	=\frac{\sigma (1+\delta) \log\tau\sin\theta}{\theta} + O_{\eta}((\log\tau)^{3/4}).
\end{align*}
Furthermore, we have
\[
A= -\sigma(1+\delta)\log\tau\cos\theta + O((\log\tau)^{3/4}) \qquad \mbox{and} \qquad B= \sigma(1+\delta)\log\tau\sin\theta + O((\log\tau)^{3/4}),
\]
so 
\begin{align*}
	\arg(\e^{-\I\pi/2}\gamma'_{m}(y)) 	&= \arctan\biggl(\frac{\frac{\sin\theta}{\theta} - \cos\theta + O_{\eta}((\log\tau)^{-1/4})}{\sin\theta + O((\log\tau)^{-1/4})}\biggr) \\
								&= \arctan\bigl( 1/\theta - \cot\theta + O_{\eta}((\log\tau)^{-1/4})\bigr).
\end{align*}
Now, $1/\theta-\cot\theta$ is bounded on $[-\pi/2, \pi/2]$ with $\abs{1/\theta-\cot\theta} \le 2/\pi$, so we see that\footnote{$\arctan(2/\pi) \approx 0.18\pi$.} 
\[
	|\arg(\e^{-\I\pi/2}\gamma'_{m}(y))| \le \pi/5
\]
on the range $\eta/2<\abs{\theta}\leq \pi /2$, and combining this with the estimate above, we see that this inequality holds in the entire range $\abs{\theta}\le\pi/2$.

\subsection{The contribution from the saddle points}
 \label{subsec: The contribution from the saddle points} We will now estimate the contribution to the Perron integral in \eqref{eq: Perron formula} coming from the integrals over the paths of steepest descent $\Gamma_m$. Since we take the imaginary part of this Perron integral, we need to control the argument of $\int_{\Gamma_{m}}$, and see that this is close to $\pi/2$, or at least sufficiently far from 0 and $\pi$. For this we use the following lemma.
\begin{lemma}
\label{lem: estimation integral}
Let $a<b$ and suppose that $g\colon [a,b] \to \C$ is integrable. If there exist $\theta_{0}$ and $\eta$ with $0\le\eta<\pi/2$ such that $\abs{\arg(g\e^{-\I\theta_{0}})}\le\eta$, then 
\[	
	\int_{a}^{b} g(y)\dif y = \rho \e^{\I (\theta_{0}+\vphi)}
\]
for some real numbers $\rho$ and $\vphi$ satisfying 
\[
	\rho \ge (\cos\eta) \int_{a}^{b}\abs{g(y)}\dif y \quad\mbox{and}\quad \abs{\vphi}\le\eta.
\]
\end{lemma}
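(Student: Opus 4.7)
The plan is to exploit convexity: the hypothesis says the values of $g$ lie almost everywhere in the closed convex cone obtained by rotating $\{z\in\C : |\arg z|\le\eta\}$ by $\theta_{0}$, and any average of elements of a closed convex cone stays in that cone.

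First I would normalize by replacing $g$ by $\tilde{g}(y) \coloneqq \e^{-\I\theta_{0}} g(y)$. This leaves $|g|$ invariant and multiplies the integral by $\e^{-\I\theta_{0}}$, so it suffices to prove the special case $\theta_{0}=0$: if $|\arg \tilde{g}(y)|\le\eta$ a.e., then $\int_{a}^{b}\tilde{g}(y)\dif y = \rho\e^{\I\vphi}$ with $\rho\ge (\cos\eta)\int_{a}^{b}|\tilde{g}(y)|\dif y$ and $|\vphi|\le \eta$. Rotating back at the end yields the statement as written.

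Next, I would translate the pointwise cone condition into two real inequalities. Since $|\arg \tilde{g}(y)|\le \eta<\pi/2$ a.e., we have $\Re \tilde{g}(y) \ge (\cos\eta)\,|\tilde{g}(y)| \ge 0$ and $|\Im \tilde{g}(y)| \le (\tan\eta)\,\Re \tilde{g}(y)$. Integrating these two inequalities over $[a,b]$, and using the standard bound $\bigl|\int \Im \tilde{g}\bigr| \le \int |\Im \tilde{g}|$, gives
\[
\Re \int_{a}^{b}\tilde{g}(y)\dif y \ge (\cos\eta)\int_{a}^{b}|\tilde{g}(y)|\dif y, \qquad \biggl|\Im \int_{a}^{b}\tilde{g}(y)\dif y\biggr| \le (\tan\eta)\,\Re \int_{a}^{b}\tilde{g}(y)\dif y.
\]
Writing $\int_{a}^{b}\tilde{g}\dif y = \rho \e^{\I\vphi}$ in polar form with $\rho\ge 0$ and $\vphi\in(-\pi/2,\pi/2]$, the second inequality yields $|\tan \vphi|\le \tan\eta$, so $|\vphi|\le\eta$, while the first yields $\rho \ge \Re \int_{a}^{b}\tilde{g}\dif y \ge (\cos\eta)\int_{a}^{b}|\tilde{g}|\dif y$. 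Undoing the rotation $\tilde{g}\mapsto g$ gives $\int_{a}^{b}g = \rho\e^{\I(\theta_{0}+\vphi)}$ with the claimed bounds.

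There is essentially no obstacle here; the proof is a couple of lines and the content is exactly the elementary fact that a closed convex cone is preserved under integration. The only thing to keep straight is the polar decomposition convention so that $\rho$ ends up nonnegative, which is automatic from $\Re \int \tilde{g}\ge 0$.
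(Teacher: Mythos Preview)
Your proof is correct and is essentially the same as the paper's: both rotate by $\e^{-\I\theta_{0}}$ and then bound the real and imaginary parts of the resulting integral using $\Re\tilde g \ge (\cos\eta)|\tilde g|$ and $|\Im\tilde g|\le (\tan\eta)\Re\tilde g$ (equivalently, the paper writes $g=R\e^{\I\theta}$ and uses $\cos(\theta-\theta_{0})\ge\cos\eta$, $|\sin(\theta-\theta_{0})|\le\sin\eta$), concluding $|\tan\varphi|\le\tan\eta$ and $\rho\ge(\cos\eta)\int|g|$. The only cosmetic difference is that you phrase it as preservation of a closed convex cone under integration, which is a nice way to say it.
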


\begin{proof}
Assume that $g$ is not identically zero (that case is trivial) and write $g(y) = R(y)\e^{\I\theta(y)}$ with $\abs{\theta(y)-\theta_{0}}\le\eta$. Then,
\[
	\int_{a}^{b}g(y) \dif y = \e^{\I\theta_{0}}\biggl(\int_{a}^{b}R(y)\cos\bigl(\theta(y)-\theta_{0}\bigr)\dif y + \I \int_{a}^{b}R(y)\sin\bigl(\theta(y)-\theta_{0}\bigr)\dif y\biggr).
\]
The modulus of this expression is larger than 
\[
	\int_{a}^{b}R(y)\cos\eta\dif y,
\]
while 
\[
	\abs{\vphi} = \arctan\abs{\frac{\int_{a}^{b} R(y)\sin\bigl(\theta(y)-\theta_{0}\bigr)\dif y}{\int_{a}^{b} R(y)\cos\bigl(\theta(y)-\theta_{0}\bigr)\dif y}} \le \arctan\frac{\sin \eta}{\cos\eta} = \eta.
\]
\end{proof}
We have that 
\[
	\int_{\Gamma_{m}}\e^{f(s)}\frac{\exp\Bigl(\sideset{}{'}\sum\dotso\Bigr)}{(s-1)(s+1)} \dif s = \e^{f(s_{m})} \int_{y_{m}^{-}}^{y_{m}^{+}}\e^{f(\gamma_{m}(y)) - f(s_{m})}\frac{\exp\Bigl(\sideset{}{'}\sum\dotso\Bigr)}{(\gamma_{m}(y)-1)(\gamma_{m}(y)+1)}\gamma'_{m}(y)\dif y.
\]

Let us first focus on the argument of this integral. Using \eqref{eq: saddle point identity}, 
\[
	f(s_{m}) = (1+\sigma_{m})\log x + \I t_{m}\log x + \frac{\log x}{(1+\delta)\log\tau + \frac{1}{s_{m}-\I\tau}},
\]
whence, in view of \eqref{eq: approx t} and the bound on $v_{m}$ \eqref{eq: bound v_{m}}, we obtain
\[
	\Im f(s_{m}) = \log x\biggl( \tau + \frac{2\pi m}{(1+\delta)\log\tau}\biggl(1 - \frac{1+\sqrt{2}\sqrt{\frac{\log\log x}{\log x}}}{(1+\delta)\log\tau}\biggr)\biggr) 
	+ O\biggl(\frac{|m|\log x}{(\log\tau)^{3}} + \frac{|m|^{3}\log x}{(\log\tau)^{4}}\biggr). 
\]
By the technical assumptions imposed on the sequence $(\tau_{j})_{j}$, more specifically properties \ref{Property (c)} and \ref{Property (d)}, we see that the main term of the above expression has distance at most $\pi/16$ from an even multiple of $\pi$ when $k$ is even or from an odd multiple of $\pi$ when $k$ is odd. Furthermore, by restricting the range for $m$ to $\abs{m} \le c(\log x)^{1/3}(\log\log x)^{2/3}$ for some sufficiently small absolute constant $c>0$, we force the error term to be at most $\pi/16$. Hence, for $\abs{m} \le c(\log x)^{1/3}(\log\log x)^{2/3}$,
\begin{equation*}
d\bigl(\Im f(s_m), 2\Z\pi\bigr)<  \frac{\pi}{8} \text{ for } k \text{ even} \qquad\text{ and }\qquad   d\bigl(\Im f(s_m), \pi+ 2\Z\pi\bigr) < \frac{\pi}{8} \text{ for } k \text{ odd}.
\end{equation*}
Second, in the range $\abs{t-\tau_{k}}\le 1$, $1/2\le\sigma\le1$ say, we have by property \ref{Property (a)} that
\[
	\sideset{}{'}\sum = \frac{1}{2}\sum_{j\neq k}\tau_{j}\frac{\tau_{j}^{-(1+\delta_{j})s}-\tau_{j}^{-\nu_{j}s}}{s-\I\tau_{j}}+\frac{1}{2}\sum_{j=0}^{\infty}\tau_{j}\frac{\tau_{j}^{-(1+\delta_{j})s}-\tau_{j}^{-\nu_{j}s}}{s+\I\tau_{j}} \ll \sum_{j=0}^{\infty}\tau_{j}^{-1/2}.
\]
We can make this smaller than $\pi/16$ by choosing $\tau_{0}$ sufficiently large.
Third, in the same range we have
\[
	\frac{1}{\abs{(s-1)(s+1)}} 			= \frac{1}{\tau_{k}^{2}} + O\biggl(\frac{1}{\tau_{k}^{3}}\biggr), \quad
	\arg\biggl(\frac{1}{(s-1)(s+1)}\biggr)	= \pi + O\biggl(\frac{1}{\tau_{k}}\biggr). 					
\]
Finally, by the results of the previous subsection, $\abs[0]{\arg(\e^{-\I\pi/2}\gamma'_{m}(y))} < \pi/5$. Combining all of this, and using the fact that on $\Gamma_{m}$, $f(s)-f(s_{m})$ is real, we get by Lemma \ref{lem: estimation integral} that 
\[
	\int_{\Gamma_{m}}\e^{f(s)}\frac{\exp\Bigl(\sideset{}{'}\sum\dotso\Bigr)}{(s-1)(s+1)} \dif s = (-1)^{k+1}R_{m}\e^{\I(\pi/2 + \vphi_{m})},
\]
where $\abs{\vphi_{m}} < 2\pi/5$ and $R_{m}$ is a positive number satisfying
\[
	R_{m} \gg \frac{\e^{\Re f(s_{m})}}{\tau^{2}}\int_{y_{m}^{-}}^{y_{m}^{+}}\exp\bigl(\Re (f(\gamma_{m}(y)) - f(s_{m}))\bigr)\dif y.
\]

We see that the imaginary part of the integral over $\Gamma_{m}$ always has sign $(-1)^{k+1}$, so the sum over $m$ of the imaginary parts of all these integrals has also sign $(-1)^{k+1}$ and is in absolute value larger than the contribution of the integral over $\Gamma_{0}$. Therefore, the saddle point contribution of the integrals $\Gamma_m$, $m\neq0$, cannot destroy the contribution of $\Gamma_0$, and as we have better control over  $\sigma_0$ compared to general $\sigma_m$ (\eqref{eq: approx sigma_{0}} versus \eqref{eq: approx sigma}), we will proceed to get a lower bound for the contribution of the saddle points    only\footnote{With some more calculations, one can show that the size of the contributions of the saddle points $s_{m}$ to the Perron integral decays relatively slowly with respect to $m$. Therefore, if the phases of the contributions were arbitrary, the contribution of the saddle point $s_{0}$ might get cancelled by the others. As a consequence we were led to this intricate analysis of the phases of the contributions.} using the integral $\Gamma_0$.
To estimate $R_{0}$, we restrict the integration interval to the interval which corresponds to the part of $\Gamma_{0}$ which lies inside $B(s_{0}, \eta/\log\tau)$ for some $\eta>0$. By Lemma \ref{lem: approx f and f'} and \eqref{eq: f''}, we see that
\[
	\int_{y_{0}^{-}}^{y_{0}^{+}}\exp\bigl(\Re (f(\gamma_{0}(y)) - f(s_{0}))\bigr)\dif y \gg \frac{1}{\sqrt{\log x\log\tau}}.
\]
Now by \eqref{eq: saddle point identity}, we have
\begin{equation}
\label{eq: power of tau}
	\frac{1}{2}\frac{\tau^{1-(1+\delta)\sigma_{0}}}{\sigma_{0}} = \sqrt{2}\sqrt{\frac{\log x}{\log\log x}} + O(1),
\end{equation}
and by using \eqref{eq: approx sigma_{0}}, we see that
\[
	\frac{\e^{\Re f(s_{0})}}{\tau^{2}} = \exp\biggl( 2\log x - \sqrt{2}\sqrt{\log x\log\log x} - \sqrt{2}(a+\log 2-1)\sqrt{\frac{\log x}{\log\log x}} - 2\log \tau + O(\log\log x)\biggr).
\]

Hence, we can conclude that the contribution of saddle points with $|m| \leq c(\log x)^{1/3} (\log \log x)^{2/3}$ has sign $(-1)^{k+1}$ and is bounded in absolute value from below as follows:
\begin{align}
	&\abs{\frac{1}{\pi}\Im \sum_{m} \int_{\Gamma_{m}}\e^{f(s)}\frac{\exp\Bigl(\sideset{}{'}\sum\dotso\Bigr)}{(s-1)(s+1)} \dif s}\nonumber \\ 	
			&\gg \exp\biggl((1+\sigma_{0})\log x - \sqrt{2}\sqrt{\log x\log\log x} + \sqrt{2}\sqrt{\frac{\log x}{\log\log x}} + O(\log\log x)\biggr) \label{eq: contribution s_{0}}\\
			&= x^{2}\exp\biggl(-2\sqrt{2}\sqrt{\log x\log\log x} - \sqrt{2}(a+\log 2-1)\sqrt{\frac{\log x}{\log\log x}}  + O(\log\log x) \biggr). \label{eq: contribution saddle points}
\end{align}

\section{The remainder}
\label{sec: The remainder}
Recall that the Perron integral we are considering is given by 
\begin{equation}
\label{eq: Perron integral}
	\int \e^{f(s)}\frac{\exp\left(\sideset{}{'}\sum\dotso\right)}{(s-1)(s+1)}\dif s.	
\end{equation}
In this section we will show that one can integrate over a contour $\Gamma$ which incorporates the paths of steepest descent $\Gamma_{m}$, in such a way that the integral over $\Gamma\setminus\cup_{m}\Gamma_{m}$ is of strictly lower order than the contribution of the saddle points. For brevity, we will omit the integrand when writing integrals. The omitted integrand is always meant to be the integrand of \eqref{eq: Perron integral}. Let us begin by bounding the zeta function on the contour of Hilberdink and Lapidus.
\begin{lemma}
\label{lem: bound zeta on GHL}
For $t\ge \e^{\e}$, the function 
\begin{gather*}
	\sum_{j=0}^{\infty}\frac{\tau_{j}}{2}\biggl( \frac{\tau_{j}^{-(1+\delta_{j})s} - \tau_{j}^{-\nu_{j}s}}{s - \I\tau_{j}} + \frac{\tau_{j}^{-(1+\delta_{j})s} - \tau_{j}^{-\nu_{j}s}}{s + \I\tau_{j}}\biggr), \quad \text{resp. }\\
	\sum_{j\neq k}\frac{\tau_{j}}{2}\frac{\tau_{j}^{-(1+\delta_{j})s} - \tau_{j}^{-\nu_{j}s}}{s - \I\tau_{j}} + \sum_{j=0}^{\infty}\frac{\tau_{j}}{2}\frac{\tau_{j}^{-(1+\delta_{j})s} - \tau_{j}^{-\nu_{j}s}}{s + \I\tau_{j}},
\end{gather*}
is bounded in the region
\[
	\biggl\{ \, s=\sigma+\I t : \sigma \ge 1 - \frac{\log\log t}{\log t}\, \biggr\}, \quad \text{resp.\ }\quad  \{\, s=\sigma+\I t: \sigma\ge1/2, \tau_{k}^{1/5} \le t \le\tau_{k}^{5}\,\}.
\]
\end{lemma}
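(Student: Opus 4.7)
The plan is to bound each summand
\[
\abs{\frac{\tau_j}{2}\cdot\frac{\tau_j^{-(1+\delta_j)s}-\tau_j^{-\nu_j s}}{s\pm \I \tau_j}} \le \frac{\tau_j^{1-(1+\delta_j)\sigma} + \tau_j^{1-\nu_j \sigma}}{2\abs{s \pm \I \tau_j}},
\]
and to split the sum in $j$ according to whether $\tau_j$ is much smaller than, comparable to, or much larger than $t$. The $\tau_j^{-\nu_j s}$ part is negligible compared to the $\tau_j^{-(1+\delta_j) s}$ part because $\nu_j \ge 2$, so I focus on the latter. The crucial structural input is Property \ref{Property (a)} ($\tau_{j+1} > (2\tau_j)^5$), which guarantees that at most one $\tau_j$ is comparable to $t$ and that the tail sums over very small and very large $\tau_j$ are dominated by geometric series.

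For the first statement, on the curve $\sigma \ge 1 - \log\log t/\log t$, I split at $\tau_j = t/2$ and $\tau_j = 2t$. When $\tau_j \le t/2$ the denominator is at least $t/2$, and one has $\tau_j^{1-\sigma} \le t^{\log\log t/\log t} = \log t$; combined with $\tau_j^{-\delta_j \sigma} \le (6\log\tau_j)^{-1/2}$, which uses $a_j \ge \log 6$ and $\sigma \ge 1/2$, each such term is $\ll \log t/(t\sqrt{\log\tau_j})$. Property \ref{Property (a)} bounds the number of such indices by $O(\log\log t)$, so the contribution is $o(1)$. When $\tau_j \ge 2t$ the denominator exceeds $\tau_j/2$, so the term is at most $2\tau_j^{-(1+\delta_j)\sigma}\le 2\tau_j^{-1/2}$, and the corresponding sum is $O(1)$ by rapid geometric convergence. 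For the second statement, the restriction $\tau_k^{1/5}\le t \le \tau_k^5$ together with Property \ref{Property (a)} forces every $\tau_j$ with $j\ne k$ to satisfy either $\tau_j \le \tau_{k-1} < \tau_k^{1/5}/2 \le t/2$ (when $j<k$) or $\tau_j \ge \tau_{k+1} > 32\tau_k^5 \ge 32 t$ (when $j>k$). Hence the delicate comparable case does not arise and the weaker hypothesis $\sigma \ge 1/2$ suffices throughout. The remaining term $j=k$ appears only in the $s+\I \tau_j$-sum, where $\abs{s + \I \tau_k} \ge \tau_k$ yields a bound $\ll \tau_k^{-1/2}$.

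The main obstacle is the comparable case $t/2 < \tau_j < 2t$ for the first statement. Here $(1+\delta_j)\sigma$ is only barely greater than $1$, so the ratio $\tau_j^{1-(1+\delta_j)\sigma}/\abs{s \pm \I\tau_j}$ does not automatically go to zero. Using $\log\tau_j = \log t + O(1)$ and a short Taylor expansion on the line $\sigma = 1 - \log\log t/\log t$ yields
\[
(1+\delta_j)\sigma - 1 \ge \frac{a_j}{\log t} + O\biggl(\frac{(\log\log t)^2}{(\log t)^2}\biggr),
\]
so that the choice $a_j \ge \log 6$ gives $\tau_j^{(1+\delta_j)\sigma - 1} \ge 6 - o(1)$ and hence $\tau_j^{1-(1+\delta_j)\sigma} \le 1/6 + o(1)$. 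Together with $\abs{s \pm \I \tau_j} \ge \sigma \ge 1/2$ and the fact that Property \ref{Property (a)} admits at most one such index, the comparable case contributes $O(1)$. It is precisely this built-in lower bound $a_j \ge \log 6$ in the construction that makes the first statement's bound work on the Hilberdink-Lapidus contour, where $\sigma$ is barely above the level that the convexity bound permits.
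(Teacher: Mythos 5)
Your proof is correct and takes essentially the same route as the paper: termwise absolute-value estimates together with a split according to whether $\tau_{j}$ is small, comparable, or large relative to $t$, with property \ref{Property (a)} providing both the separation of the $\tau_{j}$ (at most one comparable index, none in the second region) and the convergence of the tail sums. The only cosmetic difference is that the paper gets the numerator bound $\tau_{j}^{1-(1+\delta_{j})\sigma}\ll 1$ for all $\tau_{j}\le 2t$ in one stroke from the monotonicity of $\log\log t/\log t$, whereas you isolate the comparable range and use a Taylor expansion exploiting $a_{j}\ge\log 6$, handling the range $\tau_{j}\le t/2$ by the cruder $\log t/t$ decay.
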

\begin{proof}
For $\sigma\ge1-\log\log t/\log t$, we have
\begin{align*}
\tau_{j}^{1-(1+\delta_{j})\sigma} 	&\le \exp\biggl( \log\tau_{j}\biggl( -\frac{\log\log\tau_{j} + a_{j}}{\log\tau_{j}} + \frac{\log\log t}{\log t} + \frac{\log\log\tau_{j} + a_{j}}{\log\tau_{j}}\frac{\log\log t}{\log t}\biggr)\biggr)\\
							&= \exp\biggl(-\log\log\tau_{j}-a_{j} + \frac{\log\log t}{\log t}\bigl( \log\tau_{j} + \log\log\tau_{j} + a_{j}\bigr)\biggr). 
\end{align*}
Since $\log\log t/\log t \le 1/2$, this is $\ll \sqrt{\tau_{j}/\log\tau_{j}}$. Furthermore, if $\tau_{j}/2\le t$, the above is 
\[
	\ll \exp\biggl(-\log\log\tau_{j} + \frac{\log(\log\tau_{j} - \log 2)}{\log\tau_{j}-\log2}\log\tau_{j} + O(1)\biggr) \ll 1.
\]
Also, for $\sigma\ge1/2$, $\tau_{j}^{1-(1+\delta_{j})\sigma} \le \tau_{j}^{2/3}$ say.
For the first case, we bound $\abs{s-\I\tau_{j}}$ from below by $1/2$ if $\tau_{j}/2\le t \le 2\tau_{j}$ and by $\tau_{j}/2$ otherwise, while for the second case, we bound $\abs{s-\I\tau_{j}}$, $j\neq k$, from below by $\tau_{j}/2$, in view of property \ref{Property (a)}. Hence, we see that the functions are 
\[
	\ll \Biggl(\sum_{j=0}^{\infty}\sqrt{\frac{1}{\tau_{j}\log\tau_{j}}}\, \Biggr) + 1, \quad \text{resp.} \quad \sum_{j=0}^{\infty}\frac{1}{\tau_{j}^{1/3}},
\]
which are bounded (by property \ref{Property (a)}).
\end{proof}
The lemma implies that we may indeed shift the contour from the line $\sigma = \kappa$ to the Hilberdink-Lapidus contour $\GHL$ as described in Section \ref{sec: Setup and overview}. Indeed, the integral over the segments 
\[
	\biggl[ 1-\frac{\log\log\abs{T}}{\log\abs{T}} \pm \I T, \kappa\pm\I T\biggr]
\]
is $O_{\kappa}(x^{\kappa+1}/T^{2})$, which tends to 0 as $T\to \infty$.

\subsection{Connecting the steepest paths}
Set $\mmax \coloneqq \lfloor c(\log x)^{1/3}(\log\log x)^{2/3}\rfloor$, and 
\[
	T_{1}^{\pm} \coloneqq \tau \pm \frac{2\pi \mmax + \pi/2}{(1+\delta)\log\tau} = t_{\pm\mmax}^{\pm}.
\]
In this subsection we will show that one can connect the different paths of steepest descent $\Gamma_{m}$ to form one contour whose imaginary part ranges from $T_{1}^{-}$ to $T_{1}^{+}$.

For $m$ in the range $\abs{m} \le c(\log x)^{1/3}(\log\log x)^{2/3}$, set $\sigma_{m}^{\pm} \coloneqq \Re \gamma_{m}(y_{m}^{\pm})$. By \eqref{eq: steepest path m} (the value of $\theta$ is here either $\pi/2$ or $-\pi/2$ correspondingly, cf. \eqref{eq: t+-m}), we see that these numbers satisfy
\[
	\sigma_{m}^{\pm} = \frac{1}{1+\delta}
	\biggl(1 - \frac{1}{\log\tau}\biggl(\log\log x + \log 2 +  \log\bigl((\sigma_{m}^{\pm})^{2} + (t_{m}^{\pm}-\tau)^{2}\bigr) + \log\biggl(\frac{(t_{m}^{\pm}-t_{m}) - v_{m}/\log x}{\pm\sigma_{m}^{\pm}}\biggr)\biggr)\biggr).
\]
Using that 
\[
	1/2<\sigma_{m}^{\pm}<1, \quad t_{m}^{\pm}-t_{m} = \pm\frac{\pi}{2}\frac{1}{(1+\delta)\log\tau} + O\biggl(\frac{(\log\log\tau)^{1/3}}{(\log\tau)^{4/3}}\biggr), \quad \frac{v_{m}}{\log x} \ll \frac{1}{(\log\tau)^{9/4}}, 
\]
we get 
\begin{equation}\label{eq: sigma_{m}^{pm}}
	\sigma_{m}^{\pm} 	= 1-\sqrt{2}\sqrt{\frac{\log\log x}{\log x}} - \frac{\sqrt{2}(a+\log 2+ \log \pi/2)}{\sqrt{\log x\log\log x}} + O\biggl(\frac{1}{(\log x)^{2/3}(\log\log x)^{1/3}}\biggr).
	\end{equation}
Consider now the contour $\Upsilon_{m}$ which connects $\sigma_{m}^{+}+\I t_{m}^{+}$ with $\sigma_{m+1}^{-}+\I t_{m+1}^{-}$ via a vertical and horizontal line: $\Upsilon_{m} \coloneqq [\sigma_{m}^{+}+\I t_{m}^{+}, \sigma_{m}^{+}+\I t_{m+1}^{-}] \cup [\sigma_{m}^{+}+\I t_{m+1}^{-},\sigma_{m+1}^{-}+\I t_{m+1}^{-}]$.
We have 
\[
	\Re \frac{1}{2}\frac{\tau^{1-(1+\delta)s}}{s-\I\tau} = \frac{1}{2}\frac{\tau^{1-(1+\delta)\sigma}}{\sigma^{2}+(t-\tau)^{2}}\bigl( \sigma\cos((t-\tau)(1+\delta)\log\tau) - (t-\tau)\sin((t-\tau)(1+\delta)\log\tau)\bigr).
\]
On this contour,
\[
	\sigma\cos\bigl((t-\tau)(1+\delta)\log\tau\bigr) - (t-\tau)\sin\bigl((t-\tau)(1+\delta)\log\tau\bigr) \le t_{\abs{m}+1}^{-}-\tau \ll \biggl(\frac{\log\log\tau}{\log\tau}\biggr)^{1/3},
\]
since the argument of $\cos$ belongs to $ [\pi/2, 3\pi/2] + 2\pi\Z$. Therefore, by \eqref{eq: sigma_{m}^{pm}},
\begin{align*}
	\Re \frac{1}{2}\frac{\tau^{1-(1+\delta)s}}{s-\I\tau} &\le \frac{1}{2}\frac{\tau^{1-(1+\delta)\sigma}}{\sigma^{2}+(t-\tau)^{2}}(t^{-}_{\abs{m}+1}-\tau) \\
		&\ll \tau^{1-(1+\delta)\sigma}\biggl(\frac{\log\log x}{\log x}\biggr)^{1/6} \ll \biggl(\frac{\log x}{\log\log x}\biggr)^{1/3}.
\end{align*}
Hence, in view of \eqref{eq: sigma_{m}^{pm}} and using Lemma \ref{lem: bound zeta on GHL} to bound the series $\sideset{}{'}\sum$, we see that the integrand of \eqref{eq: Perron integral} over $\Upsilon_{m}$ is
\[
	\ll x^{2}\exp\Biggl( -2\sqrt{2}\sqrt{\log x\log\log x} - \sqrt{2}\bigl(a+\log2+\log\frac{\pi}{2}\bigr)\sqrt{\frac{\log x}{\log\log x}} + O\biggl(\biggl(\frac{\log x}{\log\log x}\biggr)^{1/3}\biggr)\Biggr).
\]
Summing over $\abs{m}\le c(\log x)^{1/3}(\log\log x)^{2/3}$, we get that $\int_{\cup_{m}\Upsilon_{m}}$ is of lower order than the main contribution given by \eqref{eq: contribution saddle points}. 

\subsection{Returning to the original contour}
Finally we show that we can connect the contour $\cup_{m}\bigl(\Gamma_{m}\cup\Upsilon_{m}\bigr)$ to $\GHL$  (see Figure 2).
 First we go from the endpoints of $\Gamma_{\pm\mmax}$ to the line $\sigma=\sigma_{0}$: set 
\[
	\Delta_{0}^{-}\coloneqq [\sigma_{-\mmax}^{-}+\I T_{1}^{-}, \sigma_{0}+ \I T_{1}^{-}], \quad \Delta_{0}^{+} \coloneqq [\sigma_{\mmax}^{+}+ \I T_{1}^{+}, \sigma_{0}+\I T_{1}^{+}].
\]
On this contour, we have 
\[
	\Re \frac{1}{2}\frac{\tau^{1-(1+\delta)s}}{s-\I\tau} < 0,
\]
which together with Lemma \ref{lem: bound zeta on GHL} implies that $\int_{\Delta_{0}^{\pm}} \ll \exp\bigl((1+\sigma_{0})\log x - \sqrt{2}\sqrt{\log x\log\log x}\bigr)$, which is admissible in view of \eqref{eq: contribution s_{0}}.

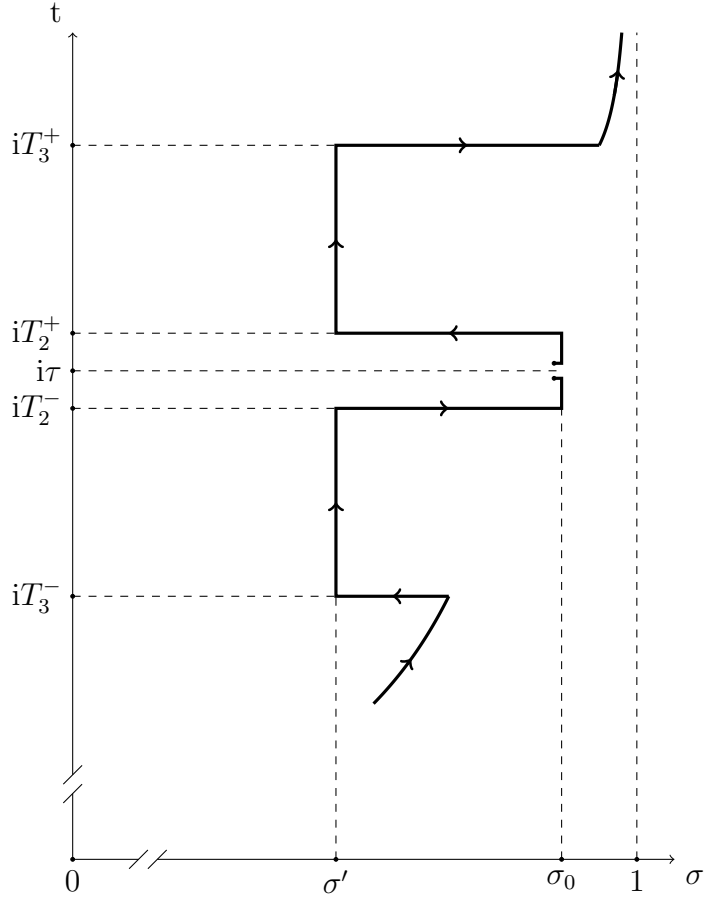
\begin{figure}
\begin{tikzpicture}[scale=0.5] 
	\draw[-] (0,-2) -- (0,-0.25);
	\draw[-] (-0.25,-0.5) -- (0.25,0);
	\draw[-] (-0.25,0) -- (0.25,0.5);
	\draw[->] (0,0.25) -- (0, 20);
	\node[above left] at (0,20) {t};
	\draw[-] (0,-2) -- (1.75,-2);
	\draw[-] (1.5,-2.25) -- (2, -1.75);
	\draw[-] (2,-2.25) -- (2.5,-1.75);
	\draw[->] (2.25,-2) -- (16,-2);
	\node[below right] at (16,-2) {$\sigma$};
	\draw[fill] (0,-2) circle [radius=0.05];
	\node[below] at (0,-2) {$0$};
	\draw[fill] (15,-2) circle [radius=0.05];
	\node[below] at (15,-2) {$1$};
	\draw[fill] (0,5) circle [radius=0.05];
	\node[left] at (0,5) {$\I T_{3}^{-}$};
	\draw[fill] (0,10) circle [radius=0.05];
	\node[left] at (0,10) {$\I T_{2}^{-}$};
	\draw[fill] (0,11) circle [radius=0.05];
	\node[left] at (0,11) {$\I \tau$};
	\draw[fill] (0,12) circle [radius=0.05];
	\node[left] at (0,12) {$\I T_{2}^{+}$};
	\draw[fill] (0,17) circle [radius=0.05];
	\node[left] at (0,17) {$\I T_{3}^{+}$};
	\draw[fill] (13,-2) circle [radius=0.05];
	\node[below] at (13,-2) {$\sigma_{0}$};
	\draw[fill] (7,-2) circle [radius=0.05];
	\node[below] at (7,-2) {$\sigma'$};
	\draw[fill] (12.8, 10.8) circle [radius=0.05];
	\draw[fill] (12.8, 11.2) circle [radius=0.05];
	
	\draw[dashed] (7,-2) -- (7,5);
	\draw[dashed] (13,-2) -- (13,10);
	\draw[dashed] (15,-2) -- (15,20);
	\draw[dashed] (0,5) -- (7,5);
	\draw[dashed] (0,10) -- (7,10);
	\draw[dashed] (0,11) -- (13, 11);
	\draw[dashed] (0,12) -- (7,12);
	\draw[dashed] (0,17) -- (7,17);
		
	\draw[very thick, ->, domain=8:9] plot ({\x}, {50/(15-\x)-5});
	\draw[very thick, domain=8.9:10] plot ({\x}, {50/(15-\x)-5});
	\draw[very thick, ->] (10,5) -- (8.5,5);
	\draw[very thick, ->] (8.6,5) -- (7,5) -- (7,7.5);
	\draw[very thick, ->] (7,7.4) -- (7,10) -- (10,10);
	\draw[very thick] (9.9,10) -- (13,10) -- (13,10.8) -- (12.8,10.8);
	\draw[very thick, ->] (12.8,11.2) -- (13,11.2) -- (13,12) -- (10,12);
	\draw[very thick, ->] (10.1,12) -- (7,12) -- (7,14.5);
	\draw[very thick, ->] (7,14.4) -- (7,17) -- (10.5,17);
	\draw[very thick] (10.4,17) -- (14,17);
	\draw[very thick, ->, domain=14:14.5] plot({\x}, {2/(15-\x)+15});
	\draw[very thick, domain=14.4:14.6] plot({\x}, {2/(15-\x)+15});
\end{tikzpicture}
\caption{Returning to the original contour of Hilberdink and Lapidus.}
\end{figure}
	
Suppose that 
\[
	\abs{t-\tau} \ge \frac{2\pi \mmax}{(1+\delta)\log\tau} \ge c\pi\sqrt{2}\biggl(\frac{\log\log x}{\log x}\biggr)^{1/6}.
\]
Then 
\begin{align*}
	\frac{1}{2}\frac{\tau^{1-(1+\delta)\sigma_{0}}}{\abs{\sigma_{0}+\I(t-\tau)}} 	&= \frac{1}{2}\frac{\tau^{1-(1+\delta)\sigma_{0}}}{\sigma_{0}}\frac{1}{\sqrt{1+\bigl((t-\tau)/\sigma_{0}\bigr)^{2}}} \\
															&\le \frac{1}{2}\frac{\tau^{1-(1+\delta)\sigma_{0}}}{\sigma_{0}}\biggl(1 - c'\biggl(\frac{\log\log x}{\log x}\biggr)^{1/3}\biggr),
\end{align*}
for some constant $c'>0$. By \eqref{eq: power of tau}, we thus have
\[
	\frac{1}{2}\frac{\tau^{1-(1+\delta)\sigma_{0}}}{\abs{\sigma_{0}+\I(t-\tau)}} \le \sqrt{2}\sqrt{\frac{\log x}{\log\log x}} - c''\biggl(\frac{\log x}{\log\log x}\biggr)^{1/6} + O(1),
\]
for some constant $c''>0$. We now set 
\begin{equation}
\label{eq: definition T_{2} with c''} 
	T_{2}^{\pm} \coloneqq \tau \pm \exp\biggl(\frac{c''}{2}\biggl(\frac{\log x}{\log\log x}\biggr)^{1/6}\biggr), \quad
	\Delta_{1}^{\pm} \coloneqq [\sigma_{0} + \I T_{1}^{\pm}, \sigma_{0} + \I T_{2}^{\pm}].
\end{equation}
On $\Delta_{1}^{\pm}$, $t\asymp \tau$ and the series $\sideset{}{'}\sum$ is bounded by Lemma \ref{lem: bound zeta on GHL}. Therefore, we have
\begin{multline*}
	\int_{\Delta_{1}^{\pm}} \ll \exp\biggl(\frac{c''}{2}\biggl(\frac{\log x}{\log\log x}\biggr)^{1/6}\biggr)x^{1+\sigma_{0}} \times \\
	\exp\biggl(-\sqrt{2}\sqrt{\log x\log\log x} + \sqrt{2}\sqrt{\frac{\log x}{\log\log x}} - c''\biggl(\frac{\log x}{\log\log x}\biggr)^{1/6} + O(1)\biggr),
\end{multline*}
which is of lower order than the contribution of the saddle points \eqref{eq: contribution s_{0}}.

Next, set 
\begin{align*}
	\sigma' 	&\coloneqq \frac{1}{1+\delta}\biggl(1-\frac{\sqrt{2}}{2}\frac{c''}{(\log x)^{1/3}(\log\log x)^{2/3}}\biggr) \\
			&= 1 - \frac{\sqrt{2}}{2}\frac{c''}{(\log x)^{1/3}(\log\log x)^{2/3}} + O\biggl( \sqrt{\frac{\log\log x}{\log x}}\biggr).
\end{align*}
We point out that $\sigma'<\sigma_{0}$ (for $x$ sufficiently large).
Then, for $t\ge T_{2}^{+}$ or $t\le T_{2}^{-}$, 
\begin{equation}
\label{eq: bound tau^{...} away from tau}
	\frac{1}{2}\frac{\tau^{1-(1+\delta)\sigma'}}{\abs{\sigma' + \I(t-\tau)}} \ll \exp\biggl( -\frac{c''}{2}\biggl(\frac{\log x}{\log\log x}\biggr)^{1/6} + \frac{c''}{2}\biggl(\frac{\log x}{\log\log x}\biggr)^{1/6}\biggr) = 1.
\end{equation}
Set $\Delta_{2}^{\pm} \coloneqq [\sigma_{0}+ \I T_{2}^{\pm}, \sigma' + \I T_{2}^{\pm}]$. We obtain that $\int_{\Delta_{2}^{\pm}} \ll x^{1+\sigma_{0}}\exp\bigl(-\sqrt{2}\sqrt{\log x\log\log x}\bigr)$,
which is again admissible, by comparing with \eqref{eq: contribution s_{0}}.

Consider now $T_{3}^{+} \coloneqq \tau^{5}$, $T_{3}^{-}\coloneqq \tau^{1/5}$, and $\Delta_{3}^{\pm} \coloneqq [\sigma' + \I T_{2}^{\pm}, \sigma' + \I T_{3}^{\pm}]$. We get (again bounding $\sideset{}{'}\sum$ using Lemma \ref{lem: bound zeta on GHL} and using \eqref{eq: bound tau^{...} away from tau})
\[
	\int_{\Delta_{3}^{\pm}} \ll x^{2}\exp\biggl(-\frac{\sqrt{2}}{2}c''\biggl(\frac{\log x}{\log\log x}\biggr)^{2/3} + O(\sqrt{\log x\log\log x})\biggr),
\]
which is negligible.

Next we move to the contour $\GHL$: set $\Delta_{4}^{\pm} \coloneqq [\sigma' + \I T_{3}^{\pm}, 1-\log\log T_{3}^{\pm}/\log T_{3}^{\pm} + \I T_{3}^{\pm}]$. Now 
\[
	1 - \frac{\log\log \tau^{1/5}}{\log\tau^{1/5}} \le 1- \frac{9}{4}\sqrt{2}\sqrt{\frac{\log\log x}{\log x}}
\]
say, so again by \eqref{eq: bound tau^{...} away from tau} and Lemma \ref{lem: bound zeta on GHL} $\int_{\Delta_{4}^{-}}\ll x^{2}\exp\bigl(-\frac{9}{4}\sqrt{2}\sqrt{\log x\log\log x}\bigr)$, which is admissible with respect to \eqref{eq: contribution saddle points}. Also,
\[
	\frac{1}{\tau^{5}} = \exp\bigl(-\frac{5\sqrt{2}}{2}\sqrt{\log x\log\log x}\bigr),
\]
so $\int_{\Delta_{4}^{+}}$ is admissible as well.

Finally, set $\GHL^{-}\coloneqq \{\, s\in \GHL: t\le \tau^{1/5}\, \}$, $\GHL^{+}\coloneqq \{\, s\in \GHL: t\ge \tau^{5}\,\}$. By Lemma \ref{lem: bound zeta on GHL}, the series $\sum_{k}$ is bounded on these contours. We get\footnote{The part of the integral for $t<\e^{\e}$ is $\ll x^{2-1/\e}$.} 
\begin{align*}
	\int_{\GHL^{-}} 	&\ll x^{2}\int_{\e}^{(\log\tau)/5}\exp\biggl(-\frac{\log u}{u}\log x - u\biggr)\dif u \ll x^{2}\exp\biggl(-\frac{\log\log\tau^{1/5}}{\log\tau^{1/5}}\log x\biggr) \\
				&\ll x^{2}\exp\biggl(-\frac{9}{4}\sqrt{2}\sqrt{\log x\log\log x}\biggr),\\
	\int_{\GHL^{+}} 	&\ll x^{2}\int_{5\log\tau}^{\infty}\exp\biggl(-\frac{\log u}{u}\log x - u\biggr)\dif u \ll x^{2}\exp(-5\log\tau) \\
				&\ll x^{2}\exp\biggl(-\frac{5}{2}\sqrt{2}\sqrt{\log x\log\log x}\biggr),
\end{align*}
and these integrals are therefore also negligible.

\section{Conclusion of analysis of continuous example: proof of Lemma \ref{lem: technical properties sequences}} \label{section: conclusion proof lemma}
The results of the previous two sections now yield the relation \eqref{eq: omega for int N}, which as we have already remarked at the end of Section \ref{sec: Setup and overview} suffices to conclude the proof of Theorem \ref{th: optimality HL}. All of this only remains true provided that Lemma \ref{lem: technical properties sequences} holds. Let us now prove Lemma \ref{lem: technical properties sequences}. 

\begin{proof}[Proof of Lemma \ref{lem: technical properties sequences}]
We will show that for any $T$ one can find $\tau_{k}$,  $a_{k}$, and $\nu_{k}$ which obey $\tau_{k}>T$ and the properties \ref{Property (b)}-\ref{Property (d)}. This will allow us to inductively define the sequences for which also \ref{Property (a)}  holds, by selecting $T\coloneqq (2\tau_{k})^{5}$ in step $k+1$.

Set $\alpha\coloneqq \log 6 + 1/2$, and pick a number $\xi_{k} > \e^{T^{2}}$ such that 
\begin{multline*}
	\frac{\log\xi_{k}}{\frac{1}{\sqrt{2}}\sqrt{\log\xi_{k}\log\log\xi_{k}} + \frac{1}{2}\log\log\xi_{k} + \frac{1}{2}\log\log\log\xi_{k} + \alpha -\frac{\log2}{2}} \times \\
		\Biggl(1-\frac{1+\sqrt{2}\sqrt{\frac{\log\log\xi_{k}}{\log\xi_{k}}}}{\frac{1}{\sqrt{2}}\sqrt{\log\xi_{k}\log\log\xi_{k}} + \frac{1}{2}\log\log\xi_{k} + \frac{1}{2}\log\log\log\xi_{k} + \alpha -\frac{\log2}{2}}\Biggr) \in \Z
\end{multline*}
(corresponding to property \ref{Property (d)}). This is possible since this function is continuous and tends to $\infty$ as $\xi_{k}\to\infty$. Next, define $x_{k}\coloneqq \exp(\log\xi_{k}+\eps_k)$, with $\eps_k$ the smallest positive number such that 
\[
	\log x_{k}\exp\biggl(\frac{1}{\sqrt{2}}\sqrt{\log x_{k}\log\log x_{k}}\biggr) \in \begin{cases} 2\pi\Z \qquad & \mbox{ if } k \mbox{ is even}
	\\
\pi +2\pi\Z \qquad & \mbox{ if } k \mbox{ is odd}.
	\end{cases}
\]
Applying the mean value theorem to the function $y\exp\bigl(\frac{1}{\sqrt{2}}\sqrt{y\log y}\bigr)$, we see that
\[
	\eps_k \ll \frac{1}{\sqrt{\log\xi_{k}\log\log\xi_{k}}\exp\bigl(\frac{1}{\sqrt{2}}\sqrt{\log\xi_{k}\log\log\xi_{k}}\bigr)}.
\]
Hence, by replacing $\log\xi_{k}$ by $\log x_{k}= \log\xi_{k} + \eps_k$, 
we introduce an error in the condition for \ref{Property (d)} of order
\begin{align*}
	\ll \eps_k \biggl(\sqrt{\frac{y}{\log y}}\biggr)'\biggr\rvert_{y=\log\xi_{k}} 	&\ll \frac{1}{\log\xi_{k}\log\log\xi_{k}\exp\bigl(\frac{1}{\sqrt{2}}\sqrt{\log\xi_{k}\log\log\xi_{k}}\bigr)} \\
												&\asymp  \frac{1}{\log x_{k}\log\log x_{k}\exp\bigl(\frac{1}{\sqrt{2}}\sqrt{\log x_{k}\log\log x_{k}}\bigr)},
\end{align*}
which is admissible.

Next, set $a_{k}=\alpha+\eta_k$, with $\eta_k$ the smallest positive number such that 
\[
	\exp\biggl(\sqrt{\frac{\log x_{k}\log\log x_{k}}{2}}\biggr)\biggl(\sqrt{\frac{\log x_{k}\log\log x_{k}}{2}}+ \frac{1}{2}\log\log x_{k} + \frac{1}{2}\log\log\log x_{k}-\frac{\log2}{2}+\alpha+\eta_k\biggr)
\]	
belongs to $2\pi\Z$, corresponding to the first requirement of property \ref{Property (b)}. Then
\[
	\eta_k \ll \exp\biggl(-\frac{1}{\sqrt{2}}\sqrt{\log x_{k}\log\log x_{k}}\biggr),
\]
and replacing $\alpha$ by $a_{k} = \alpha+\eta_k$, 
the newly introduced error in the condition
 for \ref{Property (d)} is
\[
	\ll \frac{1}{\log\log x_{k}\exp\bigl(\frac{1}{\sqrt{2}}\sqrt{\log x_{k}\log\log x_{k}}\bigr)},
\]
which is admissible. Finally, we set $\tau_{k} = \exp\bigl(\frac{1}{\sqrt{2}}\sqrt{\log x_{k}\log\log x_{k}}\bigr)$, and we can choose a value for $\nu_{k}$ between 2 and 3 to satisfy the second part of property \ref{Property (b)}. All the properties are now fulfilled.
\end{proof}

\section{Discretization: proof of Theorem \ref{th: discrete optimality HL} }
\label{section discretization}
This last section is devoted to completing the proof of Theorem \ref{th: discrete optimality HL}. We will apply the probabilistic approach of Diamond, Montgomery, and Vorhauer \cite{DiamondMontgomeryVorhauer} and complement it with a new procedure of adding finitely many well-chosen primes. This allows us to obtain a suitable random approximation to the continuous prime measure $\dif \Pi_{C}$ that we have been studying in the previous sections and will enable us to select a random discrete Beurling prime number system having the desired  properties  \eqref{eq: discrete prime approximation} and \eqref{eq: discrete integer approximation}.

Let $1=v_{0}<v_{1}<v_{2}<\dotso$ be a fixed sequence of real numbers tending slowly to $\infty$ (how slowly will be specified later), and set 
\[
	q_{j} \coloneqq \int_{v_{j-1}}^{v_{j}} \dif \Pi_{C}(u).
\]
We will include the number $v_{j}$ as a prime in our discrete prime number system with probability $q_{j}$, where  as our first requirement on the $v_j$ we ask that they increase sufficiently slowly such that all $q_j < 1$. To make this precise, let $(X_{j})_{j>0}$ be a sequence of independent Bernoulli variables with parameters $q_{j}$ of success on a fixed probability space. Given a point $\omega$ in the probability space, denote by $\MP(\omega)$ the set of those $v_{j}$ for which $X_{j}(\omega) = 1$. The idea is to show that the probability that a prime number system $\MP(\omega)$ satisfies the bounds we need is nonzero.

Denote
\[
	S(y;t)=S_{\omega}(y;t) \coloneqq \int_{1}^{y^{+}}u^{-\I t}\dif\pi(u), \quad S_{C}(y;t) \coloneqq \int_{1}^{y}u^{-\I t}\dif \Pi_{C}(u),
\]
where $\pi=\pi_{\omega}$ is the prime counting function of the discrete system $\MP(\omega)$. We require the following bounds:
\begin{enumerate}[label = (\Alph*)]
	\item $\pi(y) = \Pi_{C}(y) + O(\sqrt{y})$ for $y$ sufficiently large;\label{Bound (A)}
	\item $S(y;t) = S_{C}(y;t) + O(\sqrt{y\log\abs{t}})$ uniformly for $y$ and $\abs{t}$ sufficiently large;\label{Bound (B)}
	\item $S(y;t) = S_{C}(y;t) + O(\sqrt{y}(\log\tau_{k})^{1/4})$ uniformly in the range 
	\[ 
		\abs{t-\tau_{k}} \le \exp\biggl(\frac{c''}{2}\biggl(\frac{\log x_{k}}{\log\log x_{k}}\biggr)^{1/6}\biggr),
	\]
	for $y$ and $k$ sufficiently large. The constant $c''$ is the same constant as the one appearing in \eqref{eq: definition T_{2} with c''}. \label{Bound (C)}
\end{enumerate}

\begin{lemma}
\label{lem: probabilistic lemma}
For every $\eps>0$, there exist $Y_{\eps}$, $T_{\eps}$, and $K_{\eps}$ such that the probability that a prime number system $\MP(\omega)$ satisfies the bounds \ref{Bound (A)}-\ref{Bound (C)} with $y\ge Y_{\eps}$, $\abs{t}\ge T_{\eps}$, and $k\ge K_{\eps}$ is at least $1-\eps$.
\end{lemma}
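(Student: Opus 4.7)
The plan is to follow the Diamond-Montgomery-Vorhauer probabilistic scheme: split the deviation $S(y;t)-S_C(y;t)$ into a centered random part plus a small deterministic discretization error, apply a Bernstein-type concentration inequality at each fixed pair $(y,t)$, and then pass to a uniform statement via a Lipschitz-in-$t$ estimate together with a union bound on a suitable discrete net. Concretely, write
\[
 S(y;t) - S_C(y;t) = Z(y;t) + D(y;t), \qquad Z(y;t) \coloneqq \sum_{v_j \le y}(X_j - q_j)\,v_j^{-\I t},
\]
where $D(y;t)$ is the deterministic error of approximating $\dif\Pi_C$ by the atomic measure $\sum_j q_j \delta_{v_j}$. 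The sequence $(v_j)$ will be chosen with spacing slow enough that $D(y;t)$ stays within each target: for $|t|$ polynomial in $y$ the Taylor bound $|v_j^{-\I t}-u^{-\I t}| \le |t|(v_j-v_{j-1})/v_{j-1}$ controls $D$, while for $|t|$ much larger than $y$ the trivial bound $|D(y;t)| \le 2\Pi_C(y) \ll y/\log y$ already lies below $\sqrt{y\log|t|}$. All three bounds therefore reduce to suitable upper estimates on $|Z(y;t)|$.

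The summands $(X_j - q_j)\,v_j^{-\I t}$ are independent, bounded in modulus by $1$, with variances summing to at most $\Pi_C(y) \ll y/\log y$. Bernstein's inequality, applied to real and imaginary parts, gives for each fixed $(y,t)$ and each $\lambda>0$
\[
 P\bigl(|Z(y;t)|>\lambda\bigr) \le 4\exp\!\left(-\frac{c\lambda^{2}}{y/\log y + \lambda}\right).
\]
Substituting $\lambda = C\sqrt{y}$, $\lambda = C\sqrt{y\log|t|}$, and $\lambda = C\sqrt{y}(\log\tau_k)^{1/4}$ yields pointwise failure probabilities of orders $\exp(-c'C^{2}\log y)$, $\exp(-c'C^{2}\log y\log|t|)$, and $\exp(-c'C^{2}(\log\tau_k)^{1/2}\log y)$ respectively, which are exactly the decay rates needed for bounds \ref{Bound (A)}, \ref{Bound (B)}, and \ref{Bound (C)}.

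To upgrade these pointwise estimates to uniform ones, use the deterministic Lipschitz bound $|\partial_t Z(y;t)| \le \log y \cdot \#\{v_j \le y\} \ll y\log y$ together with the observation that $Z(\cdot\,;t)$ is a step function with jumps only at $y = v_j$, so it suffices to test $y \in \{v_j\}$. Introduce a net in $t$ with mesh $h(y) \asymp 1/(\sqrt{y}\log y)$ for \ref{Bound (A)}-\ref{Bound (B)} and $h(y) \asymp (\log\tau_k)^{1/4}/(\sqrt{y}\log y)$ inside the narrow window of \ref{Bound (C)}; then the Lipschitz variation within each cell falls below the target error. The union bound over this net gives a total failure probability of order
\[
 \sum_{v_j\ge Y_\eps} v_j^{-c'C^{2}} + \sum_{\substack{v_j\ge Y_\eps\\|t|\ge T_\eps}} v_j^{3/2}\log v_j\,\e^{-c'C^{2}\log v_j\log|t|} + \sum_{\substack{v_j\ge Y_\eps\\ k\ge K_\eps}} \frac{L_k\sqrt{v_j}\log v_j}{(\log\tau_k)^{1/4}}\,\e^{-c'C^{2}(\log\tau_k)^{1/2}\log v_j},
\]
where $L_k = \exp\bigl((c''/2)(\log x_k/\log\log x_k)^{1/6}\bigr)$ is the window length appearing in \ref{Bound (C)}. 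For $C$ chosen sufficiently large in terms of $\eps$, each of the three sums converges and can be forced below $\eps/3$ by enlarging $Y_\eps$, $T_\eps$, $K_\eps$ in turn.

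The main obstacle is bound \ref{Bound (C)}: the window length $L_k$ is sub-exponential in $(\log\tau_k)^{1/2}$, which is exactly what lets the Bernstein exponent $(\log\tau_k)^{1/2}\log v_j$ absorb both $\log L_k$ and the logarithm of the net cardinality, and thereby yield a convergent sum in $k$. Had the target exponent in \ref{Bound (C)} been weaker than $(\log\tau_k)^{1/4}$, this balance would break, and the union bound would fail. A secondary technicality is the calibration of $(v_j)$: it must be slow enough to keep $D(y;t)$ below all three target errors uniformly, yet not so slow that $\Pi_C(y) = \sum q_j$ ceases to be of its natural order $y/\log y$.
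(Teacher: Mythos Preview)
Your strategy---centered random part plus deterministic error, a Bernstein/Kolmogorov concentration bound pointwise, then a net and a union bound---is indeed the Diamond--Montgomery--Vorhauer scheme the paper follows. The gap is in the $y$-net.

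You take the $y$-net to be the jump set $\{v_j\}$. But the two requirements on the spacing of $(v_j)$ are in conflict. Controlling $D(y;t)$ when $\abs{t}\asymp\tau_k$ is enormous compared to $y$ forces $(v_j)$ to increase extremely slowly---the paper uses $v_j=(\log j)^{1/4}$---and then the number of indices $j$ with $v_j\le y$ is of order $\exp(y^{4})$. With that choice your three displayed sums diverge: already the first one becomes $\sum_{j}v_j^{-c'C^{2}}=\sum_j(\log j)^{-c'C^{2}/4}=\infty$ for every $C$. If instead you space $(v_j)$ coarsely enough that $\#\{v_j\le y\}\ll y$ (which your ``deterministic Lipschitz bound'' $\log y\cdot\#\{v_j\le y\}\ll y\log y$ tacitly assumes), then for bound~\ref{Bound (C)} neither of your two estimates for $D$ suffices in the full range of $y$: the Taylor bound is of order $\abs{t}\asymp\tau_k$, while the trivial bound $\abs{D}\le 2\Pi_C(y)$ exceeds $\sqrt{y}(\log\tau_k)^{1/4}$ once $y$ is larger than about $\sqrt{\log\tau_k}\,(\log\log\tau_k)^2$. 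So no single calibration of $(v_j)$ makes your scheme go through as written; this is not a ``secondary technicality''.

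The paper's resolution is to decouple the $y$-net from the sequence $(v_j)$. One keeps $(v_j)$ very slow (so that $\abs{E(S(y;t))-S_C(y;t)}\le\sqrt{y}$ holds, as in \cite{DiamondMontgomeryVorhauer}), but tests $y$ only at the integers $m$ and passes to arbitrary $y$ via
\[
\abs{y_1-y_2}\le 1\ \Longrightarrow\ S(y_1;t)=S(y_2;t)+O(\sqrt{y_1}),
\]
which is available precisely because bound~\ref{Bound (A)} has already been established and one may condition on it. For the $t$-variable the paper likewise tests at integers and uses the identity
\[
S(y;t_1)=y^{\I(t_2-t_1)}S(y;t_2)-\I(t_2-t_1)\int_1^y S(u;t_2)\,u^{\I(t_2-t_1)-1}\dif u
\]
(applied to both $S$ and $S_C$) to reach non-integer $t$, rather than a Lipschitz bound. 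Incidentally, the correct random Lipschitz estimate is $\abs{\partial_t Z(y;t)}\le\log y\,\bigl(\pi(y)+\Pi_C(y)\bigr)\ll y$ conditional on~\ref{Bound (A)}, not $\log y\cdot\#\{v_j\le y\}$; with slow $(v_j)$ the latter quantity is astronomically large.
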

We will only prove the lemma for the bounds \ref{Bound (C)}. The proof of the validity of the bounds \ref{Bound (A)} and \ref{Bound (B)} is identical to that of \cite[Lemma 9]{DiamondMontgomeryVorhauer}. In fact, the only assumption that is needed in the proof of \cite[Lemma 9]{DiamondMontgomeryVorhauer} is that the measure $\dif \Pi_{C}$ satisfies
\[ 
	\frac{\dif u}{\log (2u)} \ll \dif \Pi_{C}(u) \le \frac{2\dif u}{\log 2u},
\] 
which of course holds in our case as well. We will employ the following inequality, which follows from an equality of Kolmogorov for sums of independent random variables (see \cite[Chapter V]{Loeve} or \cite[Lemma 8, p.~17]{DiamondMontgomeryVorhauer}). Let $X_{j}$ be independent Bernoulli variables with parameter $q_{j}$, and suppose that $r_{j}$ are real numbers with $\abs{r_{j}}\le 1$ ($j=1,\dotsc, J$). Set $X=\sum_{j=1}^{J}r_{j}X_{j}$. If 
\begin{equation}
\label{eq: condition v}
	0\le v\le 2\sum_{j=1}^{J}q_{j}(1-q_{j}), 
\end{equation}
then
\begin{equation}
\label{eq: probabilistic inequality}
	P(X\geq E(X) + v) \le \exp\biggl(\frac{-v^{2}}{4\sum_{j=1}^{J}q_{j}(1-q_{j})}\biggr).
\end{equation}
\begin{proof}[Proof of Lemma \ref{lem: probabilistic lemma}] As previously indicated, we only show the bound \ref{Bound (C)}, and we will assume that the bound \ref{Bound (A)} holds\footnote{If $P(A^{c}) \le \eps$, we can bound the probability of an event $D$ as $P(D) = P(D | A)P(A) + P(D | A^{c})P(A^{c}) \le P(D | A)P(A) + \eps$.}. By the trivial estimates $S(y;t), S_{C}(y;t) \ll y/\log y$, we may assume that $y \ge C\sqrt{\log\tau_{k}}(\log\log\tau_{k})^{2}$ for some fixed but arbitrarily large constant $C>0$. We apply the inequality \eqref{eq: probabilistic inequality} with $r_{k} = \cos(t\log v_{k})$ and $v = \sqrt{y}(\log\tau_{k})^{1/4}$. Let $J$ be such that $v_{J} \le y<v_{J+1}$. Using that (provided that $q_{j}\le 1/2$)
\[
	\frac{1}{2}\Pi_{C}(v_{J}) \le \sum_{j=1}^{J}q_{j}(1-q_{j}) \le \Pi_{C}(v_{J}), 
\]
we see that \eqref{eq: condition v} holds, so that by \eqref{eq: probabilistic inequality},
\[
	P(\Re S(y;t) \ge E(\Re S(y;t)) + \sqrt{y}(\log\tau_{k})^{1/4}) \le \exp\biggl(-\frac{y\sqrt{\log\tau_{k}}}{4\Pi_{C}(y)}\biggr) \le \exp\biggl(-\frac{1}{8}\log y\sqrt{\log\tau_{k}}\biggr).
\]
The fact that in the range 
\[
	\abs{t-\tau_{k}} \le \exp\biggl(\frac{c''}{2}\biggl(\frac{\log x_{k}}{\log\log x_{k}}\biggr)^{1/6}\biggr) \quad \mbox{and} \quad y \ge C\sqrt{\log\tau_{k}}(\log\log\tau_{k})^{2}
\]
$E(S(y;t))$ is close to $S_{C}(y;t)$ can be proven in exactly the same way as in \cite[pp.~21--22]{DiamondMontgomeryVorhauer}, although we need a different choice for the sequence $v_{j}$. The reader may check that the choice $v_{j} = (\log j)^{1/4}$ ($j\ge j_{0}$) is adequate for obtaining $\abs{E(S(y;t)) - S_{C}(y;t)} \le \sqrt{y}$ in the given ranges (and also works fine for the proof of the bounds \ref{Bound (A)} and \ref{Bound (B)}). Hence,
\[
	P\bigl(\Re S(y;t) \ge \Re S_{C}(y;t) + 2\sqrt{y}(\log\tau_{k})^{1/4}\bigr) \le \exp\biggl(-\frac{1}{8}\log y\sqrt{\log\tau_{k}}\biggr).
\]
Applying the same argument for $r_{j} = -\cos(t\log v_{j})$ and $r_{j} = \pm\sin(t\log v_{j})$ gives that 
\[
	P\bigl(\,\abs{S(y;t) - S_{C}(y;t)} \ge 4\sqrt{y}(\log\tau_{k})^{1/4}\bigr) \le 4\exp\biggl(-\frac{1}{8}\log y\sqrt{\log\tau_{k}}\biggr).
\] 
Let $C_{mk}$ denote the event $\abs{S(m;n) - S_{C}(m;n)} \ge 4\sqrt{m}(\log\tau_{k})^{1/4}$ for some integer $n$ in the range 
\[
	\abs{n-\tau_{k}} \le \exp\biggl(\frac{c''}{2}\biggl(\frac{\log x_{k}}{\log\log x_{k}}\biggr)^{1/6}\biggr).
\]
Using the relation \eqref{eq: xk} between $\tau_{k}$ and $x_{k}$, we get for some constant $c'''$
\begin{align*}
	P(C_{mk}) 	&\ll \sum_{n}\exp\biggl(-\frac{1}{8}\log m\sqrt{\log\tau_{k}}\biggr) \ll \exp\biggl(-\frac{1}{8}\log m\sqrt{\log\tau_{k}} + c'''\biggl(\frac{\log\tau_{k}}{\log\log\tau_{k}}\biggr)^{1/3}\biggr) \\
				&\le \exp\biggl(-\frac{1}{16}\log m\sqrt{\log\tau_{k}}\biggr),
\end{align*}
provided that $k$ is sufficiently large, but otherwise independent of $m$. Thus, by the rapid growth of the sequence $\tau_{k}$ (namely, property \ref{Property (a)} from Section \ref{sec: Setup and overview}),
\begin{align*}
	\sum_{k\ge K}\sum_{m\ge M}P(C_{mk}) 	&\ll \sum_{k\ge K}\sum_{m \ge M}\exp\biggl(-\frac{1}{16}\log m \cdot 5^{\frac{k}{2}}\sqrt{\log\tau_{0}}\biggr) \\
									&\le \sum_{k\ge K}\exp\biggl( -\biggl(\frac{5^{\frac{k}{2}}}{16}\sqrt{\log\tau_{0}} - 1\biggr)\log(M-1)\biggr) < \infty.
\end{align*}
We conclude that 
\[
	\forall\eps>0 \, \exists M_{\eps}, K_{\eps} \in \N\colon P\biggl(\bigcup_{m\ge M_{\eps}}\bigcup_{k\ge K_{\eps}}C_{mk}\biggr) < \eps.
\]
Consider now an event in the complement $\omega \in \bigcap_{m\ge M_{\eps}}\bigcap_{k\ge K_{\eps}}C_{mk}^{c}$. Then the bound \ref{Bound (C)} holds for integral $y$ and $t$. One then uses that $\abs{y_{1}-y_{2}} \le 1 \implies S(y_{1};t)  = S(y_{2};t) + O(\sqrt{y_{1}})$ (since $\pi(y) = \Pi_{C}(y) + O(\sqrt{y})$) and that 
\[
	S(y;t_{1}) = y^{\I(t_{2}-t_{1})}S(y;t_{2})  - \I(t_{2}-t_{1})\int_{1}^{y}S(u;t_{2})u^{\I(t_{2}-t_{1})-1}\dif u
\]
(and similarly for $S_{C}$) to see that the bound \ref{Bound (C)} also holds for non-integral $y$ and $t$.
\end{proof}

We now fix an event $\omega_{0}$ with corresponding prime number system $\mathcal{P}_{0}=\MP(\omega_{0})$ for which the bounds \ref{Bound (A)}-\ref{Bound (C)} hold. Denote the zeta function of this prime number system by $\zeta_{0}$. The bounds \ref{Bound (A)}-\ref{Bound (C)} imply the following bounds for $\zeta_0$.
\begin{lemma}
\label{lem: bounds zeta/zeta_{C}}
The function $\log \zeta_{0}(s) - \log \zeta_{C}(s)$ admits an analytic continuation to $\sigma>1/2$. Uniformly for $\sigma\ge 1/2 +\eps$ we have 
\[
	\log \zeta_{0}(s) = \log \zeta_{C}(s) + O_{\eps}(\sqrt{\log(\,\abs{t}+2)}),
\]
while for $k$ sufficiently large,
\begin{align*}
	\log\zeta_{0}(s) 	&= \log\zeta_{C}(s) + O_{\eps}((\log\tau_{k})^{1/4}), \\
	(\log\zeta_{0}(s))'	&= (\log\zeta_{C}(s))' + O_{\eps}((\log\tau_{k})^{1/4})
\end{align*}
uniformly in the range 
\[	
	\sigma\ge 1/2 + \eps, \quad \abs{t-\tau_{k}} \le \exp\biggl(\frac{c''}{2}\biggl(\frac{\log x_{k}}{\log\log x_{k}}\biggr)^{1/6}\biggr).
\]
\end{lemma}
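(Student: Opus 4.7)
\emph{Proof plan.} My plan is to reduce the whole statement to an Abel-summation analysis of the single integral
\[
I(s) \coloneqq \int_{1}^{\infty}x^{-s}\dif(\pi - \Pi_{C})(x).
\]
Starting from the Euler product expansion
\[
\log\zeta_{0}(s) - \log\zeta_{C}(s) = I(s) + \sum_{j \ge 2}\frac{1}{j}\int_{1}^{\infty}x^{-js}\dif\pi(x) \qquad (\sigma > 1),
\]
I would first discard the tail over $j \ge 2$: since $j\sigma \ge 1 + 2\eps$ on $\sigma \ge 1/2 + \eps$ and $\pi(x) \ll x/\log x$ by \ref{Bound (A)}, the tail and (term-by-term) its $s$-derivative define analytic functions of size $O_{\eps}(1)$ there.

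For the analytic continuation of $I$ to $\sigma > 1/2$, Abel summation yields $I(s) = s\int_{1}^{\infty}x^{-s-1}(\pi(x) - \Pi_{C}(x))\dif x$, which converges absolutely and analytically on that half-plane by \ref{Bound (A)}. For the quantitative bounds I would instead bring in the oscillatory function $F(y;t) \coloneqq S(y;t) - S_{C}(y;t)$, whose Stieltjes differential satisfies $\dif F(y;t) = y^{-\I t}\dif(\pi - \Pi_{C})(y)$. Integration by parts then gives
\[
I(s) = \int_{1}^{\infty}x^{-\sigma}\dif F(x;t) = \sigma\int_{1}^{\infty}x^{-\sigma-1}F(x;t)\dif x,
\]
the boundary contributions vanishing because $F(1;t) = 0$ and the assumed bounds on $F$ force $x^{-\sigma}F(x;t) \to 0$ as $x \to \infty$ for $\sigma > 1/2$. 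Substituting \ref{Bound (B)} then yields $\abs{I(s)} \ll_{\eps}\sqrt{\log(\abs{t}+2)}$ uniformly on $\sigma \ge 1/2 + \eps$, while the sharper \ref{Bound (C)} yields $\abs{I(s)} \ll_{\eps}(\log\tau_{k})^{1/4}$ on the $t$-window around $\tau_{k}$.

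For the derivative statement, differentiation under the integral introduces an additional factor $-\log x$; applying Stieltjes integration by parts once more, now with $g(x) = x^{-\sigma}\log x$, converts this into
\[
(\log\zeta_{0})'(s) - (\log\zeta_{C})'(s) = \int_{1}^{\infty}x^{-\sigma-1}(1 - \sigma\log x)F(x;t)\dif x + O_{\eps}(1),
\]
and \ref{Bound (C)} together with the convergence of $\int_{1}^{\infty}x^{-\sigma - 1/2}(1+\log x)\dif x$ for $\sigma \ge 1/2 + \eps$ again delivers the advertised $O_{\eps}((\log\tau_{k})^{1/4})$. The only point requiring care is the vanishing of the boundary terms at infinity, but this matches exactly the $\sqrt{y}$-type cancellation that Lemma \ref{lem: probabilistic lemma} was engineered to provide, so no serious obstacle is expected beyond routine bookkeeping.
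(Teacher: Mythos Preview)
Your argument is correct and is precisely the standard route the paper has in mind. In fact the paper does not spell out a proof of this lemma at all: it merely states that ``the bounds \ref{Bound (A)}--\ref{Bound (C)} imply the following bounds for $\zeta_{0}$'' and moves on, since the deduction is a routine Abel/Stieltjes summation of exactly the kind you carry out (and which mirrors the corresponding argument in the Diamond--Montgomery--Vorhauer paper that is being adapted here). Your decomposition into the $j=1$ term $I(s)$ plus the harmless tail $\sum_{j\ge2}$, followed by integration by parts against $F(y;t)=S(y;t)-S_{C}(y;t)$ and insertion of \ref{Bound (B)} respectively \ref{Bound (C)}, is the intended computation; the derivative bound via one further integration by parts with the weight $x^{-\sigma}\log x$ is likewise standard. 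The only cosmetic point is that \ref{Bound (B)} and \ref{Bound (C)} are stated for $y$ (and $|t|$) sufficiently large, so for small $y$ one uses the trivial boundedness of $S$ and $S_{C}$, and for bounded $|t|$ one falls back on \ref{Bound (A)} to get $F(y;t)=O_{\eps}(\sqrt{y})$; you allude to this in your last sentence and it is indeed just bookkeeping.
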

One can check that the above bounds are strong enough so that one can repeat the proof of Theorem \ref{th: optimality HL} with $\zeta_{0}$ instead of $\zeta_{C}$ along the same contour, except for the estimation of the contribution from the saddle points. For the argument in Subsection \ref{subsec: The contribution from the saddle points} to go through, we would also require that on the paths of steepest descent  
\begin{equation}
\label{eq: Im zeta discrete system}
d(\Im(\log\zeta(s) - \log\zeta_{C}(s)), 2\pi\Z) < \pi/20,
\end{equation}
 say, but it is unclear whether this holds true for $\zeta=\zeta_0$. We will thus modify the prime number system $\mathcal{P}_{0}$ by adding a finite number of primes such that this bound holds, at least infinitely often for subsequences of $(\tau_{2k})_{k}$ and $(\tau_{2k+1})_{k}$.

Set 
\[
	S_{m}\coloneqq \biggl[ m\frac{\pi}{80} - \frac{\pi}{160}, m\frac{\pi}{80} + \frac{\pi}{160} \biggr) + 2\pi\Z, \quad \text{for } m=0, 1, \dotsc, 159.
\]
By the pigeonhole principle, there is a number $m$ (resp.\ $l$) such that for infinitely many even $k$ (resp.\ odd $k$), 
\[
	\Im(\log\zeta_{0}(1+\I\tau_{k}) - \log\zeta_{C}(1+\I\tau_{k})) \in S_{m} \quad (\text{resp. } S_{l}).
\]
Suppose without loss of generality that $l\le m$. We will add $m$ times the prime $p$ to $\mathcal{P}_{0}$, where $p$ is a well chosen number around $80/\pi$. This changes $\log\zeta_{0}(s)$ by $-m\log(1-p^{-s})$. This additional term and its derivative are $O(1)$, so Lemma \ref{lem: bounds zeta/zeta_{C}} still holds for the new zeta function. 

In $1+\I\tau_{k}$, the imaginary part of $\log \zeta_0$ changes by
\[
	-m\arg(1-p^{-1-\I\tau_{k}}) = -m\arctan\biggl(\frac{p^{-1}\sin(\tau_{k}\log p)}{1-p^{-1}\cos(\tau_{k}\log p)}\biggr).
\] 
Let $\alpha$ be a solution of 
\[
	\frac{\sin \alpha}{1-\frac{\pi}{80}\cos\alpha} = \frac{l}{m}, \quad 0\le \alpha\le \pi/2.
\]
We set 
\[
	p\coloneqq \frac{80}{\pi}\e^{\eps}, \quad \text{where } \eps = \sum_{k=0}^{\infty}\eps_{k}, \quad \eps_{k} \ll \frac{1}{\tau_{k}}.
\]
We define the numbers $\eps_{k}$ inductively: suppose $\eps_{0}, \eps_{1}, \dotsc, \eps_{k-1}$ are defined. Set $\eps_{k}\coloneqq \lambda_{k}/\tau_{k}$, with $\lambda_{k}\in [0, 2\pi)$ the unique number such that 
\[
	\tau_{k}\biggl( \log\frac{80}{\pi} + \eps_{0} + \eps_{1} + \dotsb + \eps_{k}\biggr) \in \frac{\pi}{2} + 2\pi\Z \quad (\text{ resp. } \in\alpha + 2\pi\Z), 
\]
for $k$ even (resp.\ odd). 

Suppose now that $k$ is even (the reasoning for odd $k$ is completely analogous). Using the rapid growth of the sequence $\tau_{k}$, that is, property \ref{Property (a)} from Section \ref{sec: Setup and overview}, we get
\begin{align*}
	\tau_{k}\log p 	&= \tau_{k}\biggl( \log\frac{80}{\pi} + \eps_{0} + \eps_{1} + \dotsb + \eps_{k}\biggr) + O\biggl( \sum_{n=1}^{\infty}\frac{\tau_{k}}{\tau_{k+n}}\biggr) \\
				&= \frac{\pi}{2} + 2\pi M_{k} + O(\tau_{k}^{-4}),
\end{align*} 
for some integer $M_{k}$. Then,
\[
	\sin(\tau_{k}\log p) = 1+O(\tau_{k}^{-8}), \quad \cos(\tau_{k}\log p) = O(\tau_{k}^{-4}), \quad p = \frac{80}{\pi} + O(\tau_{0}^{-1}),
\]
so that
\[
	\frac{p^{-1}\sin(\tau_{k}\log p)}{1-p^{-1}\cos(\tau_{k}\log p)} = \frac{\pi}{80} + O(\tau_{0}^{-1}).
\]
Since $\abs{\arctan u - u} < 3\abs{u}^{3}$ for $\abs{u}<1$, we have (for $\tau_{0}$ sufficiently large) for every even $k$
\[
	\abs{\Im(-m\log(1-p^{-1-\I\tau_{k}})) + m\frac{\pi}{80}} < 6m\biggl(\frac{\pi}{80}\biggr)^{3} < \frac{\pi}{40},
\]
and similarly for every odd $k$
\[
	\abs{\Im(-m\log(1-p^{-1-\I\tau_{k}})) + l\frac{\pi}{80}} < \frac{\pi}{40}.
\]

Set $F(s)\coloneqq \log\zeta_{0}(s) - \log\zeta_{C}(s) - m\log(1-p^{-s})$. Then, for an infinite number of even $k$ and an infinite number of odd $k$, $d(\Im F(1+\I\tau_{k}), 2\pi\Z) < 5\pi/160$. To see that such a (slightly weaker) bound also holds on the corresponding paths of steepest descent, write
\[
	F(s) = F(1+\I\tau_{k}) + \int_{1+\I\tau_{k}}^{s}F'(z)\dif z.
\]
For $s$ on such a path we have
\[
	\abs{s-1-\I\tau_{k}} \ll \frac{\log\log\tau_{k}}{\log\tau_{k}} + \frac{(\log x_{k})^{1/3}(\log\log x_{k})^{2/3}}{\log\tau_{k}},
\] 
and using the bound on the derivatives from Lemma \ref{lem: bounds zeta/zeta_{C}}, we get
\[
	\int_{1+\I\tau_{k}}^{s}F'(z)\dif z \ll \frac{(\log\tau_{k})^{2/3}(\log\log\tau_{k})^{1/3}}{\log\tau_{k}}(\log\tau_{k})^{1/4} \ll \frac{(\log\log\tau_{k})^{1/3}}{(\log\tau_{k})^{1/12}},
\]
so that for an infinite number of even $k$ (resp.\ odd $k$), we have $d(\Im F(s), 2\pi\Z) < \pi/20$ for $s$ on the corresponding paths of steepest descent. 

Therefore, the bound \eqref{eq: Im zeta discrete system} holds for $\zeta(s)=\zeta_{0}(s)(1-p^{-s})^{-m}$, the zeta function corresponding to the number system $\mathcal{P}$ obtained by adding $m$ times the prime $p$ to $\mathcal{P}_{0}$. This allows one to estimate the contribution from the saddle points as in Subsection \ref{subsec: The contribution from the saddle points}, and hence to deduce that \eqref{eq: discrete integer approximation} holds for the counting function of the set of generalized integers associated to $\mathcal{P}$. The prime counting function of this number system obviously satisfies \eqref{eq: discrete prime approximation} as well. This concludes the proof of Theorem \ref{th: discrete optimality HL}.

\appendix

\section{Improving the constant in Theorem \ref{th: HilberdinkLapidus}}
\label{appendix}

We indicate here how Balazard's method \cite{Balazard1999} yields an improvement for the value of $c$ in Theorem \ref{th: HilberdinkLapidus} over the value given by Hilberdink and Lapidus \cite{HilberdinkLapidus2006}.
\begin{theorem} \label{th: ImprovedHL} Suppose that the generalized Riemann prime counting function satisfies
\begin{equation*}
 \Pi(x) = \int_{1}^{x}\frac{1-u^{-1}}{\log u}\dif u + O(x^{\theta}),
\end{equation*}
for some $0 \leq \theta < 1$. Then, there is $\rho > 0$ such that the generalized integer counting function satisfies, for each $c < \sqrt{2(1-\theta)}$,
\begin{equation*}
	N(x) = \rho x + O\left(x\exp\left(-c\sqrt{\log x \log \log x}\right)\right).
\end{equation*}
\end{theorem}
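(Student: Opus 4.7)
The plan is to follow the Hilberdink--Lapidus complex-analytic framework and invoke a Balazard-type mean-value refinement at the step where the constant $c$ is determined.

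First, from the hypothesis $\Pi(x)=P(x)+O(x^{\theta})$ (with $P(x)=\int_{1}^{x}(1-u^{-1})/\log u\,\dif u$), an integration by parts shows that
\[
	g(s) \coloneqq \log\zeta(s)-\log(s/(s-1)) = s\int_{1}^{\infty}(\Pi(u)-P(u))u^{-s-1}\,\dif u
\]
extends analytically to $\sigma>\theta$ and satisfies $|g(s)|\ll |s|/(\sigma-\theta)$ there. Combined with the boundedness of $g$ for $\sigma\ge 1+\eps$, the Phragm\'en--Lindel\"of principle in the strip $\theta<\sigma\le 1+\eps$ yields the convexity estimate $|g(\sigma+\I t)|\ll |t|^{(1-\sigma)/(1-\theta)+o(1)}$ as $|t|\to\infty$, the starting input for the contour analysis.

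Second, apply a smoothed Perron inversion to the primitive $\int_{1}^{x}N(u)\,\dif u$ so that the integrand is absolutely integrable on vertical lines, and move the line of integration from $\sigma=\kappa>1$ to the Hilberdink--Lapidus contour $\GHL$. The residue at the simple pole $s=1$ supplies the main term $\rho x^{2}/2$, with $\rho=\res_{s=1}\zeta(s)$, so the task reduces to showing that the contour integral along $\GHL$ is of order $x^{2}\exp(-c\sqrt{\log x\log\log x})$ for every $c<\sqrt{2(1-\theta)}$. On $\GHL$, where $\sigma = 1-\log\log|t|/\log|t|$, the convexity bound yields $|g(s)|\ll (\log|t|)^{1/(1-\theta)+o(1)}$; the naive pointwise estimate $|\zeta(s)|\le|s/(s-1)|\exp(|g(s)|)$ loses the desired factor and reproduces only the Hilberdink--Lapidus constant. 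It is this exponentiation step that must be improved.

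Third, Balazard's refinement. The idea is to replace the pointwise bound $|\e^{g}|\le\e^{|g|}$ by an effective one exploiting that $g$ is a Mellin--Stieltjes transform of the signed measure $\dif\Pi-\dif P$. Expanding $\e^{g}$ into its Taylor series and using a Plancherel-type identity for the Mellin transforms of the iterated convolutions $(\dif\Pi-\dif P)^{\ast n}$, one bounds the $L^{2}$-mean of each factor on vertical lines by an integral of $|\Pi-P|^{2}$; translated back into a pointwise estimate on $\e^{g}$ via a Carleman or Jensen-type formula on small discs of radius $\asymp 1/\log|t|$ centered on $\GHL$ (where $\e^{g}$ is entire and of finite order), this yields an effective bound on $\log|\zeta(\sigma+\I t)|$ along $\GHL$ that is sharper than the direct one by a factor of roughly $1/\sqrt{2}$. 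Inserting the refined estimate into the Perron integral, a saddle-point balance at $\log|t|\asymp\sqrt{(1-\theta)\log x\log\log x/2}$ produces a contour integral of the required order, and undoing the smoothing recovers the bound for $N(x)$.

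The main obstacle is the implementation of the mean-square / Jensen step in the Beurling setting. Classically such estimates rest on the arithmetic orthogonality of integer Dirichlet series $\sum a_{n}n^{-s}$; here only the regularity $\Pi-P=O(x^{\theta})$ is available, so the Plancherel input must be extracted from the Mellin--Stieltjes representation of $g$, with a careful short-interval smoothing of the signed measure $\dif\Pi-\dif P$ and bookkeeping of the smoothing error. A further delicate point is localizing the Jensen argument on discs meeting $\GHL$ without picking up the pole at $s=1$, and ensuring that the (few) exceptional points where $|\zeta|$ can exceed its mean value contribute negligibly to the Perron integral.
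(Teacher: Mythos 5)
Your overall frame (convexity bound for $\log\zeta$, Perron inversion, shift to $\GHL$) reproduces the Hilberdink--Lapidus argument, but the step on which the entire improvement rests --- your ``third step'' --- is not a proof: you assert that a Plancherel identity for the Mellin transforms of the iterated convolutions, combined with a ``Carleman or Jensen-type formula on small discs'', improves the pointwise bound on $\log|\zeta|$ along $\GHL$ ``by a factor of roughly $1/\sqrt{2}$'', but no mechanism is given that actually produces this gain, and it is precisely here that the constant $\sqrt{2(1-\theta)}$ would have to come from. You have also mislabelled the tool: Balazard's paper is \emph{La version de Diamond de la m\'ethode de l'hyperbole de Dirichlet}; it is an elementary convolution argument, not an $L^{2}$/Jensen refinement of contour estimates. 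There are further unaddressed obstacles in your route: $\e^{g}$ is not entire (there is the pole at $s=1$, and $g$ is only controlled for $\sigma>\theta$), and with only the convexity bound $|g|\ll(\log|t|)^{1/(1-\theta)+o(1)}$ the tail of the Perron integral along $\GHL$ is not even absolutely convergent once $\theta\ge 1/2$, so some truncation or extra input is needed before one can speak of a saddle-point balance.

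The paper's proof is entirely different and elementary. Writing $\dif E=\dif\Pi-\dif P$ with $E(x)=O(x^{\theta})$, it uses $\expm(\dif P)=\delta_{1}+\dif u$ to get
\begin{equation*}
 N(x)=x\sum_{n\ge0}\frac{I_{n}}{n!},\qquad I_{n}=\int_{1^{-}}^{x}\Bigl(\frac{\dif E(u)}{u}\Bigr)\nconv,
\end{equation*}
and proves by induction, via the Dirichlet hyperbola method with cut $y=x^{1/(n+1)}$, that $I_{n}=b^{n}+O\bigl(A^{n}(\log\log x)^{n-1}x^{-(1-\theta)/n}\bigr)$ uniformly in $n$, where $b=\int_{1}^{\infty}u^{-2}E(u)\dif u$. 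The constant then emerges from optimizing the series over $n$: balancing $n!$ (equivalently $n\log n\sim\tfrac{n}{2}\log\log x$) against $x^{(1-\theta)/n}$ gives, by AM--GM, an error of size $x\exp\bigl(-\sqrt{2(1-\theta)\log x\log\log x}+\text{lower order}\bigr)$ at $n_{\max}\sim\sqrt{2(1-\theta)}\sqrt{\log x/\log\log x}$, with $\rho=\e^{b}$. If you want to salvage a complex-analytic proof you would need a genuinely new estimate for $|\zeta|$ (not $|\log\zeta|$) on or near $\GHL$; as written, your key step is a conjecture, so the proposal has a genuine gap.
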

Naturally, when $\theta \geq 1/2$, Theorem \ref{th: ImprovedHL} implies Theorem \ref{th: HilberdinkLapidus} and improves the values of the constant $c$. In its proof, we shall make extensive use of the operational calculus for multiplicative convolution of measures for which we can refer the reader to \cite[Chapter 2]{DiamondZhangbook}. 

\begin{proof} We define $\mathrm{d}P$ as the ``natural" continuous prime measure and $\mathrm{d}E$ as the deviation between $\mathrm{d}\Pi$ and $\mathrm{d}P$, that is,
\begin{equation*}
 \mathrm{d}P = \frac{1-u^{-1}}{\log u} \mathrm{d}u, \qquad \mathrm{d}E = \mathrm{d}\Pi - \mathrm{d}P.
\end{equation*}
By hypothesis, $E(x):= \int^{x}_{1} \mathrm{d}E = O(x^{\theta})$. We recuperate $N$ as
\begin{align*}
 N(x) & = \int^{x}_{1^{-}} \expm (\mathrm{d}\Pi) = \int^{x}_{1^{-}} \expm (\mathrm{d}P) \ast \expm (\mathrm{d}E) =  x \int^{x}_{1^{-}} u^{-1} \expm(\mathrm{d}E(u))\\
& = x \int^{x}_{1^{-}} \expm\left(\frac{\mathrm{d}E(u)}{u}\right) = x \sum_{n = 0}^{\infty} \frac{1}{n!} \int^{x}_{1^{-}} \left(\frac{\mathrm{d}E(u)}{u}\right)\nconv,
\end{align*}
where we have used the identity $\expm (\mathrm{d}P) = \delta_{1} + \mathrm{d}u$, where $\delta_{1}$ denotes the Dirac measure concentrated at $1$. 
We are therefore led to the integrals 
\begin{equation*}
 I_{n} = \int^{x}_{1^{-}} \left(\frac{\mathrm{d}E(u)}{u}\right)\nconv.
\end{equation*}
The first two cases are trivial, $I_{0} = 1$ and $I_{1} = b + O(x^{-(1-\theta)})$, where $b = \int^{\infty}_{1}u^{-2}E(u)\mathrm{d}u$.\par
For larger $n$, we claim that there is an absolute constant $A$ for which
\begin{equation} \label{eq: higherorderconv}
 I_{n} = b^{n} + O\left(\frac{A^{n} (\log \log x)^{n-1}}{x^{(1 -\theta)/n}}\right),
\end{equation}
and where the $O$-constant is independent of $n$. \par
We proceed to show the claim via induction. Employing the Dirichlet hyperbola method we decompose $I_{n+1}$ as $S_{1} + S_{2} - S_{3}$, where $y$ shall be taken as the free variable to be optimized at our will. The first term is
\begin{align*}
 S_{1}  &:= \int^{y}_{1} \frac{\mathrm{d}E(u)}{u} \int^{x/u}_{1} \left(\frac{\mathrm{d}E(v)}{v}\right)\nconv = \int^{y}_{1} \left(b^{n} + O\left(\frac{A^{n}(\log \log x)^{n-1}u^{(1-\theta)/n}}{x^{(1-\theta)/n}}\right)\right)\frac{\mathrm{d}E(u)}{u} \\
& = b^{n+1} + O\left(\frac{b^{n}}{y^{1 - \theta}}\right) + O\left(\frac{A^{n}(\log \log x)^{n-1}}{x^{(1-\theta)/n}}\int^{y}_{1} u^{-1+(1-\theta)/n}|\mathrm{d}E(u)|\right).
\end{align*}
Subsequently, as $|\mathrm{d}E| \leq \mathrm{d}\Pi + \mathrm{d}P = 2 \mathrm{d}P + \mathrm{d}E$, the integral in the final error term can, for large enough $x$, be estimated by
\begin{equation*}
\int^{y}_{1} u^{-1+(1-\theta)/n}|\mathrm{d}E(u)| \leq  y^{(1-\theta)/n}\int^{x}_{1} u^{-1}(2 \mathrm{d}P(u) + \mathrm{d}E(u)) \leq 3 y^{(1-\theta)/n} \log \log x.
\end{equation*}
The second summand is estimated using the representation
\begin{align*}
 S_{2} & := \int^{x/y}_{1} \left(\frac{\mathrm{d}E(v)}{v}\right)\nconv \int^{x/v}_{1} \frac{\mathrm{d}E(u)}{u}  = \int^{x/y}_{1} \left(b + O\left(\frac{v^{1 - \theta}}{x^{1 - \theta}}\right)\right) \left(\frac{\mathrm{d}E(v)}{v}\right)\nconv\\
& = b^{n+1} + O\left(\frac{A^{n} (\log \log x)^{n}y^{(1-\theta)/n}}{x^{(1-\theta)/n}}\right) + O\left(\frac{1}{y^{1-\theta}}\int^{x/y}_{1} \left(\frac{|\mathrm{d}E(v)|}{v}\right)\nconv\right)
\end{align*}
Considering the integral involving the $n$-folded convolution of $|\mathrm{d}E|$, we obtain, for $x$ large enough, 
\begin{align*}
 \int^{x/y}_{1} \left(\frac{|\mathrm{d}E(v)|}{v}\right)\nconv & \leq \left( \int^{x/y}_{1}   \frac{|\mathrm{d}E(v)|}{v} \right)^{n} \leq \left( \int^{x/y}_{1}   \frac{2 \mathrm{d}P(v) + \mathrm{d}E(v)}{v} \right)^{n} \\
& \leq \left(3\log \log x\right)^{n}.
\end{align*}
Finally, $S_{3}$ is bounded using
\begin{equation*}
 S_{3} := \int^{y}_{1} \frac{\mathrm{d}E(u)}{u} \int^{x/y}_{1} \left(\frac{\mathrm{d}E(v)}{v}\right)\nconv = b^{n+1} + O\left(\frac{b^{n}}{y^{1-\theta}}\right) + O\left(\frac{A^{n}(\log \log x)^{n-1}y^{(1-\theta)/n}}{x^{(1-\theta)/n}}\right).
\end{equation*}
Upon choosing $y = x^{1/(n+1)}$ and $A$ sufficiently large such that the extra factor $A$ can absorb all the absolute $O$-constants, the claim \eqref{eq: higherorderconv} follows.\par
Now it remains to analyze 
\begin{equation} \label{eq: sumIn}
 \sum_{n = 0}^{\infty} \frac{I_{n}}{n!} = \e^{b} + O\left(\sum_{n= 1}^{\infty}\frac{A^{n} (\log \log x)^{n-1}}{n!x^{(1-\theta)/n}} \right).
\end{equation}
We now select the $n$, say $n_{\max}$, for which $2^{n}$ times the $n$-th term in the above series reaches its maximum. A few standard calculations allows one to find an approximation for $n_{\max}$,
\begin{equation*}
 n_{\max} = \sqrt{2(1-\theta)}\sqrt{\frac{\log x}{\log \log x}}\left(1 + O\left(\frac{\log \log \log x}{\log \log x}\right)\right),
\end{equation*}
and after inserting this into \eqref{eq: sumIn}, one obtains
\begin{equation*} 
N(x) = \e^{b}x + O\left(x\exp\left(- \sqrt{2(1-\theta)}\sqrt{\log x \log \log x} + O\bigg(\frac{(\log x)^{1/2} \log \log \log x}{(\log \log x)^{1/2}}\bigg)\right)\right). \qedhere
\end{equation*}
\end{proof}


\end{document}